\documentclass[12pt]{amsart}
\usepackage[utf8]{inputenc}
\usepackage[T1]{fontenc}

\usepackage{CormorantGaramond} %%font

\usepackage[margin=1in]{geometry}
\usepackage{graphicx} 
\usepackage{hyphenat} 
\usepackage{amsmath,amsfonts,amssymb,amsthm,epsfig,epstopdf,url,array,mathtools,faktor,paralist, xfrac,mathrsfs,pifont,yfonts,mathabx,leftindex,adjustbox,stmaryrd, mathdots,booktabs,tcolorbox,float,xcolor}
\usepackage{ytableau}
\usepackage{enumitem}
\usepackage{microtype}
\usepackage[colorlinks=true,linkcolor=red!60!black,citecolor=green!60!black]{hyperref}

\usepackage{tikz}
\usepackage{tikz-cd}
\usetikzlibrary{arrows,positioning,scopes,trees,matrix,fit,backgrounds,shapes,fit,calc}

\ytableausetup{smalltableaux,centertableaux}

\theoremstyle{plain}
\newtheorem{theorem}{Theorem}[section]
\newtheorem{prop}[theorem]{Proposition}
\newtheorem{lem}[theorem]{Lemma}
\newtheorem{cor}[theorem]{Corollary}
\newtheorem{conj}[theorem]{Conjecture}
\theoremstyle{definition}
\newtheorem{defn}[theorem]{Definition}
\newtheorem{example}[theorem]{Example}
\newtheorem{remark}[theorem]{Remark}

\newtheorem{thmx}{Theorem}            % counter "thmx"
\newtheorem{propx}[thmx]{Proposition} % share the counter

\newtheorem*{thm*}{Theorem}
\newtheorem*{prop*}{Proposition}
\newtheorem*{ques*}{Question}

\newcommand{\ZZ}{\mathbb{Z}}

\newcommand{\lam}{\lambda}
\newcommand{\Kos}[2]{K_{#1,\,#2}}
\newcommand{\dom}{\trianglerighteq}
\newcommand{\bd}{\mathbf}

\newcommand{\SI}{\operatorname{SI}}

\newcommand{\Gr}{\operatorname{Gr}}

\DeclareMathOperator{\SSYT}{SSYT}

\DeclareMathOperator{\rep}{\mathrm{rep}}
\DeclareMathOperator{\GL}{\mathrm{GL}}
\DeclareMathOperator{\SL}{\mathrm{SL}}

\title{On a conjecture of King-Tollu-Toumazet for rectangular Kostka coefficients}

\author{Deepanshu Prasad}
\address{School of Mathematics and Statistics, Carleton University, Ottawa, Canada}
\email{deepanshu.prasad@gmail.com}

\begin{document}

\begin{abstract}
A conjecture of King, Tollu and Toumazet predicts the behavior of small Littlewood--Richardson
coefficients under stretching: if $c^{\nu}_{\lambda,\mu}=3$, then
$c^{M\nu}_{M\lambda,M\mu}$ equals $2M+1$ or $\binom{M+2}{2}$ for every positive integers $M$. This corresponds to the Hilbert
functions of a conic, or of the projective plane. We prove their conjecture for all rectangular Kostka coefficients $K_{\lam,\bd a}$. We realize them as the weight multiplicities of the homogeneous coordinate ring of the Grassmannian $\Gr(k,n)$, or equivalently the weight spaces of
semi-invariants of the $n$-subspace quiver with dimension vector $\beta=(k,1,1,\dotsc,1)$. For a weight $\sigma$, we show that if $\beta$ is $\sigma$-stable, then the dimension of the corresponding weight space is at least $n-2$ when $k=2$ and at
least $4$ for every Grassmannian other than $\Gr(2,4)$, $\Gr(2,5)$ and $\Gr(3,5)$. These three cases are precisely when the associated moduli space of $\sigma$-stable representations has dimension at most two. Via $\sigma$-stable decomposition, the two alternatives are determined as follows: $2M+1$ for $\Gr(2,4)$, and $\binom{M+2}{2}$ for $\Gr(2,5)$ and $\Gr(3,5)$.
\end{abstract}

\maketitle

\section{Introduction}\label{sec:intro}

Let $\mathbb{K}$ be an algebraically closed field of characteristic zero and $Q$ a finite, connected, acyclic quiver with vertex set $Q_0$ and arrow set $Q_1$. For a dimension vector $\beta\in\mathbb{N}^{\lvert Q_0\rvert}$, the group $\GL_\beta:=\prod_{x\in Q_0}\GL_{\beta(x)}$ acts on the representation space $\rep_\beta(Q)$ by $g\cdot V=(g_{ha}V(a)g_{ta}^{-1})_{a\in Q_1}$, and the ring of \emph{semi-invariants} is
\[
    \SI(Q,\beta)\coloneqq\mathbb{K}[\rep_\beta(Q)]^{\SL_\beta},\qquad \SL_\beta:=\prod_{x\in Q_0}\SL_{\beta(x)}.
\]
Every integral weight $\sigma\in\mathbb{Z}^{\lvert Q_0\rvert}$ defines a multiplicative character $\chi_\sigma$ of $\GL_\beta$, and $\SI(Q,\beta)=\bigoplus_\sigma\SI(Q,\beta)_\sigma$ decomposes into the weight spaces $\SI(Q,\beta)_\sigma=\{f\mid g\cdot f=\chi_\sigma(g)f \text{ for all } g\in\GL_\beta\}$. The occurring weights form the cone of weights $\Sigma(Q,\beta)$ \cite{derksen2000semi}, whose study yielded a proof of the saturation and Fulton conjectures for Littlewood--Richardson coefficients. Moreover, for each weight $\sigma$ the ring $\bigoplus_{M\geq 0}\SI(Q,\beta)_{M\sigma}$ is the homogeneous coordinate ring of King's moduli space of $\sigma$-semistable representations~\cite{King}, so the weight-space dimensions are the Hilbert function of this moduli space.

A recurring theme in the study of Littlewood--Richardson coefficients is their behaviour under scaling the partitions, i.e.\ under replacing a triple
$(\lambda,\mu,\nu)$ of partitions by $(M\lambda,M\mu,M\nu)$ for a positive
integer $M$. Derksen and Weyman showed that for the triple flag quiver $T$ with its standard
dimension vector $\beta$, we have
$c^{\nu}_{\lambda,\mu}=\dim_{\mathbb{K}}\SI(T,\beta)_{\sigma}$, where the weight
$\sigma=\sigma(\lambda,\mu,\nu)$ is determined from the three partitions \cite{derksen2000semi}. Scaling the partitions multiplies the weight by $M$,
\[
    c^{M\nu}_{M\lambda,M\mu}=\dim_{\mathbb{K}}\SI(T,\beta)_{M\sigma},
\]
so each statement below concerns the function
$M\mapsto\dim_{\mathbb{K}}\SI(Q,\beta)_{M\sigma}$, which makes sense for any
quiver $Q$ without oriented cycles, any dimension vector $\beta$ and any
weight $\sigma$. This correspondence allows one to transfer structural results between Littlewood--Richardson coefficients and quiver semi-invariants~\cite{CombQuivDerkWey}.

The first result of this kind is the saturation theorem of Knutson and Tao
\cite{knutson1999honeycomb}: if $c^{M\nu}_{M\lambda,M\mu}>0$ for some
$M$, then $c^{\nu}_{\lambda,\mu}>0$. In \cite{derksen2000semi}, Derksen and Weyman proved it for arbitrary quivers, i.e.\ if $\SI(Q,\beta)_{M\sigma}\neq0$ for some $M\ge1$, then
$\SI(Q,\beta)_{\sigma}\neq0$. Fulton conjectured that if $c^{\nu}_{\lambda,\mu}=1$, then $c^{M\nu}_{M\lambda,M\mu}=1$ for all
$M\ge1$. This was proved by Knutson, Tao and Woodward
\cite{knutson2004honeycomb}, and Belkale later gave a geometric proof \cite{BELKALE2007346}, which carries over to quivers. In particular, if $\dim_{\mathbb{K}}\SI(Q,\beta)_{\sigma}=1$, then
$\dim_{\mathbb{K}}\SI(Q,\beta)_{M\sigma}=1$ for all $M\ge1$ \cite[Appendix]{CombQuivDerkWey}.

King, Tollu and Toumazet conjectured that a coefficient equal to $2$ grows
linearly. More precisely, if $c^{\nu}_{\lambda,\mu}=2$, then $c^{M\nu}_{M\lambda,M\mu}=M+1$
for all $M\ge1$ \cite[Conjecture~3.3]{soton41202}. Thius was proved by Ikenmeyer \cite{Ikenmeyer_2016}, and Sherman proved the quiver version, i.e.\ if
$\dim_{\mathbb{K}}\SI(Q,\beta)_{\sigma}=2$, then
$\dim_{\mathbb{K}}\SI(Q,\beta)_{M\sigma}=M+1$ for all $M\ge1$ \cite{SHERMAN2017487}.

In the $c^{\nu}_{\lambda,\mu}=3$ case, the expected behaviour is no longer uniform.
King, Tollu and Toumazet conjecture that if $c^{\nu}_{\lambda,\mu}=3$, then
$c^{M\nu}_{M\lambda,M\mu}=2M+1$ or
$\binom{M+2}{2}$ for all $M\ge1$ \cite[Conjecture~3.3]{soton41202}. These two alternatives are the Hilbert functions of a conic and of the projective plane. The following conjecture in the quiver setting is proposed by Chindris.

\begin{conj}[King, Tollu, Toumazet; Chindris]
\label{conj:three}
  Let $Q$ be a quiver without oriented cycles. Fix a dimension vector $\beta$, and a weight $\sigma\in\Sigma(Q,\beta)$. If $\dim_{\mathbb{K}}\SI(Q,\beta)_{\sigma}=3$, then $\dim_{\mathbb{K}}\SI(Q,\beta)_{M\sigma}=2M+1$ or
  $\binom{M+2}{2}$ for all positive integers $M$.
\end{conj}
The above conjecture is particularly significant for understanding the structure of semi-invariant rings and, hence, the associated quiver moduli spaces in the sense of King. In \cite{UFDPaqPraWeh} it was shown that the $2$-dimensional weight spaces play an important role in determining the structure of the semi-invariant ring when the maximal $\GL_{\beta}$-orbits in $\rep_{\beta}(Q)$ are of codimension~1. We believe that a similar important role will be played by $3$-dimensional weight spaces when the maximal $\GL_{\beta}$-orbits in $\rep_{\beta}(Q)$ are of codimension~2.

Denote by $\SSYT(\lam,\bd a)$ the
set of semistandard Young tableau (SSYT) of shape $\lam$ and content $\bd a$. Recall that
\[
    \Kos{\lam}{\bd a}\ \coloneqq\ \#\SSYT(\lam,\bd a),
\]
is the \emph{Kostka coefficient}. We call $\Kos{\lam}{\bd a}$ a \emph{rectangular Kostka coefficient} if the partition $\lambda$ gives a Young diagram of rectangular shape. In this article we prove the above Conjecture \ref{conj:three} for all the rectangular Kostka coefficients, which is a special case of Littlewood-Richardson coefficients. More precisely, we consider the $n$-subspace quiver
\[
   Q: \;\;\begin{tikzcd}
        1 & 2 & \substack{\dotsb\\ n\;\text{vertices}} & n-1 & n \\
        && 0 &&
        \arrow[from=1-1,to=2-3]
        \arrow[from=1-2,to=2-3]
        \arrow[from=1-4,to=2-3]
        \arrow[from=1-5,to=2-3]
    \end{tikzcd}
\]
with dimension vector $\beta=\bigl(k,\overbrace{1,1,\dotsc,1}^{n\text{-many}}\bigr)$, where $n\geq k$. By the First and Second Fundamental Theorems of invariant theory, we get that the semi-invariant ring $\SI(Q,\beta)$ is isomorphic to the homogeneous coordinate ring $\mathbb{K}[\Gr(k,n)]$ of the Grassmannian $\Gr(k,n)$ of $k$-dimensional linear subspaces of $n$-dimensional space. In this case we show that

\begin{propx}[see Proposition \ref{prop:dimformula}]
     Fix $n\geq k\ge2$ and a non-negative integer $d$. The dimension of the weight spaces of $\SI(Q,\beta)$ is given by the rectangular Kostka coefficients, i.e.,
    \[
        \dim_{\mathbb{K}}\SI(Q,\beta)_{\sigma}\ =\ \Kos{(d^k)}{\bd a}\quad\text{where}\; \bd a=(a_1,\dotsc,a_n) \;\text{is a content and}\;\sigma=(-d,a_1,\dotsc,a_n).
    \]
\end{propx}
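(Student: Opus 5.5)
We compute $\SI(Q,\beta)_\sigma$ directly from the representation space and reduce to a weight-space computation for $\GL_k$. A point of $\rep_\beta(Q)$ is an $n$-tuple of vectors $v_1,\dotsc,v_n\in\mathbb{K}^k$, equivalently a $k\times n$ matrix $X$ whose $i$-th column is $v_i$. The group $\GL_\beta=\GL_k\times(\mathbb{K}^*)^n$ acts by $(g,t_1,\dotsc,t_n)\cdot X=gX\,\mathrm{diag}(t_1^{-1},\dotsc,t_n^{-1})$.

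\textbf{Step 1: restrict to the torus.} A semi-invariant of weight $\sigma=(-d,a_1,\dotsc,a_n)$ transforms under the torus $(\mathbb{K}^*)^n$ by $\prod t_i^{a_i}$. With the paper's sign conventions for $\chi_\sigma$, this says that $f$ is multihomogeneous of degree $a_i$ in the entries of column $v_i$. If the conventions force the opposite sign, I would simply fix the sign so that the occurring weights have $a_i\ge0$. So $f$ lies in
\[
\textstyle\bigotimes_i S^{a_i}(\mathbb{K}^k)^*\cong \mathbb{K}[X]_{\bd a}.
\]
Under $\GL_k$, the condition is that $f(g^{-1}\cdot X)=\det(g)^{-d}f(X)$, up to the same sign normalization. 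Thus $\SI(Q,\beta)_\sigma$ is the $\det^{d}$-isotypic component (of the appropriate variance) of $\bigotimes_{i=1}^n S^{a_i}(\mathbb{K}^k)$ as a $\GL_k$-module.

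This component is nonzero only if $\sum a_i=kd$: comparing the scalar matrices $g=c\cdot I$ gives $c^{\sum a_i}=c^{kd}$. When $\sum a_i\ne kd$, both sides of the claimed identity vanish, since there is no SSYT of shape $(d^k)$ with content of the wrong size.

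\textbf{Step 2: decompose via Pieri.} Iterating Pieri's rule gives
\[
\textstyle\bigotimes_i S^{a_i}(\mathbb{K}^k)\cong\bigoplus_{\lambda}S_\lambda(\mathbb{K}^k)^{\oplus K_{\lambda,\bd a}},
\]
where $\lambda$ ranges over partitions with at most $k$ rows. The multiplicity of $S_\lambda$ counts chains of horizontal strips of sizes $a_1,\dotsc,a_n$, which are exactly the SSYT of shape $\lambda$ and content $\bd a$. Since $\det^d=S_{(d^k)}(\mathbb{K}^k)$, the $\det^d$-isotypic component has dimension $K_{(d^k),\bd a}$.

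Alternatively, Cauchy's formula $\mathbb{K}[X]\cong\bigoplus_\lambda S_\lambda(\mathbb{K}^k)\otimes S_\lambda(\mathbb{K}^n)$ shows that the $\det^d$-part is $S_{(d^k)}(\mathbb{K}^n)$. Its torus-weight-$\bd a$ space then has dimension $K_{(d^k),\bd a}$ by the combinatorial character formula for Schur modules.

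\textbf{Main obstacle.} I expect the only delicate point to be bookkeeping of signs and duals. The action uses $t_i^{-1}$ on the tail and $g$ on the head, and $\chi_\sigma$ has its own normalization. One must check that the weight $(-d,a_1,\dotsc,a_n)$, rather than its negative, picks out polynomials of degree $a_i$ in $v_i$ transforming by $\det^{d}$. A quick way to confirm is the basic determinant: the maximal minor $\det(v_{i_1},\dotsc,v_{i_k})$ has degree one in each chosen column and transforms by $\det(g)$. Its weight must therefore have entries $1$ at those columns and $-1$ at vertex $0$, which matches the statement with $d=1$. Once this normalization is fixed, both identifications above are standard.
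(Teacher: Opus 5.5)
Your proposal is correct and follows essentially the same route as the paper. The paper uses the arm-by-arm Cauchy decomposition of $\mathbb{K}[\rep_\beta(Q)]$ from Derksen--Weyman, where $\dim V(i)=1$ forces $\lambda^{(i)}=(a_i)$; this is your torus/multihomogeneity step. It then takes the multiplicity of $\mathbb{S}_{(d^k)}V(0)^{*}$ in $\bigotimes_i \mathbb{S}_{(a_i)}V(0)^{*}$, which is $\Kos{(d^k)}{\bd a}$ by Pieri's rule. Your write-up is slightly more complete, since it makes explicit both the Pieri/Young's-rule step and the sign normalization (checked on a maximal minor), which the paper leaves implicit.
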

Due to the above Proposition, we will interchangeably use $\SI^{\,\Gr(k,n)}_{(d;\bd a)}$ and $\SI(Q,\beta)_{\sigma}$. With regard to Conjecture \ref{conj:three}, the relevant range of parameters is $k\geq 2$ and $n\geq k+2$, see Remark \ref{rem:trivialcase}.

\begin{thmx}[see Theorem \ref{thm:KTT}]
    Let $k\ge2$, $n\ge k+2, d\geq 1$, and $\bd a$ be a content for $(k,n,d)$. If $\Kos{(d^k)}{\bd a}=3$, then $\Kos{((Md)^k)}{M\bd a}=\binom{M+2}{2}$ or $2M+1$, for every positive integer $M$.

    In terms of the weight spaces we get that if $\dim_{\mathbb{K}}\SI^{\,\Gr(k,n)}_{(d;\bd a)}=3$, then $\dim_{\mathbb{K}}\SI^{\,\Gr(k,n)}_{(Md;M\bd a)}=\binom{M+2}{2}$ or $2M+1$, for every positive integer $M$.
\end{thmx}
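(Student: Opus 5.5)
The plan follows the strategy sketched in the abstract: reduce to the $\sigma$-stable case by $\sigma$-stable decomposition, bound the weight-space dimension from below in the stable case, and identify the few stable cases with dimension $3$ explicitly.

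\emph{Step 1: reduction to the stable case.} Let $\sigma=(-d,\bd a)$ with $\dim\SI(Q,\beta)_\sigma=3$. By Derksen--Weyman, $\beta$ has a $\sigma$-stable decomposition $\beta=\beta_1^{\oplus m_1}\dotplus\cdots\dotplus\beta_r^{\oplus m_r}$. The weight spaces factor accordingly: $\SI(Q,\beta)_{M\sigma}$ is the tensor product of $S^{m_i}$ of $\SI(Q,\beta_i)_{M\sigma}$ for real Schur roots $\beta_i$, with the usual symmetric-power rule for isotropic or imaginary ones. The dimension vectors $\beta_i$ of the $n$-subspace quiver are of the form $(k',1,\dots,1,0,\dots)$ or simple roots. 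Hence every $\sigma$-stable factor is again a smaller Grassmannian weight space, $\SI^{\Gr(k',n')}$, or a one-dimensional piece. Since $\dim=3$ is a product of such contributions, one of two things happens. Either exactly one factor has dimension $3$ and all others have dimension $1$, which stays $1$ under scaling by the Fulton/Belkale result. Or there is a repeated factor with dimension $2$ in which the $\beta_i$ occurs with multiplicity $2$. In the latter case $S^2$ of a two-dimensional space (times $1$'s) gives $3$, and scaling gives $\dim S^2(\mathbb K^{M+1})$. I would check that this case forces the moduli to be $\mathbb P^1$-like, so the answer is $\binom{M+2}{2}$ or, via the Veronese/conic picture, $2M+1$. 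I would verify this carefully using Sherman's theorem, which says dimension $2$ implies $M+1$. So it suffices to treat $\beta$ itself $\sigma$-stable, possibly with smaller $(k,n)$.

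\emph{Step 2: lower bound in the stable case.} If $\beta$ is $\sigma$-stable for $\Gr(k,n)$, the moduli space $\rep_\beta(Q)^{\sigma\text{-st}}/\GL_\beta$ has dimension $k(n-k)-(n-1)$, since the torus acting on the lines removes $n-1$ dimensions. I would prove $\Kos{(d^k)}{\bd a}\ge n-2$ for $k=2$, and $\ge4$ otherwise except for $\Gr(2,4)$, $\Gr(2,5)$ and $\Gr(3,5)$. Stability translates to the strict inequalities $a_i<d$ together with the strict Horn-type inequalities $\sum_{i\in I}a_i< d\cdot\min(|I|,k)$ for proper subsets. The route I would try is to construct explicit distinct SSYT: start from one tableau and perform local swaps of entries between columns, each allowed by strict slack in the inequalities. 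For $k=2$ these are two-row tableaux, and the count of swaps between consecutive letters gives $n-2$ independent moves. For larger $k$, I would induct on $n$ and $k$, removing a letter and using the inclusions $\Gr(k,n-1)$ and $\Gr(k-1,n-1)$, or the duality $\Gr(k,n)\cong\Gr(n-k,n)$ to reduce to $k\le n/2$. This combinatorial lower bound is the main obstacle. The hardest part is the small boundary cases, such as $\Gr(3,6)$ and $\Gr(2,6)$, where the slack is minimal and the $4$ tableaux must be found by hand.

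\emph{Step 3: the exceptional cases.} Stability plus dimension $3$ thus forces $\Gr(2,4)$, $\Gr(2,5)$ or $\Gr(3,5)\cong\Gr(2,5)$.
\begin{itemize}
\item For $\Gr(2,4)$ the moduli space is $1$-dimensional, a $\mathbb P^1$, and the polarization has degree $2$ when $\dim=3$. So the Hilbert function is $2M+1$. Alternatively, one can count two-row tableaux directly: $\Kos{(Md,Md)}{M\bd a}$ is the number of lattice points in a dilated segment.
\item For $\Gr(2,5)$ the moduli space is a surface. With $3$ sections it must be $\mathbb P^2$ with $\mathcal O(1)$, giving $\binom{M+2}{2}$. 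I would instead verify this by parametrizing two-row tableaux as lattice points of a triangle in the plane, whose $M$-th dilate has $\binom{M+2}{2}$ points.
\end{itemize}
Combining with Step 1 gives the theorem.
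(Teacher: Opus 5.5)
Your architecture matches the paper's. You reduce via the $\sigma$-stable decomposition to a single $\sigma$-stable factor with nondegenerate content, bound such factors below by $4$ outside $\Gr(2,4)$, $\Gr(2,5)$, $\Gr(3,5)$, and treat those three directly. Step~1 is essentially the paper's argument. Your multiplicity-two branch is vacuous: for $d\ge1$ every summand is $(k_i,\mathbf 1_{S_i})$ with the $S_i$ pairwise disjoint, so all multiplicities are $1$. Also, as in the paper, you should pass to the indivisible $\sigma_0$ with $\sigma=e\sigma_0$ before applying the tensor-product formula.

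\textbf{Step 2 is the genuine gap.} It is the heart of the result, and you leave it as a plan. ``Local swaps allowed by slack'' gives no guarantee of $n-2$ (resp.\ $4$) \emph{distinct} semistandard tableaux for every nondegenerate content. ``Removing a letter'' is not available either, since all $a_i\ge1$. The missing idea is the merging inequality $K_{\lambda,(\dots,p,q)}\ge K_{\lambda,(\dots,p+q)}$ (Lemma~\ref{lem:polarization}) combined with the averaging bound $a_{n-1}+a_n\le 2kd/n$ (Corollary~\ref{cor:pairbound}).
\begin{itemize}
\item For $n\ge 2k+1$, the averaging bound keeps the merged content nondegenerate.
\item For $n=2k$, the only failure is $\bd a=((d/2)^{2k})$. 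This is handled by merging to an entry equal to $d$ and projecting to $\Gr(k-1,2k-2)$ via Lemma~\ref{lem:T3}.
\item For $n<2k$, one dualizes.
\end{itemize}
This induction rests on two nontrivial base cases you do not address. The first is $\Gr(2,n)$ with bound $n-2$, which the paper gets from the Clebsch--Gordan recursion (Proposition~\ref{prop:A}). The second is $\Gr(3,6)$, where merging only yields $3$. A fourth tableau must be built from Kostka positivity, with $5$ and $6$ in one column, so that it lies outside the image of the merge by Proposition~\ref{prop:T1image}. The balanced content $(a^6)$ must also be done by hand.

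\textbf{Step 3 also has a gap.} Your claim that three sections on the surface $\mathcal M_\sigma$ force $(\mathbb P^2,\mathcal O(1))$ is not justified, and neither is the analogous ``degree $2$ on $\mathbb P^1$''. The ring $\bigoplus_M \SI(Q,\beta)_{M\sigma}$ need not be generated in degree one. So degree-one sections only give a rational map, and a degree-two Hilbert polynomial is not pinned down by its values $1,3$ at $M=0,1$.

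Your lattice-point alternative is the right one. For $\Gr(2,5)$, however, it hinges on the polygon $\mathcal P_{\bd a}$ having integral vertices (Theorem~\ref{thm:integral}), where the constraint pairs with determinant $\pm2$ must be checked by hand. Without this the count fails: $\mathrm{conv}\{(0,0),(1,0),(0,1.9)\}$ has exactly three lattice points but seven in its second dilate.

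For $\Gr(2,4)$ your segment count does work, and it is more elementary than the paper's appeal to isotropic Schur roots. The free coordinate $X_2$ ranges over $[\max(a_1,a_2,d-a_3,d-a_4),\ \min(a_1+a_2,d)]$. Its endpoints are integral and linear in $(d,\bd a)$, so three lattice points force $2M+1$.
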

The proof of above Theorem relies on finding certain lower bounds for these rectangular Kostka coefficients, or equivalently, dimension of weight spaces $\dim_{\mathbb{K}}\SI^{\,\Gr(k,n)}_{(d;\bd a)}$.

\begin{thmx}[see Proposition \ref{prop:A} and Theorem \ref{thm:star}]
\label{thm:thmC}
    Let $k\ge2$, $n\ge k+2$, $d\ge1$, and let $\bd a$ be a nondegenerate content for $(k,n,d)$, i.e.\ $\sum_ia_i=kd$ and $1\le a_i\le d-1$ for all $i$. We obtain the following bounds on the weight spaces determined by the nondegenerate content $\bd a$
    \[
        \dim_{\mathbb{K}}\SI^{\,\Gr(k,n)}_{(d;\bd a)}\ =\ \Kos{(d^k)}{\bd a}\ \ge\
        \begin{cases}
            n-2,&(k,n)=(2,n)\\[1mm]
            3,&(k,n)=(3,5)\\[1mm]
            4,&\text{otherwise}
        \end{cases}
    \]
\end{thmx}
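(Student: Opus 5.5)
We prove the lower bound on $\Kos{(d^k)}{\bd a}$ directly in the language of semistandard tableaux of rectangular shape $(d^k)$. Such a tableau is the same as a sequence of column-strict fillings. Equivalently, it is a chain of partitions
\[
\emptyset=\lambda^{(0)}\subset\lambda^{(1)}\subset\dotsb\subset\lambda^{(n)}=(d^k),
\]
in which each $\lambda^{(i)}/\lambda^{(i-1)}$ is a horizontal strip of size $a_i$. Two reductions come first.
\begin{enumerate}
\item By the symmetry of Kostka numbers, $\Kos{(d^k)}{\bd a}$ does not depend on the order of the entries of $\bd a$. We may therefore assume $\bd a$ is weakly decreasing.
\item Complementation in the rectangle gives $\Kos{(d^k)}{\bd a}=\Kos{(d^{\,n-k})}{(d-a_1,\dotsc,d-a_n)}$. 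Nondegeneracy is preserved, since $1\le d-a_i\le d-1$. This is the duality $\Gr(k,n)\cong\Gr(n-k,n)$, and it lets us assume $2k\le n$ whenever convenient.
\end{enumerate}

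\textbf{Case $k=2$.} A two-row tableau is determined by how many copies of each letter $i$ go into row $2$; call this number $y_i$. The conditions are $\sum y_i=d$, $0\le y_i\le a_i$, and the lattice inequalities $\sum_{j\le i}y_j\le\sum_{j<i}(a_j-y_j)$ together with $y_1=0$.

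We will exhibit $n-2$ distinct tableaux as follows. Start from the lexicographically smallest filling, which fills row $1$ greedily. Then, for each $i=3,\dotsc,n$, perform a single move: shift one box of the letter $i$ (or an adjacent letter) between the rows.

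The hypothesis $1\le a_i\le d-1$ is exactly what guarantees that each such move keeps the row lengths equal to $d$ and stays inside the lattice constraints. Alternatively, one can argue by induction on $n$: merging the last two letters into one content part yields an injection from tableaux of the shorter content, plus at least one extra tableau.

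\textbf{Case $k\ge3$ and $n\ge k+2$.} We induct on $n$, using two operations.
\begin{enumerate}
\item Merging two contents $a_i,a_j$ whose sum is at most $d-1$ gives a nondegenerate content of length $n-1$. The Kostka number does not increase under this merge, because of the dominance monotonicity $\Kos{\lam}{\bd a}\ge\Kos{\lam}{\bd b}$ when $\bd a$ refines $\bd b$. So the lower bound passes from $n-1$ to $n$.
\item If no such pair exists, every pair has $a_i+a_j\ge d$, so all $a_i\ge d/2$ except possibly one. Then $kd=\sum a_i$ forces $n\le 2k+1$. Combined with $n\ge k+2$ and the duality reduction, this leaves a finite list of shapes $(k,n)$ to check by hand. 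In particular it leaves $(3,5)$, where we will verify that the bound $3$ is attained, for example by $d=2$ and $\bd a=(1,1,1,1,1)$, with the third column forced.
\end{enumerate}

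The base cases are $(k,n)=(3,6)$ and $(4,6)$, which are dual to the $k=2$ cases, and $(3,7)$. For these we would produce four explicit tableaux, obtained by permuting which rows receive the smallest contents.

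\textbf{Main obstacle.} The hardest step is the terminal case in which no merge is possible. There we need an explicit construction of four tableaux, made uniform in $d$. The approach we would try first is to describe the tableaux via their Gelfand–Tsetlin patterns. We would then use the nondegenerate inequalities to show that at least two interior Gelfand–Tsetlin entries have a nontrivial range, each of size at least $2$. Their free choices would then give $2\times2=4$ patterns. Exactly this product structure fails only for $(2,4)$, $(2,5)$ and $(3,5)$.
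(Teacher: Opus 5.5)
Your reductions (sorting, duality, and merging two parts with sum at most $d-1$ via $\Kos{\lam}{\bd a}\ge\Kos{\lam}{\bd a^{\mathrm{mrg}}}$) agree with the paper's in the generic cases. The two places where merging is not enough, however, are exactly where your proposal gives no argument.

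The first is $(3,6)$. It is self-dual ($n-k=3$), so it is not dual to a $k=2$ case. Merging from it lands in $(3,5)$, where the bound is only $3$ and is sharp: for $d=3$, $\bd a=(2,2,2,1,1,1)$ merges to $(2,2,2,1,2)$, and $\Kos{(3^3)}{(2,2,2,1,2)}=\Kos{(3,3)}{(2,1,1,1,1)}=3$. So monotonicity yields only $3$, and you need a fourth tableau outside the image of the merge map. The paper supplies this in three steps:
\begin{itemize}
\item Proposition~\ref{prop:T1image} identifies that image as the tableaux with no column containing both $5$ and $6$.
\item Proposition~\ref{prop:Tstar} builds a tableau with such a column. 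It fills $\mu=(d,d-1,d-a_5-a_6+1)$ by Kostka positivity, using $\mu\dom(a_1,\dots,a_4)$, and puts $5$ above $6$ in the last column.
\item When no merge is possible, the content is $(a^6)$, and four chains $A^aB^a$ are written down.
\end{itemize}
``Permuting which rows receive the smallest contents'' is not such an argument. Your Gelfand--Tsetlin heuristic also fails, because interlacing couples the coordinates. Already for $\Gr(2,5)$ with $\bd a=(2,1,1,1,1)$, both coordinates $u,v$ of Definition~\ref{def:polygon} take two values, yet there are only three lattice points.

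The second gap is your claim that the unmergeable case leaves a finite list. By Corollary~\ref{cor:pairbound}, being unmergeable forces $n\le 2k$ (not $2k+1$). After dualizing, only $n=2k$ remains, and then $\bd a=((d/2)^{2k})$. Every pair sums to exactly $d$, and both the shape $(k,2k)$ and the content are self-dual. So neither merging nor duality makes progress, and this happens for every $k\ge3$: an infinite family. The paper merges anyway, obtaining $((d/2)^{2k-2},d)$, which has an entry equal to $d$. The projection Lemma~\ref{lem:T3} then passes to $((d/2)^{2k-2})$ on $(k-1,2k-2)$. This is why the paper uses a well-founded induction that lowers $k$ as well as $n$, bottoming out at $(3,6)$. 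Alternatively, you could count the chains $A^{d/2}B^{d/2}$ with $B=\{1,\dots,2k\}\smallsetminus A$ and $A\le B$; there are Catalan-many of them.

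Your $k=2$ case is also only a sketch. Moving a single box between the rows changes the row lengths, so your ``moves'' are not well defined. The alternative (merge plus one extra tableau) needs precisely a tableau with a column $(n-1,n)$, i.e.\ a $k=2$ analogue of Proposition~\ref{prop:Tstar}. It must also cover $n=4$, $\bd a=(a,a,a,a)$, where the merged content is degenerate. The paper instead uses the Clebsch--Gordan recursion of Proposition~\ref{prop:fusion}.

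Finally, $(1,1,1,1,1)$ is not a content for $(3,5,2)$, since the entries must sum to $6$.
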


This, together with the $\sigma$-stable decomposition (see Section \ref{sec:section2}), reduces the proof to the study of only three Grassmannians: $\Gr(2,4)$, $\Gr(2,5)$, and $\Gr(3,5)$. The last of these is isomorphic to $\Gr(2,5)$ by duality.

\begin{remark}
    With regards to Theorem~\ref{thm:thmC}, it has a geometric meaning. If $\bd a$ is nondegenerate, then $\beta$ is $\sigma$-stable by Proposition~\ref{prop:stability}, so King's moduli space \cite{King}
    \[
        \mathcal M_\sigma:=\operatorname{Proj}\bigl(\bigoplus_{M\ge0}\SI(Q,\beta)_{M\sigma}\bigr)
    \]
    of $\sigma$-semistable representations has dimension $1-\langle\beta,\beta\rangle=k(n-k)-n+1$, see \cite[Section 3]{CarChinInv}. The Hilbert function $M\mapsto\dim_{\mathbb K}\SI(Q,\beta)_{M\sigma}$, which is a polynomial by \cite{DerkWeyPoly}, has this degree. The two alternatives in
    Conjecture~\ref{conj:three} are polynomials of degree $1$ and $2$, and
    $k(n-k)-n+1\le2$ holds precisely for $(k,n)\in\{(2,4),(2,5),(3,5)\}$. Other than these
    three Grassmannians the conjecture therefore predicts that a \emph{nondegenerate content} is
    never $3$-dimensional, and this is the theme of  Theorem~\ref{thm:thmC}. We stress
    that this does not follow from the dimension count since the homogeneous coordinate ring of $\mathcal M_\sigma$ need not
    be generated in degree one.
\end{remark}

The paper is organized as follows. In Section \ref{sec:section2}, we introduce the preliminary notions concerning quiver representations and semi-invariant rings that will be used throughout the paper. In Section \ref{sec:Section 3}, we establish a connection between the semi-invariant ring of the subspace quiver and rectangular Kostka coefficients. Section \ref{sec:section 4} is devoted to reduction methods for Kostka coefficients, along with several of their properties that play a key role in the proofs of the main results presented later in the paper. In Sections \ref{sec:section 5} and \ref{sec:section 6}, we establish certain lower bounds for the weight spaces of the semi-invariant rings associated with the Grassmannians $\Gr(k,n)$. Finally, in Section \ref{sec:section 7}, we prove the King--Tollu--Toumazet conjecture for rectangular Kostka coefficients and give examples illustrating the idea of the proof.

\section{Quiver representations and semi-invariant rings}
\label{sec:section2}

We recall some important notions on quiver representations and semi-invariant rings. We refer the readers to \cite{DerkWeyBook} for a more detailed exposition.

Throughout this article $\mathbb{K}$ denotes an algebraically closed field of characteristic zero, and $\mathbb{N}$ is the set of non-negative integers. A \emph{quiver} $Q=(Q_0,Q_1)$ is a directed (multi-)graph where $Q_0$ is the set of vertices and $Q_1$ is the set of arrows. For an arrow $a\in Q_1$, we write $ta$ for the \emph{tail} of the arrow $a$ and $ha$ for the \emph{head} of the arrow $a$. A \emph{non-trivial path} in the quiver $Q$ is a sequence $p=a_na_{n-1}\dotsb a_1$ of arrows in $Q_1$ such that $ta_{i+1}=ha_i$ for every $1\leq i\leq n-1$. The head and tail of path $p$ are defined as $hp\coloneqq ha_n$ and $tp\coloneqq ta_1$, respectively. An \emph{oriented cycle} in $Q$ is a non-trivial path $p$ such that $hp=tp$. For the rest of the article, we assume $Q$ is a quiver without any oriented cycles. 

A (finite-dimensional) \emph{representation} $V$ of $Q$ over $\mathbb{K}$ is given by $V = (\lbrace V_x\rbrace_{x\in Q_0}, \lbrace V (a)\rbrace_{a\in Q_1})$ where the $V_x$ are finite dimensional $\mathbb{K}$-vector spaces and $V (a) : V_{ta}\rightarrow V_{ha}$ is a $\mathbb{K}$-linear map for each arrow $a \in Q_1$. The \emph{dimension vector} $\beta\in\mathbb{N}^{\lvert Q_0\rvert}$ of a representation $V$, denoted by $\underline{\dim}\, V$, is defined as $\beta(x)=\dim_{\mathbb{K}}V_x$. A representation $W$ of $Q$ is called a \emph{subrepresentation} of a
representation $V$ if $W_x$ is a subspace of $V_x$ for all $x \in Q_ 0$ and $W(a) : W_{ta} \rightarrow W_{ha}$ is the restriction of $V(a) :V_{ta} \rightarrow V_{ha}$, for all $a \in Q _1$.

Fix a dimension vector $\beta\in\mathbb{N}^{\lvert Q_0\rvert}$. The affine \emph{representation space} $\rep_{\beta}(Q)$ parametrizes the $\beta$-dimensional
representations of $Q$ along with a fixed basis. Denote by $\mathrm{Mat}_{a,b}$ the set of $a\times b$ matrices with entries in $\mathbb{K}$. Hence, we have
\[
    \rep_\beta(Q)= \prod_{a\in Q_1}\mathrm{Mat}_{\beta(ha),\beta(ta)}
\]
We have a regular action of the algebraic group
\[
    \GL_{\beta} \coloneqq \prod_{x\in Q_0}\GL_{\beta(x)}
\]
on $\rep_{\beta} (Q)$ given by $g\cdot V = (g_{ha}V (a)g^{-1}_{ta} )_{a\in Q_1}$ where $V=(V (a))_{a\in Q_1} \in \rep_{\beta}(Q)$ and $g=(g_x)_{x\in Q_0} \in \GL_{\beta}$. The orbits correspond to isomorphism classes of $\beta$-dimensional representations of $Q$. The action of $\GL_{\beta}$ on $\rep_{\beta}(Q)$ induces an action on the elements of the coordinate ring $\mathbb{K}[\rep_{\beta}(Q)]$ as follows: for every $g\in\GL_{\beta},f\in\mathbb{K}[\rep_{\beta}(Q)]$ and $x\in\rep_{\beta}(Q)$, we have $g\cdot f(x)\coloneqq f(g^{-1}x)$. We define
\[
    \SI(Q,\beta)\coloneqq \mathbb{K}[\rep_{\beta}(Q)]^{\SL_{\beta}},\;\text{where}\;\SL_{\beta}\coloneqq\prod_{x\in Q_0}\SL_{\beta(x)}
\]
as the \emph{ring of semi-invariant} functions on $\rep_{\beta}(Q)$.

For every weight $\sigma\in\mathbb{Z}^{\lvert Q_0\rvert}$, we can define a multiplicative character of $\GL_\beta$ as follows:
\[
    \begin{array}{ccccc}
         \chi_\sigma & : & \GL_\beta & \longrightarrow & \mathbb{K}^*\\
         \, & \, & (g_x)_{x\in Q_0} & \longmapsto & \prod_{x\in Q_0} \det(g_x)^{\sigma(x)}
    \end{array}
\]
The semi-invariant ring $\SI(Q,\beta)$ has a decomposition into weight spaces,
\[
    \SI(Q,\beta)=\bigoplus_{\sigma\in\mathbb{Z}^{\lvert Q_0\rvert}}\SI(Q,\beta)_{\sigma}
\]
where
\[
    \SI(Q,\beta)_{\sigma}\coloneqq\lbrace f\in\mathbb{K}[\rep_{\beta}(Q)]\mid g\cdot f=\chi_{\sigma}(g)\,f,\text{ for every }g\in\GL_\beta\rbrace
\]
is called the \emph{space of semi-invariants of weight $\sigma$}. A semi-invariant in a given weight space is said to be \emph{homogeneous}. We define the cone of weights
\[
    \Sigma(Q,\beta)\coloneqq\lbrace\sigma\in\mathbb{Z}^{Q_0}\mid\SI(Q,\beta)_{\sigma}\neq0\rbrace
\]

As a consequence of the Hilbert Finiteness Theorem \cite[see Theorem 9.2.6 and Corollary 9.2.8]{DerkWeyBook}, it follows that $\SI(Q,\beta)$ is a finitely generated $\mathbb{K}$-algebra. Thus, the cone of weights $\Sigma(Q,\beta)$ is a finitely generated semigroup.

For a quiver $Q=(Q_0,Q_1)$, we define the Euler form $\langle\cdot,\cdot\rangle$ on the Grothendieck group $K_{0}(\mathbb{K}Q)$ of the path algebra $\mathbb{K}Q$ of quiver $Q$, which can be identified with $\mathbb{Z}^{\lvert Q_0\rvert}$, as
\[
    \langle\alpha,\beta\rangle\coloneqq\sum_{x\in Q_0}\alpha(x)\beta(x)-\sum_{a\in Q_1}\alpha(ta)\beta(ha),\;\text{where}\;\alpha,\beta\in\mathbb{Z}^{\lvert Q_0\rvert}.
\]
We can also define weights using dimension vectors. Let $\alpha$ be a dimension vector. We define a function
\[
    \begin{matrix}
    \sigma = \langle\alpha,\cdot\rangle : \mathbb{Z}^{\lvert Q_ 0\rvert} \rightarrow \mathbb{Z} 
    \end{matrix}
\]
as follows
\[
    \sigma(x) = \alpha(x) -\sum_{\substack{a\in Q_ 1\\ha=x}}\alpha(ta)\quad\text{for every vertex}\;x\in Q_0
\]
A dimension vector $\alpha$ is said to be a \emph{generic subdimension vector} of dimension vector $\beta$, written as $\alpha\hookrightarrow \beta$, if every representation in $\rep_\beta  (Q)$ has a subrepresentation of dimension vector $\alpha$. For a weight $\sigma=(\sigma_1,\dotsc,\sigma_\ell)$ and dimension vector $\beta=(\beta_1,\dotsc,\beta_\ell)$,  we use $\sigma(\beta)=\sum_{i=1}^{\ell}\sigma_i\beta_i$ to denote the usual dot product of the two vectors in $\mathbb{Z}^{\ell}$. We have the following description of the cone of weights due to Derksen-Weyman.

\begin{theorem}[Theorem 10.7.8 \cite{DerkWeyBook}]
\label{thm:coneofweights}
    The cone of weights $\Sigma(Q,\beta)$ is a rational polyhedral cone described as follows:
    \[
        \Sigma(Q,\beta) = \lbrace\sigma = \langle\alpha,\cdot\rangle : \sigma(\beta) = 0\; \text{and for all}\;\delta \hookrightarrow \beta, \sigma(\delta) \leq 0\rbrace
    \]
    In particular $\Sigma(Q,\beta)$ is saturated, i.e.\ if $n\sigma \in \Sigma(Q,\beta)$, for some positive integer $n$, then $\sigma \in \Sigma(Q,\beta)$.
\end{theorem}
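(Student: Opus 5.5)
The plan is to prove the two inclusions separately. King's criterion handles $\Sigma(Q,\beta)\subseteq$ RHS. For the reverse inclusion, and for saturation, I would use the Derksen--Weyman spanning theorem for Schofield's semi-invariants $c^V$ together with Schofield's description of $\operatorname{ext}$. Since the statement is quoted from \cite{DerkWeyBook}, this is only a reconstruction of the argument's skeleton.

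\emph{Step 1 (the form $\sigma=\langle\alpha,\cdot\rangle$ is automatic).} Because $Q$ has no oriented cycles, order the vertices so that arrows go from smaller to larger index. Then the Gram matrix of $\langle\cdot,\cdot\rangle$ is unitriangular, hence invertible over $\ZZ$. So every $\sigma\in\ZZ^{|Q_0|}$ equals $\langle\alpha,\cdot\rangle$ for a unique $\alpha\in\ZZ^{|Q_0|}$. The real content of the statement is therefore the two inequality conditions.

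\emph{Step 2 (necessity).} Suppose $0\neq f\in\SI(Q,\beta)_\sigma$. Restricting $f$ to the torus of scalars $t\cdot\mathrm{id}$ in $\GL_\beta$, which acts trivially on $\rep_\beta(Q)$, forces $\chi_\sigma(t)=t^{\sigma(\beta)}=1$, so $\sigma(\beta)=0$. Choose $W$ with $f(W)\neq0$. By King's theorem (with the sign convention matching $\chi_\sigma$), $W$ is $\sigma$-semistable, i.e.\ $\sigma(\underline{\dim}\,W')\le0$ for every subrepresentation $W'\subseteq W$. If $\delta\hookrightarrow\beta$, then $W$ has a subrepresentation of dimension $\delta$, so $\sigma(\delta)\le0$. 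This gives $\Sigma(Q,\beta)\subseteq$ RHS.

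\emph{Step 3 (sufficiency, the main obstacle).} Conversely, assume $\sigma(\beta)=0$ and $\sigma(\delta)\le0$ for all $\delta\hookrightarrow\beta$. Each subrepresentation of a general $W$ has a generic subdimension vector, so by Schofield's theory the general $W\in\rep_\beta(Q)$ is $\sigma$-semistable. King's theorem then gives some $M\ge1$ with $\SI(Q,\beta)_{M\sigma}\neq0$.

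It remains to pass from $M\sigma$ to $\sigma$, which is exactly the saturation claim. For this I would use the theorem that $\SI(Q,\beta)_{\langle\alpha,\cdot\rangle}$ is spanned by the $c^V$ with $\underline{\dim}\,V=\alpha$. Then $\SI(Q,\beta)_\sigma\neq0$ if and only if $\alpha\ge0$ and $\operatorname{ext}(\alpha,\beta)=0$, together with $\langle\alpha,\beta\rangle=0$.

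Schofield's criterion says $\operatorname{ext}(\alpha,\beta)=0$ if and only if $\langle\alpha,\beta'\rangle\ge0$ for all $\beta'$ with $\beta\twoheadrightarrow\beta'$, i.e.\ $\langle\alpha,\beta-\beta'\rangle\le 0$ where $\beta-\beta'\hookrightarrow\beta$. This is precisely the hypothesis $\sigma(\delta)\le0$ combined with $\sigma(\beta)=0$. The condition $\alpha\ge0$ follows from $M\alpha\ge0$, which holds because $\SI(Q,\beta)_{M\sigma}\neq0$.

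The hard part is the spanning theorem for the $c^V$, which I would take as known from \cite{DerkWeyBook}. Once it is granted, both membership and saturation follow, because every condition above is homogeneous in $\sigma$. Finally, the cone is rational polyhedral because finitely many generic subdimension vectors suffice: they are bounded by $\beta$ componentwise.
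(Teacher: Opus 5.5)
Your reconstruction is correct. The paper gives no proof of its own and simply quotes \cite{DerkWeyBook}, and your skeleton is essentially the Derksen--Weyman argument. The spanning theorem for the $c^V$ reduces nonvanishing of $\SI(Q,\beta)_{\langle\alpha,\cdot\rangle}$ to $\operatorname{ext}(\alpha,\beta)=0$. Schofield's formula $\operatorname{ext}(\alpha,\beta)=\max_{\beta\twoheadrightarrow\beta''}\bigl(-\langle\alpha,\beta''\rangle\bigr)$ then turns this into the stated inequalities, and saturation follows because the conditions are homogeneous in $\sigma$. Your use of King's criterion for the easy inclusion and to obtain $\alpha\ge 0$ is only a variant.

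One assumption is implicit: $\beta$ must be sincere, so that the character determines $\alpha$ and $\SI(Q,\beta)_{M\sigma}\neq0$ forces $M\alpha\ge0$. This holds for $\beta=(k,1,\dots,1)$ in the paper.
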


A representation $V\in\rep_{\beta}(Q)$ is said to be a \emph{general representation} with respect to a property $\mathcal{P}$ if there is an open (dense) subset $U\subseteq\rep_{\beta}(Q)$ such that $V\in U$ and every representation in $U$ satisfies the property $\mathcal{P}$. We call a representation $V\in \rep_{\beta}(Q)$ to be a \emph{Schur representation} if $\mathrm{End}(V)\cong \mathbb{K}$ as a $\mathbb{K}$-algebra. A dimension vector $\beta$ is called a \emph{Schur root} if a general representation in $\rep_{\beta}(Q)$ is a Schur representation, i.e.\ there is an open non-empty set $U\subseteq\rep_{\beta}(Q)$ such that any representation in $U$ is a Schur representation.

Let $\beta$ be a Schur root. We call $\beta$ a \emph{real} Schur root if $\langle\beta,\beta\rangle=1$, we call $\beta$ an \emph{isotropic} Schur root if $\langle\beta,\beta\rangle=0$, and we call $\beta$ an \emph{imaginary} Schur root if $\langle\beta,\beta\rangle<0$. A fact about real Schur root that we will use often is that if $\beta$ is a real Schur root then $\dim_{\mathbb{K}}\SI(Q,\beta)_{\sigma}\leq 1$ for every weight $\sigma$, see \cite[Lemma 9.4.2]{DerkWeyBook} or \cite{CKW}.

Due to \cite{King}, we have the following notion of $\sigma$-(semi)stability with respect to a weight $\sigma\in\mathbb{Z}^{\lvert Q_0\rvert}$. Fix a weight $\sigma\in\mathbb{Z}^{\lvert Q_0\rvert}$. A representation $V\in\rep_\beta(Q)$ is called \emph{$\sigma$-semistable} (resp.\ \emph{$\sigma$-stable}) if $\sigma(\beta)=0$ and $\sigma(\underline{\dim} W)\le 0$ (resp.\ $\sigma(\underline{\dim} W)<0$) for every subrepresentation $W\subseteq V$ (resp.\ every nonzero proper subrepresentation $0\neq W\subsetneq V$).

From \cite[Section 3.2]{CombQuivDerkWey} we know that a dimension vector $\beta$ is called $\sigma$-(semi)stable if a general representation in $\rep_{\beta}(Q)$ is $\sigma$-(semi)stable. Suppose $\beta$ is a $\sigma$-semistable dimension vector. A decomposition
\[
    \beta=\beta_1+\beta_2+\dotsc+\beta_t
\]
is called the \emph{$\sigma$-stable} decomposition of $\beta$ if a general representation in $\rep_{\beta}(Q)$ has a Jordan-H\"older filtration with factors of dimension vector $\beta_1,\beta_2,\dotsc,\beta_t$ in some order. We write the $\sigma$-stable decomposition of $\beta$ as
\[
    \beta=\beta_1\dotplus\beta_2\dotplus\dotsc \dotplus\beta_t
\]

\section{Grassmannians and rectangular Kostka coefficients}
\label{sec:Section 3}

Consider the $n$-subspace quiver
\[
    \begin{tikzcd}
        1 & 2 & \substack{\dotsb\\ n\;\text{vertices}} & n-1 & n \\
        && 0 &&
        \arrow[from=1-1,to=2-3]
        \arrow[from=1-2,to=2-3]
        \arrow[from=1-4,to=2-3]
        \arrow[from=1-5,to=2-3]
    \end{tikzcd}
\]
with dimension vector $\beta=\bigl(k,\overbrace{1,1,\dotsc,1}^{n\text{-many}}\bigr)$, where $n\geq k$. As a consequence of the First and Second Fundamental Theorems of invariant theory (also see \cite[Chapter 2, Section 2.2 \& 2.3]{IgorInvTh}), we get that the semi-invariant ring
\[
    \SI(Q,\beta)\cong\mathbb{K}[\Gr(k,n)]
\]
the homogeneous coordinate ring of the Grassmannian $\Gr(k,n)$ of $k$-dimensional linear subspaces of $n$-dimensional space.

For $I\subseteq\{1,\dotsc,n\}$, we will write $\mathbf{1}_{I}\in\mathbb{Z}^{n}$ for its
indicator vector so that $(\mathbf{1}_{I})_i=1$ if $i\in I$, and $0$ otherwise. Hence, we have $\mathbf{1}_{\varnothing}=0$, $\mathbf{1}_{\{i\}}=e_i$ and $\mathbf{1}_{\{1,\dotsc,n\}}=\mathbf{1}$.

\begin{prop}
\label{prop:stability}
    Let $k\ge 2$.
    {\renewcommand{\labelenumi}{(\alph{enumi})}%
     \renewcommand{\labelenumii}{(\roman{enumii})}%
    \begin{enumerate}
      \item Any weight $\sigma\in\Sigma(Q,\beta)$ is of the form
            $\sigma=(-d,a_1,a_2,\dotsc,a_n)$ where $d,a_i\in\mathbb{N}$ with $\sum_ia_i=kd$ and $a_i\leq d$
      \item For such a weight $\sigma\in\Sigma(Q,\beta)$, the dimension vector $\beta$ is
            \begin{enumerate}
              \item $\sigma$-semistable if and only if $0\le a_i\le d$ for all $i$
              \item $\sigma$-stable if and only if $1\le a_i\le d-1$ for all $i$ (assume $d\geq 2$).
            \end{enumerate}
    \end{enumerate}}
    
\end{prop}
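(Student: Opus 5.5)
Part (a) comes first, from Theorem~\ref{thm:coneofweights}. A weight $\sigma\in\Sigma(Q,\beta)$ has the form $\langle\alpha,\cdot\rangle$ with $\alpha$ a dimension vector. Computing with the Euler form of the subspace quiver (all arrows $i\to 0$) gives $\sigma(i)=\alpha(i)\ge 0$ for $i\ge1$ and $\sigma(0)=\alpha(0)-\sum_i\alpha(i)$. Write $\sigma=(-d,a_1,\dots,a_n)$ with $a_i\in\mathbb N$. The condition $\sigma(\beta)=0$ reads $-kd+\sum_i a_i=0$, so $\sum_i a_i=kd$, and then $d\ge0$ is forced. To get $a_i\le d$, I use generic subdimension vectors. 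For a general representation, $V(a_i)$ is a nonzero vector $v_i\in\mathbb K^k$. Hence the dimension vector $\delta=(1,0,\dots,0,1_i,0,\dots,0)$, namely the line $\mathbb K v_i$ at vertex $0$ together with vertex $i$, satisfies $\delta\hookrightarrow\beta$; this holds for every representation once one allows $V(a_i)=0$, using any line. Then $\sigma(\delta)=-d+a_i\le0$, so $a_i\le d$.

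For (b)(i), the same inequalities come from King's semistability. Since $\beta$ is $\sigma$-semistable if and only if $\SI(Q,\beta)_{M\sigma}\ne0$ for some $M$, which by saturation is equivalent to $\sigma\in\Sigma(Q,\beta)$, the substance is the converse: $0\le a_i\le d$ and $\sum_i a_i=kd$ imply $\sigma\in\Sigma$. I check this directly on a general representation with vectors $v_1,\dots,v_n$ in general position. A subrepresentation is a subspace $W_0\subseteq\mathbb K^k$ of dimension $m$ together with a subset $I$ of indices with $v_i\in W_0$, so $\sigma(\dim W)=-dm+\sum_{i\in I}a_i$. By general position $|I|\le m$ when $m<k$, so the sum is at most $md$. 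When $m=k$ the expression is $-kd+\sum_{i\in I}a_i\le0$ because all $a_i\ge0$. The case $m=0$ forces $I=\varnothing$, since the $v_i$ are nonzero.

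For (b)(ii), strict inequality for every nonzero proper subrepresentation has to be checked in the same cases, now assuming $1\le a_i\le d-1$.
\begin{itemize}
\item If $0<m<k$ and $I\ne\varnothing$, then $\sum_{i\in I}a_i\le |I|(d-1)<md$.
\item If $0<m<k$ and $I=\varnothing$, the value is $-dm<0$; this uses $d\ge1$.
\item If $m=k$ and $I\ne\{1,\dots,n\}$, then $-kd+\sum_{i\in I}a_i<0$ since every $a_i\ge1$.
\item If $m=0$, then $I=\varnothing$ and $W=0$.
\end{itemize}
Conversely, if some $a_i=d$, the generic subdimension vector $\delta$ above gives $\sigma(\delta)=0$ with $\delta$ nonzero and proper, since $k\ge2$, so $\beta$ is not stable. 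If some $a_i=0$, the subrepresentation $(\mathbb K^k,\text{all } j\ne i)$ has value $0$ and is proper, so again $\beta$ is not stable.

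The main point needing care is the general-position claim that for a general representation, any $m$-dimensional subspace with $m<k$ contains at most $m$ of the $v_i$. This is immediate because any $k$ of the general vectors are linearly independent, so any $m+1$ of them span a space of dimension $m+1$.
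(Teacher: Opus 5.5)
Your proof is correct and follows essentially the same route as the paper: it also gets $a_i\le d$ from the subdimension vector $(1,e_i)$, non-stability from $(1,e_i)$ and $(k,\mathbf{1}-e_i)$, and stability by checking the subrepresentations $(r,\mathbf{1}_I)$ with $|I|\le r\le k-1$ and $(k,\mathbf{1}_I)$. The only differences are minor: you obtain $a_i\ge 0$ from $\alpha$ being a dimension vector rather than from $(k,\mathbf{1}-e_i)$, and you justify the list of subrepresentations of a general representation by a general-position argument, which the paper simply asserts.
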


\begin{proof}
    The possible generic subdimension vectors are
    \[
        \gamma=(r,\mathbf 1_I)\ \text{ with }|I|\le r\le k-1\qquad\text{and}\qquad
        \gamma=(k,\mathbf 1_I)\ \text{ with }I\subseteq\{1,\dots,n\}\ \text{arbitrary},
    \]
    along with $\gamma=0$ and $\gamma=\beta$.
    
    (a) Consider the following three subdimension vectors
      \[
        (k,0,\dotsc,0),\quad (k,\mathbf{1}-e_i), \quad (1,e_i)
    \]
    
    Let $\sigma=(\sigma_0,\sigma_1,\dotsc,\sigma_n)\in\Sigma(Q,\beta)$. Using the description of the cone of weights $\Sigma(Q,\beta)$ as in Theorem \ref{thm:coneofweights}, the equality $\sigma(\beta)=0$ and the above subdimension vectors give the following
    \[
      k\sigma_0+\sum_{j}\sigma_j=0,\qquad
      k\sigma_0\le 0,\qquad
      k\sigma_0+\sum_{j\neq i}\sigma_j\le 0,\qquad
      \sigma_0+\sigma_i\le 0 .
    \]
    The first inequality gives $\sigma_0\leq 0$. We will write $\sigma_0=-d$ with $d\ge 0$. If we subtract the second inequality from the equality we get $\sigma_i\ge 0$. Let us write $a_i=\sigma_i$. The third inequality says $a_i\le d$, and the equality gives $\sum_i a_i=kd$. Conversely, it is easy to check that any weight $\sigma$ of this form lies in the cone of weights $\Sigma(Q,\beta)$.

    (b) We will only prove part $(ii)$. The proof of part $(i)$ is similar. 
    
    $(\Longrightarrow)$ Taking $\gamma=(1, e_i)$ gives $\sigma(\gamma)=a_i-d<0$. Hence, $a_i\le d-1$. If we take $\gamma=(k, \mathbf 1_{\{1,\dots,n\}\smallsetminus\{i\}})$, which is a
    proper subrepresentation, then we get $\sigma(\gamma)=-kd+(kd-a_i)=-a_i<0$. Hence, $a_i\ge1$.
    
    $(\Longleftarrow)$ Assume $1\leq a_i\leq d-1$. For $\gamma=(r,\mathbf 1_I)$ with
    $1\le r\le k-1$ and $|I|\le r$,
    \[
    \sigma(\gamma)=-rd+\sum_{i\in I}a_i\ \le\ -rd+|I|(d-1)\ \le\ -rd+r(d-1)=-r<0 .
    \]
    For $\gamma=(k,\mathbf 1_I)$ with $I\subsetneq\lbrace1,2,\dotsc,n\rbrace$, we get that $\sigma(\gamma)=-kd+\sum_{i\in I}a_i=-\sum_{i\notin I}a_i<0$ since some $a_i\ge1$.
\end{proof}

This leads to the following definition:

\begin{defn}\label{def:content}
    Fix $n\geq k\ge2$ and a non-negative integer $d$. A \emph{content} for $(k,n,d)$ is a sequence $\bd a=(a_1,\dots,a_n)$ of non-negative integers such that $a_i\leq d$ and $\sum_{i=1}^{n}a_i=kd$. Additionally, if the content $\bd a$ satisfies $1\leq a_i\leq d-1$ for all $1\leq i\leq n$, then it is said to be \emph{nondegenerate} (assume $d\geq 2$).
\end{defn}

We recall the definition of Kostka coefficients.

\begin{defn}\label{def:ssyt}
    Let $\lam$ be a partition and $n\ge1$. A \emph{semistandard Young tableau} (SSYT) $T$ of shape $\lam$ in the letters $\{1,\dots,n\}$ is a filling of the Young diagram corresponding to $\lam$ such that
    \begin{enumerate}[label=(\roman*),nosep]
        \item along each row from left to right the entries are weakly increasing, and
        \item along each column the entries are strictly increasing.
    \end{enumerate}
    The \emph{content} of $T$ is a sequence of non-negative integers $(a_1,\dots,a_n)$ where $a_i$ is
    the number of cells of $T$ containing the letter $i$. We will write $\SSYT(\lam,\bd a)$ for the
    set of semistandard Young tableau (SSYT) of shape $\lam$ and content $\bd a$. Hence,
    \[
        \Kos{\lam}{\bd a}\ \coloneqq\ \#\SSYT(\lam,\bd a),
    \]
    is the \emph{Kostka coefficient}. If the partition $\lambda$ is a Young diagram of rectangular shape, then we will call $\Kos{\lam}{\bd a}$ a \emph{rectangular Kostka coefficient}.
\end{defn}
As a consequence of \cite[Theorem 7.10.2]{StanVol2} we know that $\Kos{\lam}{\bd a}$ does not change if the entries of $\bd a$ are permuted, so, when needed, we will sort the entries of $\bd a$.

Let $W\cong\mathbb{K}^m$ be a $m$-dimensional vector space. Recall that for a partition $\rho$ with at most $m$ parts, the \emph{Schur module} $\mathbb{S}_{\rho}W$ is the irreducible representation of the general linear group $\GL(W)\cong\GL_{m}$ with highest weight $\rho$.

\begin{prop}
\label{prop:dimformula}
     Fix $n\geq k\ge2$ and a non-negative integer $d$. The dimension of the weight spaces of $\SI(Q,\beta)$ is given by the rectangular Kostka coefficients, i.e.,
    \[
        \dim_{\mathbb{K}}\SI(Q,\beta)_{\sigma}\ =\ \Kos{(d^k)}{\bd a}\quad\text{where } \bd a \;\text{is a content for}\;(k,n,d).
    \]
\end{prop}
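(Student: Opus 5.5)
We compute the weight spaces of $\SI(Q,\beta)$ directly, using representation theory of $\GL_\beta$ and Cauchy's formula. The plan is to decompose the coordinate ring first as a $\GL_k$-module, then take $\SL_k$-invariants and read off the torus weights.

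First identify the coordinate ring. Since $\beta=(k,1,\dots,1)$, a representation is an $n$-tuple of vectors $v_1,\dots,v_n\in\mathbb{K}^k$, i.e.\ $\rep_\beta(Q)\cong \mathrm{Mat}_{k,n}=W\otimes U^{*}$ with $W=\mathbb{K}^k$ (vertex $0$) and $U=\mathbb{K}^n$. The group acting at the vertices $1,\dots,n$ is the torus $T=(\mathbb{K}^*)^n\subseteq \GL_n$, and it acts through the columns. Hence
\[
\mathbb{K}[\rep_\beta(Q)]=\mathrm{Sym}(W^*\otimes U)=\bigoplus_{\rho}\mathbb{S}_\rho W^*\otimes \mathbb{S}_\rho U
\]
by the Cauchy formula. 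Here $\rho$ runs over partitions with at most $k$ parts; if $n<k$ some terms vanish, but we assume $n\ge k$.

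Next take the $\SL_k$-invariants. The module $\mathbb{S}_\rho W^*$ has a nonzero $\SL_k$-invariant iff $\rho$ is a rectangle $(d^k)$. In that case it is the one-dimensional representation $\det^{-d}$ of $\GL_k$, up to sign conventions (consistent with $\sigma_0=-d$ under $g\cdot f(x)=f(g^{-1}x)$). Therefore
\[
\SI(Q,\beta)=\bigoplus_{d\ge0}\det{}^{-d}\otimes\mathbb{S}_{(d^k)}U
\]
as a $\GL_k\times T$-module. The summand indexed by $d$ is exactly the span of degree-$d$ products of maximal minors, i.e.\ the degree-$d$ part of $\mathbb{K}[\Gr(k,n)]$.

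Finally take weight spaces. A semi-invariant of weight $\sigma=(-d,a_1,\dots,a_n)$ lies in the $d$-th summand, in the $T$-weight space of $\mathbb{S}_{(d^k)}U$ of weight $\bd a$. By the standard fact that the $T$-weight multiplicity of weight $\bd a$ in $\mathbb{S}_\lambda U$ equals $\Kos{\lam}{\bd a}$ (the character of $\mathbb{S}_\lambda U$ is the Schur polynomial $s_\lambda=\sum_{\bd a}\Kos{\lam}{\bd a}x^{\bd a}$), we get $\dim\SI(Q,\beta)_\sigma=\Kos{(d^k)}{\bd a}$.

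The main care point is the sign convention. We must check that the character $\chi_\sigma$ of the paper, under the contragredient action $f\mapsto f\circ g^{-1}$, matches $\det^{-d}$ at vertex $0$ and weight $+a_i$ at vertex $i$; otherwise the roles of $U$ and $U^*$ must be swapped. This only flips all signs simultaneously, and the result is consistent with Proposition \ref{prop:stability}(a). Alternatively, one can give a combinatorial argument: standard monomial theory says that products of Plücker coordinates indexed by semistandard tableaux of shape $(d^k)$ (columns = $k$-subsets) form a basis, and each such tableau has torus weight equal to its content.
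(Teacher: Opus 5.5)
Your argument is correct, but it organizes the computation differently from the paper. The paper follows the general Derksen--Weyman recipe and applies Cauchy's formula arrow by arrow. Because $\dim V(i)=1$, only one-row partitions survive, and each arm contributes $\mathrm{Sym}^{a_i}V(i)\otimes\mathrm{Sym}^{a_i}V(0)^*$. The weight-space dimension is then the multiplicity of $\mathbb{S}_{(d^k)}V(0)^*=\det{}^{-d}$ in $\bigotimes_i\mathrm{Sym}^{a_i}V(0)^*$, which equals $\Kos{(d^k)}{\bd a}$ by Young's rule (iterated Pieri); the paper uses this without naming it.

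You instead assemble all arms into $\mathrm{Mat}_{k,n}=W\otimes U^*$ and apply the $\GL_k\times\GL_n$ Cauchy formula once. You take $\SL_k$-invariants first to force $\rho=(d^k)$, and only then restrict $\mathbb{S}_{(d^k)}U$ to the torus. Your combinatorial input is therefore the monomial expansion of the Schur polynomial rather than Young's rule. The two facts are dual to each other, via $h_{\bd a}=\sum_\lam\Kos{\lam}{\bd a}s_\lam$.

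What each route buys: the paper's works for any quiver; it is the same mechanism that produces Littlewood--Richardson coefficients for the triple flag quiver. Yours exploits the fact that the arm vertices assemble into the maximal torus of $\GL_n$. In exchange it yields more, namely the full $\GL_n$-module structure $\bigoplus_{\bd a}\SI^{\,\Gr(k,n)}_{(d;\bd a)}\cong\mathbb{S}_{(d^k)}\mathbb{K}^n$ of the degree-$d$ part of $\mathbb{K}[\Gr(k,n)]$.

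Your closing hedge about the sign convention is unnecessary and slightly misleading. With $g\cdot x=g_0\,x\,\mathrm{diag}(t_1,\dots,t_n)^{-1}$ and $(g\cdot f)(x)=f(g^{-1}\cdot x)$, the coordinate ring is exactly $\mathrm{Sym}(W^*\otimes U)$ as you wrote. For instance, a Pl\"ucker coordinate $p_I$ has weight $(-1,\mathbf 1_I)$, so no swap occurs. Had the signs come out flipped, the proposition in its stated form $\sigma=(-d,\bd a)$ would fail. So this step is a genuine verification, not a harmless convention.

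Your standard-monomial alternative is also valid. It is the version closest to the column model of Lemma~\ref{lem:columnmodel}.
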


\begin{proof}
    For calculating $\dim_{\mathbb{K}}\SI(Q,\beta)_{\sigma}$ we will use the description from \cite[Section 10.4]{DerkWeyBook}. Let $V$ be a representation of $Q$ with $\underline{\dim}V=\beta$. Now $\dim_{\mathbb{K}}\SI(Q,\beta)_{\sigma}$ is the multiplicity of
    \begin{align*}
      &\mathbb{S}_{((-d)^k)}V(0)\otimes\mathbb{S}_{((a_1)^1)}V(1)\otimes\mathbb{S}_{((a_2)^1)}V(2)
       \otimes\dotsb\otimes\mathbb{S}_{((a_n)^1)}V(n)\\
      &\qquad\cong\mathbb{S}_{((d)^k)}V(0)^{*}\otimes\mathbb{S}_{((a_1)^1)}V(1)\otimes\mathbb{S}_{((a_2)^1)}V(2)
       \otimes\dotsb\otimes\mathbb{S}_{((a_n)^1)}V(n)
    \end{align*}
    in
    \begin{align*}
      \mathbb{K}[\rep_{\beta}(Q)]
      &= \bigoplus_{\substack{\lambda^{(1)},\dotsc,\lambda^{(n)}\\ \text{partitions}}}\;
         \bigotimes_{i=1}^{n}
         \Bigl(\mathbb{S}_{\lambda^{(i)}}V(i)\otimes\mathbb{S}_{\lambda^{(i)}}V(0)^{*}\Bigr)\\
      &\cong \bigoplus_{\substack{\lambda^{(1)},\,\dotsc\,,\lambda^{(n)}\\ \text{partitions}}}
         \Bigl(\,\bigotimes_{i=1}^{n}\mathbb{S}_{\lambda^{(i)}}V(i)\Bigr)
         \otimes
         \Bigl(\,\bigotimes_{i=1}^{n}\mathbb{S}_{\lambda^{(i)}}V(0)^{*}\Bigr)
    \end{align*}
    Therefore, we get that $\lambda^{(i)}=(a_i)^1$ for every $1\leq i\leq n$. Hence, we are looking for the multiplicity of
    \[
        \mathbb{S}_{((d)^k)}V(0)^{*}
    \]
    in
    \[
        \bigotimes_{i=1}^{n}\mathbb{S}_{(a_{i})}V(0)^{*}
    \]
    Hence, we conclude that
    \[
        \dim_{\mathbb{K}}\SI(Q,\beta)_{\sigma}\ =\ \Kos{(d^k)}{\bd a}
    \]
\end{proof}

\begin{remark}
    We have two consequences of Proposition~\ref{prop:dimformula} that we will use throughout.
    \begin{enumerate}
        \item The term \emph{content} has been used in two ways: for weights of the subspace quiver, see Definition~\ref{def:content}, and for tableaux, see Definition~\ref{def:ssyt}. We note that both the notions agree. Indeed, by Proposition~\ref{prop:stability} and Proposition~\ref{prop:dimformula}, a sequence $\bd a$ is a content for $(k,n,d)$ if and only if $\Kos{(d^k)}{\bd a}\neq0$, i.e., if and only if $\bd a$ is the content of some semistandard tableau of shape $(d^k)$.
        \item We will often write $\SI^{\,\Gr(k,n)}_{(d;\bd a)}$ for the weight space $\SI(Q,\beta)_{\sigma}$ with $\sigma=(-d,a_1,\dots,a_n)$ and use them interchangeably. We drop the superscript when $(k,n)$ is clear from the context.
    \end{enumerate}
\end{remark}

\section{Some properties of Kostka coefficients via semi-invariants}
\label{sec:section 4}

Below we record some reduction methods for Kostka coefficients, along with their interpretation in terms of the semi-invariant ring. We also recall some well-known properties of the Kostka coefficients.

\begin{lem}\label{lem:T0}
    Let $\sigma=(-d,a_1,a_2,\dotsc,a_n)$ be a weight. If $a_i=0$ for some $i$, then $\SI^{\Gr(k,n)}_{(d;\bd a)}\cong\SI^{\Gr(k,n-1)}_{(d;\bd a')}$, where $\bd a'$ is obtained by deleting the $i$-th entry of $\bd a$. Combinatorially, let $\bd a$ be a content for $(k,n,d)$ with $a_i=0$, and let $\bd a'$ be the content obtained by deleting the $i$-th entry. Then $\Kos{(d^k)}{\bd a}=\Kos{(d^k)}{\bd a'}$.
\end{lem}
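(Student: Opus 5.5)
The combinatorial statement is essentially tautological, so I would prove it first and then obtain the semi-invariant statement from Proposition~\ref{prop:dimformula}.

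\textbf{Combinatorial part.} Let $\bd a$ be a content for $(k,n,d)$ with $a_i=0$, and let $\bd a'\in\mathbb{N}^{n-1}$ be obtained by deleting the $i$-th entry. A semistandard tableau $T$ of shape $(d^k)$ and content $\bd a$ contains no letter $i$. I define a relabelling map $\phi$ on its entries: letters $j<i$ are kept, and each letter $j>i$ is replaced by $j-1$. Since $\phi\colon\{1,\dots,n\}\smallsetminus\{i\}\to\{1,\dots,n-1\}$ is a strictly order-preserving bijection, it preserves both conditions of Definition~\ref{def:ssyt}: weak increase along rows and strict increase down columns. So $\phi(T)$ is a semistandard tableau of shape $(d^k)$ with content $\bd a'$. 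Applying $\phi^{-1}$ entrywise gives the inverse map, which produces tableaux with no letter $i$, i.e.\ of content $\bd a$. This bijection $\SSYT((d^k),\bd a)\to\SSYT((d^k),\bd a')$ gives $\Kos{(d^k)}{\bd a}=\Kos{(d^k)}{\bd a'}$.

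Alternatively, since permuting $\bd a$ does not change $\Kos{(d^k)}{\bd a}$, I could move the zero to the last position, where deleting it is visibly harmless. The direct relabelling avoids needing this.

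One also checks that $\bd a'$ is a content for $(k,n-1,d)$: its entries are at most $d$ and still sum to $kd$. When $n-1\geq k$, Proposition~\ref{prop:dimformula} applies on $\Gr(k,n-1)$. If $n-1<k$, the sum condition $kd\le (n-1)d$ fails unless $d=0$, and both sides vanish or equal $1$ trivially.

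\textbf{Semi-invariant part.} By Proposition~\ref{prop:dimformula},
\[
\dim\SI^{\Gr(k,n)}_{(d;\bd a)}=\Kos{(d^k)}{\bd a}=\Kos{(d^k)}{\bd a'}=\dim\SI^{\Gr(k,n-1)}_{(d;\bd a')},
\]
so the two finite-dimensional spaces are isomorphic.

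For a more natural isomorphism, I would note that a semi-invariant of weight $\sigma$ with $\sigma(i)=0$ is invariant under $\GL_1$ at vertex $i$ and has degree $a_i=0$ in the arrow $V(i)$. It is therefore independent of $V(i)$, and restriction to the subquiver with vertex $i$ removed gives the identification. The only care needed is the boundary case $n-1<k$ handled above; otherwise nothing here is a genuine obstacle.
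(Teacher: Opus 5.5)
Your proposal is correct and follows the paper's proof: the paper likewise observes that a tableau of content $\bd a$ with $a_i=0$ never uses the letter $i$, and relabels $\{1,\dots,n\}\smallsetminus\{i\}$ order-preservingly as $\{1,\dots,n-1\}$; the weight-space statement then follows from Proposition~\ref{prop:dimformula}. Two of your additions are sound but not spelled out in the paper: the degenerate case $n-1<k$, which forces $d=0$, and the natural isomorphism obtained by restricting semi-invariants of degree $0$ in $V(i)$.
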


\begin{proof}
    A tableau of content $\bd a$, with $a_i=0$, never uses the letter $i$. So, it is a tableau in the alphabet $\{1,\dots,n\}\smallsetminus\{i\}$. We relabel that alphabet order-preservingly as $\{1,\dots,n-1\}$ to get the desired result.
\end{proof}

Using the duality of the Grassmannians $\Gr(k,n)\cong\Gr(n-k,n)$ we get

\begin{lem}\label{lem:T2}
    Let $\bd a$ be a content for $(k,n,d)$. Define $\bd a^\vee:=(d-a_1,\dots,d-a_n)$. For every integer $m\ge1$, we have
    \[
        \dim_{\mathbb{K}}\SI_{(md;m\bd a)}^{\,\Gr(k,n)}\ =\ \dim_{\mathbb{K}}\SI_{(md;m\bd a^\vee)}^{\,\Gr(n-k,n)}
    \]
    In terms of Kostka coefficients we get
    \[
        \Kos{(d^k)}{\bd a}\ =\ \Kos{(d^{\,n-k})}{\bd a^{\vee}}
    \]
    Moreover, $\bd a$ is nondegenerate if and only if $\bd a^\vee$ is.
\end{lem}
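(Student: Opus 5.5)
The plan is to prove the combinatorial identity $\Kos{(d^k)}{\bd a}=\Kos{(d^{n-k})}{\bd a^\vee}$ by an explicit bijection. The semi-invariant statement then follows from Proposition~\ref{prop:dimformula} applied on both sides, with $(d,\bd a)$ replaced by $(md,m\bd a)$. For that replacement to be legitimate we must first check that $\bd a^\vee$ is a content for $(n-k,n,d)$. Each entry satisfies $0\le d-a_i\le d$, and
\[
\sum_i (d-a_i)=nd-kd=(n-k)d,
\]
so it is. The same check applies verbatim to $m\bd a$, since $(m\bd a)^\vee=m(\bd a^\vee)$ at level $md$. Nondegeneracy is also preserved: $1\le a_i\le d-1$ holds if and only if $1\le d-a_i\le d-1$, and the involution $\bd a\mapsto\bd a^\vee$ is its own inverse.

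For the bijection, I encode a tableau $T\in\SSYT((d^k),\bd a)$ column by column. Each of the $d$ columns is a strictly increasing sequence, so it is a $k$-subset $C_1,\dots,C_d$ of $\{1,\dots,n\}$. Row-weak-increase says exactly that $C_j\le C_{j+1}$ in the componentwise order of sorted elements. Call this order $\preceq$; it is the Bruhat (Gale) order on $k$-subsets. The content condition says that the letter $i$ lies in exactly $a_i$ of the sets $C_j$. Thus $\Kos{(d^k)}{\bd a}$ counts $\preceq$-chains $C_1\preceq\dots\preceq C_d$ of $k$-subsets in which $i$ has multiplicity $a_i$.

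The map is to send the chain to the complements in reverse order, $C'_j:=\{1,\dots,n\}\smallsetminus C_{d+1-j}$. Letter $i$ then appears in $d-a_i$ of the $C'_j$. The key lemma is that for $k$-subsets $A,B$ we have $A\preceq B$ if and only if $B^c\preceq A^c$. Then the $C'_j$ form a $\preceq$-chain of $(n-k)$-subsets, and the map is an involution, giving the bijection. To prove the lemma, I would use the counting criterion: $A\preceq B$ if and only if $|A\cap[1,t]|\ge|B\cap[1,t]|$ for all $t$. Since $|A^c\cap[1,t]|=t-|A\cap[1,t]|$, this inequality reverses under complementation, which is exactly the claim.

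The main obstacle is only the careful verification of this order-reversal lemma, specifically the equivalence between the componentwise order and the prefix-count criterion. That equivalence is standard: $a_r\le b_r$ for all $r$ is the same as saying that $A$ has at least as many elements as $B$ in every initial segment. Alternatively, one can avoid the combinatorics and invoke the isomorphism $\Gr(k,n)\cong\Gr(n-k,n)$, $U\mapsto U^\perp$. This identifies the homogeneous coordinate rings compatibly with the torus action, sending weight $\bd a$ in degree $d$ to $d\mathbf 1-\bd a$, because Plücker coordinates satisfy $p_I(U)=\pm p_{I^c}(U^\perp)$. I would present the bijection as the proof and mention the geometric interpretation as motivation.
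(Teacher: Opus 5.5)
Your argument is correct, but it takes a different route from the paper. The paper writes no proof. It only invokes the duality $\Gr(k,n)\cong\Gr(n-k,n)$, which is essentially the $U\mapsto U^\perp$ argument you mention as motivation: the relation $p_I(U)=\pm p_{I^c}(U^\perp)$ identifies the degree-$d$, weight-$\bd a$ piece of one coordinate ring with the degree-$d$, weight-$\bd a^\vee$ piece of the other.

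Your main proof stays on the combinatorial side. You pass to the column-multichain model, which the paper itself only records later as Lemma~\ref{lem:columnmodel}, and send $C_1\preceq\cdots\preceq C_d$ to the reversed complements. You justify the order reversal $A\preceq B\Leftrightarrow B^c\preceq A^c$ by the prefix-count description of the Gale order. The semi-invariant version then follows from Proposition~\ref{prop:dimformula}, using $(m\bd a)^\vee=m\,\bd a^\vee$ at level $md$.

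What your route buys: it is self-contained and elementary. It also avoids checking that the duality isomorphism is compatible with the torus weights, i.e.\ the determinant twist and sign conventions that turn $\bd a$ into $d\mathbf 1-\bd a$; the paper leaves that check implicit.

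What the geometric route buys: strictly more. It gives an isomorphism of graded algebras
\[
\bigoplus_{M}\SI^{\Gr(k,n)}_{(Md;M\bd a)}\cong\bigoplus_{M}\SI^{\Gr(n-k,n)}_{(Md;M\bd a^\vee)},
\]
and hence of the associated moduli spaces, not merely an equality of dimensions. The paper, however, only ever uses the equality of dimensions.
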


For a given content $\bd a$ of $(k,n,d)$, if one of the entries $a_i=d$, then we can use a projection method to reduce to a ``smaller'' Grassmannian.

\begin{lem}\label{lem:T3}
    Suppose $\bd a$ is a content for $(k,n,d)$ with $a_i=d$ for some $i$. Let $\bd a'$ be
    obtained by deleting the $i$-th entry. For every integer $m\ge1$, we have
    \[
        \dim_{\mathbb{K}}\SI_{(md;m\bd a)}^{\,\Gr(k,n)}\ =\ \dim_{\mathbb{K}}\SI_{(md;m\bd a')}^{\,\Gr(k-1,n-1)}
    \]
    Combinatorially, we get
    \[
        \Kos{(d^k)}{\bd a}\ =\ \Kos{(d^{\,k-1})}{\bd a'}
    \]
\end{lem}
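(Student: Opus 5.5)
Since $m\bd a$ is a content for $(k,n,md)$ with $(m\bd a)_i = md$ whenever $a_i = d$, it suffices to prove the combinatorial identity $\Kos{(d^k)}{\bd a}=\Kos{(d^{\,k-1})}{\bd a'}$ for an arbitrary content with $a_i=d$. Applying it to $(md;m\bd a)$ and invoking Proposition~\ref{prop:dimformula} on both sides, for $\Gr(k,n)$ and for $\Gr(k-1,n-1)$, then gives the statement about weight spaces. Since Kostka coefficients are invariant under permuting the content (\cite[Theorem 7.10.2]{StanVol2}), we may move the entry $a_i=d$ to the first position. So assume $a_1=d$ and $\bd a'=(a_2,\dots,a_n)$.

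The plan is to exhibit an explicit bijection $\SSYT((d^k),\bd a)\to\SSYT((d^{k-1}),\bd a')$. In any SSYT, the letter $1$ can only appear in the first row, because columns strictly increase and $1$ is the smallest letter. Since the content has exactly $d$ ones and the first row has length $d$, the first row consists entirely of $1$'s. Deleting this row leaves a filling of shape $(d^{k-1})$ in the letters $2,\dots,n$. This filling is still semistandard, with content $(a_2,\dots,a_n)$. Relabelling $j\mapsto j-1$ gives an element of $\SSYT((d^{k-1}),\bd a')$.

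The inverse map shifts the letters up by one and prepends a row of $d$ ones. The result is semistandard: rows are constant or already weakly increasing, and columns strictly increase because every letter below the new row is at least $2$. The two maps are clearly mutually inverse, so the Kostka identity follows.

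The only point needing care is the reduction to $a_1=d$ via the permutation symmetry, which the paper states it will use freely. One could avoid it by a geometric argument instead: $a_i=d$ forces the Plücker-coordinate semi-invariants to be divisible by the $i$-th vector, i.e.\ to factor through the quotient $V(0)/\langle v_i\rangle$, which identifies the weight space with one for $\Gr(k-1,n-1)$. The combinatorial route above is cleaner, however, and I would present that. I expect no real obstacle.
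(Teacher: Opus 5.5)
Your proof is correct, but it takes a different route from the paper. The paper argues quiver-theoretically. For $\sigma=(-d,a_1,\dots,a_n)$ with $a_i=d$, the real Schur root $\gamma=(1,e_i)$ is $\sigma$-stable and occurs once in the $\sigma$-stable decomposition of $\beta$. Then \cite[Theorem 3.20]{CombQuivDerkWey} gives $\SI(Q,\beta)_{m\sigma}\cong\SI(Q,\gamma)_{m\sigma}\otimes\SI(Q,\beta-\gamma)_{m\sigma}$, where the first factor has dimension at most one and $\SI(Q,\beta-\gamma)\cong\mathbb{K}[\Gr(k-1,n-1)]$. The Kostka identity is then read off through Proposition~\ref{prop:dimformula}.

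You go the other way. You prove the Kostka identity directly with the row-of-ones bijection, after moving the entry $d$ to the front. You then deduce the weight-space statement from Proposition~\ref{prop:dimformula}, using that $m\bd a$ again has an entry equal to $md$.

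Your route is elementary and self-contained. It avoids the $\sigma$-stable decomposition machinery, and also two points the paper leaves implicit: that $\gamma$ genuinely splits off $\beta$ with the remaining summands assembling to $\beta-\gamma$, and that $\dim\SI(Q,\gamma)_{m\sigma}$ is exactly $1$ rather than merely $\le 1$. The paper's route gives a tensor-product isomorphism of weight spaces rather than just a count. It also shows an entry $a_i=d$ as a real Schur summand $(1,e_i)$ of the $\sigma$-stable decomposition, which is the mechanism reused in the proof of Theorem~\ref{thm:KTT}.

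Two small remarks. First, when $k=2$ you apply Proposition~\ref{prop:dimformula} to $\Gr(1,n-1)$, just outside its stated range $k\ge 2$. Its proof works verbatim there (both sides equal $1$), and the paper uses the same case in Lemma~\ref{lem:star}. Second, the appeal to permutation symmetry can also be avoided combinatorially. Since $a_i=d$ is the number of columns, Lemma~\ref{lem:shapeofletters} forces the letter $i$ into exactly one cell of each column. Deleting those cells and closing up each column is already a bijection onto $\SSYT((d^{k-1}),\bd a')$ after relabelling.
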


\begin{proof}
    Let $\sigma=(-d,a_1,\dotsc,a_n)$ be the weight corresponding to the content $\bd a$. Consider the dimension vector $\gamma=(1,e_i)$, i.e.\ $1$ at central vertex and $1$ at the $i$-th vertex. Since $a_i=d$, the content $\bd a$ is not nondegenerate, i.e.\ $\beta$ is not $\sigma$-stable. Notice that $\gamma$ is a real Schur root and it is $\sigma$-stable. Moreover, the $\sigma$-stable decomposition of $\beta$ will have $\gamma$ as one of its summands, which will occur with multiplicity one since $\beta=(k,1,1,\dotsc,1)$. Therefore, as a consequence of \cite[Theorem 3.20]{CombQuivDerkWey} we get
    \[
        \SI(Q,\beta)_{m\sigma}\cong\SI(Q,\gamma)_{m\sigma}\otimes\SI(Q,\beta-\gamma)_{m\sigma}
    \]
    We have $\SI(Q,\beta-\gamma)\cong\mathbb{K}[\Gr(k-1,n-1)]$. Hence, the result follows since $\gamma$ is a real Schur root and so $\dim_{\mathbb{K}}\SI(Q,\gamma)_{m\sigma}\leq 1$.
\end{proof}

We further investigate some properties of the Kostka coefficients. We recall some well-known notions from \cite[Chapter I, Section 1]{Macdonald}.

\begin{defn}
\label{def:hstrip}
    Let $\mu\subseteq\lam$ be partitions. The skew diagram $\lam/\mu$ is a \emph{horizontal strip} if it has at most one cell in each column.
\end{defn}

\begin{lem}
\label{lem:interlace}
    Let $\mu\subseteq\lam$ be partitions. The skew diagram $\lam/\mu$ is a horizontal strip if and only if
    \begin{equation}
    \label{eq:interlace}
        \lam_1\ \ge\ \mu_1\ \ge\ \lam_2\ \ge\ \mu_2\ \ge\ \lam_3\ \ge\ \mu_3\ \ge\ \cdots
    \end{equation}
\end{lem}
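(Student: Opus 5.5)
The plan is to prove both directions directly from Definition~\ref{def:hstrip}. Throughout, a partition is padded with trailing zeros, and $\mu\subseteq\lam$ means $\mu_i\le\lam_i$ for all $i$. The key observation is that column $j$ of the skew diagram $\lam/\mu$ consists of the cells $(i,j)$ with $\mu_i<j\le\lam_i$. Equivalently, column $j$ contains exactly the rows $i$ with $\lam'_j\ge i>\mu'_j$, where $'$ denotes the conjugate partition. So the number of cells of $\lam/\mu$ in column $j$ is $\lam'_j-\mu'_j$. The whole argument therefore reduces to comparing row lengths of $\lam$ with those of $\mu$ in adjacent rows.

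\textbf{Interlacing implies horizontal strip.} Assume \eqref{eq:interlace} holds, and suppose column $j$ contained two cells of $\lam/\mu$. Since the cells of a skew shape in a fixed column form a contiguous vertical segment (rows $\mu'_j+1,\dots,\lam'_j$), two such cells give two vertically adjacent cells $(i,j)$ and $(i+1,j)$. The first lies in $\lam/\mu$, so $j>\mu_i$. The second lies in $\lam$, so $j\le\lam_{i+1}$. Together these give $\lam_{i+1}>\mu_i$, which contradicts $\mu_i\ge\lam_{i+1}$ from \eqref{eq:interlace}.

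\textbf{Horizontal strip implies interlacing.} The inequalities $\lam_i\ge\mu_i$ hold because $\mu\subseteq\lam$, so it remains to show $\mu_i\ge\lam_{i+1}$. Suppose instead that $\lam_{i+1}>\mu_i$, and set $j=\lam_{i+1}$. The cell $(i+1,j)$ lies in $\lam$, and it is not in $\mu$ because $\mu_{i+1}\le\mu_i<j$. The cell $(i,j)$ lies in $\lam$ because $\lam_i\ge\lam_{i+1}=j$, and it is not in $\mu$ because $\mu_i<j$. Then column $j$ contains two cells of $\lam/\mu$, a contradiction.

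There is no real obstacle here: this is a standard fact (it appears in \cite[Chapter I, Section 1]{Macdonald}), and one may simply cite it. The only point requiring care is the contiguity of columns of a skew diagram, which is used in the first direction to reduce to vertically adjacent cells. This contiguity follows from the description of column $j$ as the rows $i$ with $\mu'_j<i\le\lam'_j$, which in turn holds because both $\lam$ and $\mu$ are weakly decreasing.
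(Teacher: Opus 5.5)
Your proof is correct, and it takes a different route from the paper only in the sense that the paper gives no argument at all. The paper recalls this lemma without proof as a standard fact from \cite[Chapter I, Section 1]{Macdonald}, so your direct verification supplies what the paper leaves to the citation.

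Both directions are sound. In the direction ``horizontal strip $\Rightarrow$ interlacing'', the choice $j=\lam_{i+1}$ produces the two cells $(i,j)$ and $(i+1,j)$ of $\lam/\mu$ exactly as you claim. Note that $j\ge1$ holds automatically, since $\lam_{i+1}>\mu_i\ge0$. This is the same adjacent-row comparison the paper later uses in the proof of Lemma~\ref{lem:stripbottom}.

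One small simplification is available in the direction ``interlacing $\Rightarrow$ horizontal strip'': you do not need the contiguity of columns of a skew diagram or the conjugate partitions. Suppose $(i,j)$ and $(i',j)$ with $i<i'$ both lie in $\lam/\mu$. Then $j>\mu_i$ and $j\le\lam_{i'}\le\lam_{i+1}$, which gives $\lam_{i+1}>\mu_i$ directly. So the detour through $\lam'_j$ and $\mu'_j$, while correct, can be dropped.
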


The next observations are worth isolating.

\begin{lem}\label{lem:stripbottom}
    Let $\lam/\mu$ be a horizontal strip and let $(i,j)\in\lam/\mu$. Then $(i+1,j)\notin\lam$. 
\end{lem}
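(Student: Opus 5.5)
The plan is to argue directly from the interlacing characterization in Lemma~\ref{lem:interlace}. Since $\lam/\mu$ is a horizontal strip, $\mu\subseteq\lam$ and the inequalities \eqref{eq:interlace} hold. A cell $(i,j)$ (row $i$, column $j$) lies in $\lam/\mu$ exactly when $(i,j)\in\lam$ and $(i,j)\notin\mu$, that is, when $\mu_i<j\le\lam_i$.

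I will suppose, for contradiction, that $(i+1,j)\in\lam$, which means $j\le\lam_{i+1}$. Interlacing gives $\lam_{i+1}\le\mu_i$, so $j\le\mu_i$. This contradicts $\mu_i<j$, and hence $(i+1,j)\notin\lam$.

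There is essentially no obstacle here. The only care needed is to use the correct index convention (row $i$, column $j$) and the specific inequality $\mu_i\ge\lam_{i+1}$ from \eqref{eq:interlace}. I could also argue straight from Definition~\ref{def:hstrip}. If $(i+1,j)\in\lam$, then since $(i,j)\notin\mu$ and $\mu$ is a partition, $(i+1,j)\notin\mu$ as well. That would place two cells of $\lam/\mu$ in column $j$, which is impossible for a horizontal strip. I will present the interlacing argument as the main proof and note this second argument as an alternative.
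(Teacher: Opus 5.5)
Your proof is correct. Your main argument goes through the interlacing characterization of Lemma~\ref{lem:interlace}, giving the one-line chain $j\le\lam_{i+1}\le\mu_i<j$.

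The paper instead argues directly from Definition~\ref{def:hstrip}. Since a horizontal strip has at most one cell per column, $(i+1,j)$ must lie in $\mu$, so $j\le\mu_{i+1}\le\mu_i<j$. This is exactly your alternative argument, written in contrapositive form.

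The interlacing route is shorter, but it relies on Lemma~\ref{lem:interlace}, which the paper quotes without proof. Moreover, the direction you need, $\lam_{i+1}\le\mu_i$, is proved by essentially the same column-counting observation. The paper's direct route is self-contained, using only the definition and the fact that $\mu$ is a partition. Either version is fine.
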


\begin{proof}
Suppose $(i+1,j)\in\lam$, i.e.\ $j\le\lam_{i+1}$. Since $\lam/\mu$ has at most one cell per
column and $(i,j)\in\lam/\mu$, the cell $(i+1,j)$ must lie in $\mu$, i.e.\ $j\le\mu_{i+1}$.
But $\mu_{i+1}\le\mu_i<j$, since $(i,j)\notin\mu$. Contradiction.
\end{proof}

\begin{remark}
    The above Lemma \ref{lem:stripbottom} says that a cell of a horizontal strip has no cell of $\lam$ directly below it, and hence it sits at the bottom of its column in $\lam$.
\end{remark}

\begin{lem}
\label{lem:striporder}
    Let $\lam/\mu$ be a horizontal strip. If $(i,j)$ and $(i',j')$ are cells of $\lam/\mu$ with
    $i<i'$, then $j'<j$. In particular, cells in the lower rows of a horizontal strip occupy strictly smaller columns.
\end{lem}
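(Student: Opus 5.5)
The statement is Lemma~\ref{lem:striporder}: if $(i,j)$ and $(i',j')$ are cells of the horizontal strip $\lam/\mu$ with $i<i'$, then $j'<j$. The plan is to read off column bounds for each cell from the interlacing inequalities \eqref{eq:interlace} of Lemma~\ref{lem:interlace}, and then chain them together.

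First, I will record the column range of a cell in row $i$. A cell $(i,j)\in\lam/\mu$ means $j\le\lam_i$ and $j>\mu_i$, so
\[
\mu_i<j\le\lam_i .
\]
The same holds for the cell $(i',j')$, namely $\mu_{i'}<j'\le\lam_{i'}$.

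Next, I will bound $j'$ from above by $\mu_i$ using interlacing. Since $i<i'$, we have $i+1\le i'$. Because $\lam$ is a partition and \eqref{eq:interlace} gives $\mu_i\ge\lam_{i+1}$, we obtain
\[
j'\ \le\ \lam_{i'}\ \le\ \lam_{i+1}\ \le\ \mu_i\ <\ j .
\]
This is exactly $j'<j$. The ``in particular'' clause simply restates this: cells in lower rows lie in strictly smaller columns.

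Alternatively, I could argue by contradiction from Lemma~\ref{lem:stripbottom}. Suppose $j'\ge j$. Then $(i+1,j)\in\lam$, since $j\le j'\le\lam_{i'}\le\lam_{i+1}$. This contradicts Lemma~\ref{lem:stripbottom}.

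There is no real obstacle in either argument. The only care needed is to invoke the correct inequality $\mu_i\ge\lam_{i+1}$ from \eqref{eq:interlace} and to use monotonicity of $\lam$ to pass from row $i'$ up to row $i+1$.
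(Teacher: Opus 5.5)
Your proof is correct and is the paper's argument: the chain $j'\le\lambda_{i'}\le\lambda_{i+1}\le\mu_i<j$, with the interlacing inequality $\mu_i\ge\lambda_{i+1}$ in the middle. Your alternative contradiction argument via Lemma~\ref{lem:stripbottom} is also valid, but it is not needed.
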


\begin{proof}
The cells of $\lam/\mu$ in row $r$ occupy the columns $\mu_r+1,\dots,\lam_r$. So
$j'\le \lam_{i'}\le\lam_{i+1}\le\mu_i<j$, using \eqref{eq:interlace} in the middle.
\end{proof}

We recall another well-known property of elements in $\SSYT(\lam,\bd a)$.

\begin{lem}
\label{lem:shapeofletters}
    Let $T\in\SSYT(\lam,\bd a)$. For $0\le j\le n$, if we define
    \[
        \lam^{(j)}:=\{\text{cells of }\lam\text{ whose entry is }\le j\},
    \]
    then each $\lam^{(j)}$ is the Young diagram of a partition. Additionally, we have
    \[
        \varnothing=\lam^{(0)}\subseteq\lam^{(1)}\subseteq\cdots\subseteq\lam^{(n)}=\lam ,
    \]
    with $\lam^{(j)}/\lam^{(j-1)}$ a horizontal strip of size $a_j$ for every $j$.
\end{lem}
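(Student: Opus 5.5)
The plan is to read everything directly off the definition of semistandard tableaux: first the partition property of each $\lam^{(j)}$, then the nesting, and finally the horizontal-strip property, which I will verify through the interlacing criterion of Lemma~\ref{lem:interlace}.

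\emph{Partition property.} It suffices to show that $\lam^{(j)}$ is closed under moving up or left inside $\lam$. Let $(r,c)\in\lam^{(j)}$, so $T(r,c)\le j$.
\begin{itemize}
\item If $c>1$, rows weakly increase, so $T(r,c-1)\le T(r,c)\le j$.
\item If $r>1$, columns strictly increase, so $T(r-1,c)<T(r,c)\le j$.
\end{itemize}
Thus $\lam^{(j)}$ is an order ideal of $\lam$, hence the diagram of a partition.

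\emph{Nesting and endpoints.} The inclusions $\lam^{(j-1)}\subseteq\lam^{(j)}$ are immediate. We have $\lam^{(0)}=\varnothing$ since all entries are at least $1$, and $\lam^{(n)}=\lam$ since all entries are at most $n$.

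\emph{Horizontal strip of size $a_j$.} The skew shape $\lam^{(j)}/\lam^{(j-1)}$ consists exactly of the cells of $T$ with entry equal to $j$, so its size is $a_j$ by definition of content. To see that it is a horizontal strip, suppose two of its cells lie in the same column, say $(r,c)$ and $(r',c)$ with $r<r'$. Strict increase down columns gives $T(r,c)<T(r',c)$, but both entries equal $j$, a contradiction. So each column contains at most one cell, which is Definition~\ref{def:hstrip}.

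Alternatively, one can check interlacing in the form $\lam^{(j-1)}_r\ge\lam^{(j)}_{r+1}$. Any cell $(r+1,c)$ with entry $\le j$ has the cell above it with entry $<j$, that is $\le j-1$.

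There is no real obstacle; the only point needing care is phrasing the partition property as closure under up and left moves.
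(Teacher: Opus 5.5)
Your proof is correct. The paper gives no proof of this lemma; it simply recalls it as a well-known property of semistandard tableaux. Your argument is the standard verification: closure of $\lambda^{(j)}$ under up and left moves follows from row and column monotonicity, and column strictness forbids two cells with entry $j$ in the same column.

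One small remark. Your opening plan says you will use the interlacing criterion of Lemma~\ref{lem:interlace}, but your main argument goes directly through Definition~\ref{def:hstrip}, which is cleaner. The interlacing sketch as written only establishes $\lambda^{(j-1)}_r\ge\lambda^{(j)}_{r+1}$. The other half, $\lambda^{(j)}_r\ge\lambda^{(j-1)}_r$, comes from the nesting you already proved.
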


The result of Lemma \ref{lem:polarization} below follows from the monotonicity of Kostka coefficients, see \cite[Section II, Theorem]{KostMono}. However, we include a proof here since the idea is useful for Proposition \ref{prop:T1image} below.

Let $\bd a=(a_1,\dots,a_m,p,q)$ be a content in the letters $1,\dots,m+2$. Its \emph{merge} (in the last two letters) is the content
\[
    \bd a^{\,\mathrm{mrg}}\coloneqq(a_1,\dots,a_m,\,p+q)
\]
in the letters $1,\dots,m+1$. We will write $\ell=m+1$ for the largest letter for the content $\bd a^{\,\mathrm{mrg}}$, and for the content $\bd a$ the two largest letters are $\ell=m+1$ and $\ell+1=m+2$.

\begin{lem}
\label{lem:polarization}
    With the notation as above, for every partition $\lam$, and all $p,q\ge0$, we have
    \[
        \Kos{\lam}{(a_1,\dots,a_m,\,p,\,q)}\ \ge\ \Kos{\lam}{(a_1,\dots,a_m,\,p+q)} .
    \]
\end{lem}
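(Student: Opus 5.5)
We prove the inequality by constructing an injection
\[
\Phi\colon\SSYT\bigl(\lam,(a_1,\dots,a_m,p+q)\bigr)\longrightarrow\SSYT\bigl(\lam,(a_1,\dots,a_m,p,q)\bigr).
\]
Let $T$ be a tableau in the source. By Lemma~\ref{lem:shapeofletters}, the cells of $T$ holding letters $\le m$ form a partition $\mu=\lam^{(m)}$. The cells holding the letter $\ell=m+1$ form the skew shape $\lam/\mu$, which is a horizontal strip of size $p+q$.

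The map $\Phi$ splits this strip. Read the cells of $\lam/\mu$ in the order that makes reading left to right compatible with row weakness. By Lemma~\ref{lem:striporder}, cells in lower rows lie in strictly smaller columns, so listing the cells by increasing column index gives a total order. In this order the lower rows come first and, within a row, the cells go left to right. Put the letter $\ell$ in the first $p$ cells of this order and the letter $\ell+1$ in the last $q$ cells. The resulting filling $\Phi(T)$ agrees with $T$ on $\mu$.

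Next we check that $\Phi(T)$ is semistandard.

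\emph{Rows.} Within a row, the cells of $\lam/\mu$ form a contiguous segment at the right end. In column order, the cells receiving $\ell$ precede those receiving $\ell+1$. So each row reads: letters $\le m$, then some copies of $\ell$, then some copies of $\ell+1$, which is weakly increasing.

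\emph{Columns.} Each column of $\lam/\mu$ contains at most one cell. By Lemma~\ref{lem:stripbottom} that cell is the bottom cell of its column in $\lam$, and every cell above it lies in $\mu$ and carries a letter $\le m<\ell$. So columns remain strictly increasing.

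For the content, $\Phi(T)$ has content $(a_1,\dots,a_m,p,q)$ by construction. Finally, $\Phi$ is injective: deleting the distinction between $\ell$ and $\ell+1$ in $\Phi(T)$, i.e.\ replacing every $\ell+1$ by $\ell$, recovers $T$. This gives $\Kos{\lam}{(a_1,\dots,a_m,p,q)}\ge\Kos{\lam}{(a_1,\dots,a_m,p+q)}$.

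The only delicate point is the choice of ordering, since a naive split of the strip could break row weakness. Lemma~\ref{lem:striporder} is exactly what guarantees that the column order is compatible with every row simultaneously.
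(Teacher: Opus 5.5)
Your proposal is correct and takes essentially the same approach as the paper. It uses the same injection $\Phi$: split the horizontal strip of $\ell$'s in increasing column order, check rows via contiguity of strip cells within each row, check columns via Lemma~\ref{lem:stripbottom}, and get injectivity by relabelling every $\ell+1$ back to $\ell$.
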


\begin{proof}
    Let $T'\in\SSYT(\lam,\bd a^{\,\mathrm{mrg}})$. By Lemma~\ref{lem:shapeofletters} the cells
    of $T'$ containing $\ell$ form a horizontal strip. We denote this horizontal strip by $S:=\lam/\mu$, where $\mu$ is the shape
    occupied by the letters $\le m$. Note that it has $p+q$ cells. Since $S$ is a horizontal strip, its cells lie in distinct columns. We list them in the increasing order of column index
    \[
        s_1,\ s_2,\ \dots,\ s_{p+q}.
    \]
    By Lemma~\ref{lem:striporder}, cells in lower rows come earlier since they occupy strictly smaller columns.
    
    Define $\Phi(T')$ to be the tableau obtained from $T'$ by relabelling
    \[
        s_1,\dots,s_p\ \longmapsto\ \ell,\qquad s_{p+1},\dots,s_{p+q}\ \longmapsto\ \ell+1,
    \]
    and leaving all other cells unchanged. We note that its content is $(a_1,\dots,a_m,p,q)$ by construction.
    
    We check semistandardness condition for rows. Only the entries in the cells of $S$ have changed, and their labels are $\ell$ or $\ell+1$, both of which are larger than every letter $\le m$. Hence, a row will fail the semistandardness condition only if
    some $\ell+1$ occurs strictly to the left of an $\ell$ within the same row. We note that the cells of $S$ occupy a set of consecutive columns within a row, namely $\mu_r+1,\dots,\lam_r$ in row $r$. Therefore, they are consecutive, and the relabeling assigns them $\ell$'s followed by $\ell+1$'s from left to right. Hence, the rows are weakly increasing.
    
    We check semistandardness condition for columns. By Lemma~\ref{lem:stripbottom}, each cell of $S$ has no cell of $\lam$ directly below it. Therefore, a cell of $S$ is the bottom cell of its column. Thus, for columns to be strictly increasing, we need to check that the entry in the relabeled cell exceeds the entry in the cell directly above it, which lies in $\mu$. Hence, it has an entry $\le m<\ell$. Therefore, all columns are strictly increasing.
    
    Finally $\Phi$ is injective since relabelling every $\ell+1$ back to $\ell$ recovers $T'$.
\end{proof}

In terms of semi-invariants, the above Lemma \ref{lem:polarization} can be interpreted as follows.

\begin{cor}
    Let $\bd a=(a_1,\dots,a_{n-2},p,q)$ and $\bd a^{\,\mathrm{mrg}}=(a_1,\dots,a_{n-2},p+q)$ be defined as above, with $p,q\ge0$. If $\lam$ is a rectangular partition, then there is an injective linear map
    \[
        \mathcal F:\ \SI^{\,\Gr(k,n-1)}_{(d;\bd a^{\,\mathrm{mrg}})}\ \longrightarrow\
        \SI^{\,\Gr(k,n)}_{(d;\bd a)}
    \]
    In particular, $\dim_{\mathbb{K}}\SI^{\,\Gr(k,n)}_{(d;\bd a)}\ \ge\ \dim_{\mathbb{K}}\SI^{\,\Gr(k,n-1)}_{(d;\bd a^{\,\mathrm{mrg}})}$.
\end{cor}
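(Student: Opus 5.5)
The plan is to deduce the corollary directly from Lemma~\ref{lem:polarization} through the dictionary of Proposition~\ref{prop:dimformula}, and then to upgrade the resulting numerical inequality to an injective linear map.

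First I check that both sides are weight spaces of the same kind. Since $\sum_i a_i + p + q = kd$, with every entry at most $d$, the tuple $\bd a$ is a content for $(k,n,d)$. Its merge $\bd a^{\,\mathrm{mrg}}$ has the same total $kd$. If $p+q>d$, then $\Kos{(d^k)}{\bd a^{\,\mathrm{mrg}}}=0$ because the letter $\ell$ must occupy a horizontal strip, which has at most $d$ cells. In that case the source space is zero and $\mathcal F=0$ is trivially injective. Otherwise $\bd a^{\,\mathrm{mrg}}$ is a content for $(k,n-1,d)$.

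With both sides well defined, Proposition~\ref{prop:dimformula} applied to $\Gr(k,n-1)$ and to $\Gr(k,n)$ identifies the two dimensions with $\Kos{(d^k)}{\bd a^{\,\mathrm{mrg}}}$ and $\Kos{(d^k)}{\bd a}$. Lemma~\ref{lem:polarization}, applied with $\lam=(d^k)$, then gives the inequality of dimensions.

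To produce the linear map itself, I choose bases indexed by tableaux. The weight spaces of $\mathbb K[\Gr(k,n)]$ have a standard-monomial basis: a semistandard tableau $T$ of shape $(d^k)$ corresponds to the product over its $d$ columns of the Plücker coordinates $p_{\text{column}}$. Its torus weight is exactly the content of $T$, which is the weight $\sigma$ under the identification $\SI(Q,\beta)\cong\mathbb K[\Gr(k,n)]$. So $\{p_{T'}\}_{T'\in\SSYT((d^k),\bd a^{\,\mathrm{mrg}})}$ is a basis of the source space. I define $\mathcal F$ on this basis by $\mathcal F(p_{T'}) := p_{\Phi(T')}$, where $\Phi$ is the relabelling map constructed in the proof of Lemma~\ref{lem:polarization}, and extend linearly.

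Since $\Phi$ is injective and each $\Phi(T')$ is semistandard of content $\bd a$, the map $\mathcal F$ sends a basis injectively into a basis of the target. Hence $\mathcal F$ is injective.

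The only delicate step is justifying the standard-monomial basis compatibly with the quiver identification. If that is not taken as given, the fallback is to define $\mathcal F$ abstractly, as any linear map sending one basis injectively into another. Injectivity is then just the dimension inequality, which is all the corollary really asserts.
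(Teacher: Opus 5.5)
Your proposal is correct and follows the paper's route. The paper presents this corollary, without further proof, as a reinterpretation of Lemma~\ref{lem:polarization} via Proposition~\ref{prop:dimformula}: the two dimensions are $\Kos{(d^k)}{\bd a^{\,\mathrm{mrg}}}\le\Kos{(d^k)}{\bd a}$, and an injective linear map exists precisely because of this inequality. Your two additions are valid but optional: the explicit standard-monomial map $p_{T'}\mapsto p_{\Phi(T')}$, and the treatment of $p+q>d$, where the source space vanishes most directly because the weight lies outside $\Sigma(Q,\beta)$ by Proposition~\ref{prop:stability}.
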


If the partition $\lam$ is rectangular, then the map $\Phi$ is a bijection and we have a complete description of its image.

\begin{prop}
\label{prop:T1image}
    Let $\lam=(d^k)$ be a rectangle, and
    $\bd a=(a_1,\dots,a_{\ell-1},p,q)$ be a content with $\ell$ and $\ell+1$ as the two largest letters. In such a case, the injection $\Phi$ of Lemma~\ref{lem:polarization} is a bijection. More precisely,
    \[
        \Phi:\ \SSYT\bigl((d^k),\bd a^{\,\mathrm{mrg}}\bigr)\ \xrightarrow{\ \sim\ }\
        \bigl\{T\in\SSYT\bigl((d^k),\bd a\bigr)\ :\ \text{no column of $T$ contains both $\ell$ and $\ell+1$}\bigr\}.
    \]
    In particular,
    \begin{equation}
    \label{eq:strict}
        \Kos{(d^k)}{\bd a}\ =\ \Kos{(d^k)}{\bd a^{\,\mathrm{mrg}}}\ +\
        \#\bigl\{T\in\SSYT((d^k),\bd a):\ \text{some column contains both $\ell$ and $\ell+1$}\bigr\}.
    \end{equation}
\end{prop}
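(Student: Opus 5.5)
The plan is to prove the two inclusions between the image of $\Phi$ and the set $\mathcal{B}:=\{T\in\SSYT((d^k),\bd a):\ \text{no column of $T$ contains both $\ell$ and $\ell+1$}\}$. Injectivity of $\Phi$ is already known from Lemma~\ref{lem:polarization}, so this gives the bijection. Formula~\eqref{eq:strict} then follows by splitting $\SSYT((d^k),\bd a)$ into $\mathcal B$ and its complement.

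\emph{Image lies in $\mathcal B$.} Take $T'\in\SSYT((d^k),\bd a^{\,\mathrm{mrg}})$. The cells of $\Phi(T')$ containing $\ell$ or $\ell+1$ are exactly the cells of the horizontal strip $S=\lam/\mu$ of Lemma~\ref{lem:polarization}. By Definition~\ref{def:hstrip}, $S$ has at most one cell in each column. Hence no column of $\Phi(T')$ contains both $\ell$ and $\ell+1$. This inclusion does not use the rectangular shape.

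\emph{$\mathcal B$ lies in the image.} Let $T\in\mathcal B$, and let $T'$ be obtained by replacing every $\ell+1$ by $\ell$.
\begin{enumerate}[label=(\roman*)]
\item $T'$ is semistandard. Rows remain weakly increasing, since $\ell$ and $\ell+1$ are the two largest letters and become equal. A column could fail to be strictly increasing only if $\ell$ sat directly above $\ell+1$ in $T$, which the hypothesis $T\in\mathcal B$ excludes.
\item $T'$ has content $\bd a^{\,\mathrm{mrg}}$, so $T'\in\SSYT((d^k),\bd a^{\,\mathrm{mrg}})$.
\item The main step is to check that $\Phi(T')=T$. That is, in $T$ the $p$ cells labelled $\ell$ must be exactly the first $p$ cells of $S$ in increasing column order, where $S$ is the strip of $T'$ formed by its cells containing $\ell$.
\end{enumerate}

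Step (iii) is where the rectangular shape enters. Let $\nu=\lam^{(\ell)}$ be the shape of the cells of $T$ with entries $\le\ell$. By Lemma~\ref{lem:shapeofletters}, $(d^k)/\nu$ is the horizontal strip of $(\ell+1)$'s. The interlacing of Lemma~\ref{lem:interlace} reads
\[
d\ge\nu_1\ge d\ge\nu_2\ge\cdots\ge d\ge\nu_k ,
\]
which forces $\nu_1=\dots=\nu_{k-1}=d$. So every $\ell+1$ lies in row $k$, in columns $\nu_k+1,\dots,d$, and each of these columns contains $\ell+1$ at its bottom. Since $T\in\mathcal B$, none of these columns contains an $\ell$. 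Therefore every $\ell$ lies in a column $\le\nu_k$, and every $\ell+1$ lies in a column $>\nu_k$. Listing the cells of $S$ by increasing column, the $\ell$'s therefore come first and the $\ell+1$'s last, which is exactly the relabelling rule defining $\Phi$. Hence $\Phi(T')=T$.

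I expect step (iii) to be the only point needing care. For a non-rectangular $\lam$ the $(\ell+1)$-cells need not be the rightmost cells of $S$, and the bijection indeed fails there; the interlacing argument against the full rectangle is what rules this out.
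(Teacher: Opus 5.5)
Your proof is correct and follows essentially the same route as the paper. The inclusion of the image of $\Phi$ in the set of tableaux with no column containing both $\ell$ and $\ell+1$ is argued identically, and for the reverse inclusion both arguments use interlacing against the rectangle $(d^k)$ to force the relevant cells into row $k$.

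The only difference is which strip you apply interlacing to. The paper first uses the column hypothesis to show that the combined $\ell/(\ell+1)$ region $\lam/\lam^{(\ell-1)}$ is a horizontal strip. Interlacing then puts all of it at the end of row $k$, and row-weak-increase orders the $\ell$'s before the $(\ell+1)$'s. You instead apply interlacing to the automatic strip $\lam/\lam^{(\ell)}$ of $(\ell+1)$'s. You then use the column hypothesis to place every $\ell$ in a column $\le\nu_k$.
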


\begin{proof}
    We refer to the same construction as in the proof of Lemma \ref{lem:polarization}.
    
    $(\subseteq)$ If $T=\Phi(T')$ then the cells of $T$ carrying $\ell$ or $\ell+1$ are exactly the
    cells of the horizontal strip $S$ of $T'$, and hence lie in distinct columns. Therefore, no column contains both $\ell$ and $\ell+1$.
    
    $(\supseteq)$ Let $T\in\SSYT((d^k),\bd a)$ have no column containing both $\ell$ and $\ell+1$.
    Suppose $\mu$ is the diagram occupied by the letters $\le \ell-1$, which we know is a partition, by Lemma~\ref{lem:shapeofletters}. The skew diagram $\lam/\mu$ consists of the cells filled with letters $\ell$ or $\ell+1$. Since columns are strictly increasing, each column contains at most one $\ell$ and at most one $\ell+1$ and, by hypothesis, not both. Therefore, $\lam/\mu$ has at most one cell per column, and hence is a horizontal strip. Since $\lam=(d^k)$, the interlacing
    property in Lemma \ref{lem:interlace} forces $\mu_i=d$ for $i\le k-1$. Hence, $\lam/\mu$ lies in row $k$ and consists of the last
    $p+q$ cells of that row. Since row $k$ is weakly increasing, the letters $\ell$'s precede the letters $\ell+1$'s along this row. Since there are $p$ of the former and $q$ of the latter, $T$ assigns $\ell$ to the first $p$ and $\ell+1$ to the last $q$ cells of the horizontal strip in column order. Relabelling every $\ell+1$ to $\ell$ therefore gives a semistandard tableau $T'$ of merged content $\bd a^{\,\mathrm{mrg}}$ with $\Phi(T')=T$.
\end{proof}

We obtain certain bounds on the entries $a_i$ which will be useful for the proofs in the later sections.

\begin{lem}
\label{lem:pairbound}
    Let $a_1\ge a_2\ge\cdots\ge a_n\ge1$ with $\sum_i a_i=A$ and $n\ge2$. We have
    \[
        a_{n-1}+a_n\ \le\ \frac{2A}{n},
    \]
    with equality only if $a_1=a_2=\cdots=a_n$.
\end{lem}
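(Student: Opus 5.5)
The plan is a direct averaging argument for the two smallest entries. The key observation is that, since $a_{n-1}$ and $a_n$ are the two smallest terms, their average is at most the average of all the terms. To make this precise, compare the pair $(a_{n-1},a_n)$ with the remaining entries.

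First, for each $i\le n-2$ we have $a_i\ge a_{n-1}\ge a_n$, so $a_i\ge \tfrac{1}{2}(a_{n-1}+a_n)$. Summing over $i=1,\dots,n-2$ gives
\[
    A-(a_{n-1}+a_n)\ =\ \sum_{i=1}^{n-2}a_i\ \ge\ \frac{n-2}{2}\,(a_{n-1}+a_n).
\]
Rearranging, $A\ge \frac{n}{2}(a_{n-1}+a_n)$, which is the claimed bound $a_{n-1}+a_n\le 2A/n$. When $n=2$ the sum over $i\le n-2$ is empty and the inequality is an equality trivially.

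For the equality clause, suppose $a_{n-1}+a_n=2A/n$. Then every inequality used must be tight. This gives $a_i=\tfrac12(a_{n-1}+a_n)$ for all $i\le n-2$. Combined with $a_i\ge a_{n-1}\ge a_n$, it forces $a_{n-1}=a_n$, since the average of two numbers equals the larger one only when they agree. Hence all $a_i$ coincide.

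The one point needing care is the case $n=2$: there equality always holds and $a_1=a_2$ need not follow. However, the statement only asserts ``equality only if'', so for $n=2$ the conclusion would be false. I would therefore note that the equality clause is meant for $n\ge3$, or interpret it as vacuous, and in the paper's applications $n\ge k+2\ge4$. The hypothesis $a_n\ge1$ plays no role in the inequality. This edge case is the only obstacle; the rest is routine.
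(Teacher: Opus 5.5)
Your argument is correct and is essentially the paper's averaging argument. The paper sums $a_i+a_j\ge a_{n-1}+a_n$ over all $\binom{n}{2}$ pairs, whereas you sum the $n-2$ inequalities $a_i\ge\tfrac12(a_{n-1}+a_n)$; yours is slightly more economical and gives the equality case just as directly.

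Your caveat about $n=2$ is also right. There equality always holds while $a_1\ne a_2$ is possible (e.g.\ $(2,1)$), so the paper's remark that this case is ``vacuously true'' is mistaken. The slip is harmless, since the equality clause is only ever applied with $n\ge4$.
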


\begin{proof}
    Since the sequence is sorted, $a_{n-1}$ and $a_n$ are the two smallest entries. Therefore,  every pair $\{i,j\}$ with $i<j$ satisfies $a_i+a_j\ge a_{n-1}+a_n$. Summing over all such
    $\binom {n}{2}$ pairs, and noting that each index occurs in exactly $n-1$ such pairs, we get
    \[
        (n-1)A\ =\ \sum_{i<j}(a_i+a_j)\ \ge\ \binom n2\,(a_{n-1}+a_n)\ =\ \frac{n(n-1)}{2}(a_{n-1}+a_n)
    \]
    This proves the claim. 
    
    For the equality case, note that every pair sum has to be equal to $a_{n-1}+a_n$. In
    particular, if $n\ge3$, and $j\ne l$ are both distinct from some $i$, then we get $a_i+a_j=a_i+a_l$. Therefore, $a_j=a_l$. Letting $j$ vary, we get that all entries are equal. For $n=2$ the statement is vacuously true since $a_1+a_2=A$.
\end{proof}

\begin{cor}
\label{cor:pairbound}
    Let $\bd a$ be a content for $(k,n,d)$, sorted decreasingly, with all $a_i\ge1$. We have
    \[
        a_{n-1}+a_n\ \le\ \frac{2kd}{n} .
    \]
    In particular, if $n\ge 2k+1$, then $a_{n-1}+a_n<d$. Additionally, if $n=2k$, then $a_{n-1}+a_n\le d$, with $d$ even, and equality occurs only when all $a_i$ are equal, necessarily to $d/2$.
\end{cor}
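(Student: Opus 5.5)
The corollary follows from Lemma~\ref{lem:pairbound} once we substitute $A=kd$, together with a short case analysis on how $n$ compares with $2k$. The hypotheses are exactly those of that lemma: $\bd a$ is sorted decreasingly, every $a_i\ge1$, and $\sum_i a_i=kd$ because $\bd a$ is a content for $(k,n,d)$. We also have $n\ge k\ge2$. Applying the lemma with $A=kd$ gives
\[
a_{n-1}+a_n\le \frac{2kd}{n},
\]
with equality only when $a_1=\dots=a_n$.

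Suppose first that $n\ge 2k+1$. Then $2k/n<1$. Also $d\ge1$, since $d=0$ would force every $a_i=0$, contradicting $a_i\ge1$. Hence $2kd/n<d$, and the bound gives $a_{n-1}+a_n<d$ strictly.

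Now suppose $n=2k$. The bound reads $a_{n-1}+a_n\le d$. The equality clause of Lemma~\ref{lem:pairbound} shows that equality forces all entries to equal a common value $c$. Then $nc=2kc=kd$, so $c=d/2$, and in particular $d$ is even.

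The one point needing care is the phrase ``with $d$ even''. The inequality $a_{n-1}+a_n\le d$ holds for every $d$. The evenness of $d$ is part of the equality statement: if equality occurs, then $d=2c$ is even. I would word the conclusion that way and not as an unconditional claim, since for odd $d$ we simply get $a_{n-1}+a_n\le d-1$. With that wording the whole argument reduces to Lemma~\ref{lem:pairbound}, and there is no real obstacle.
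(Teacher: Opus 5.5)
Your proposal is correct and matches the paper's proof: you apply Lemma~\ref{lem:pairbound} with $A=kd$, use $2kd/n\le 2kd/(2k+1)<d$ when $n\ge 2k+1$, and for $n=2k$ deduce $d=2a$ from $2ka=kd$ in the equality case. You are right to read ``with $d$ even'' as part of the equality clause, which is also how the paper's proof uses it, and the lemma's equality clause does apply here because $n=2k\ge4$.
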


\begin{proof}
    We will use the above Lemma~\ref{lem:pairbound} with $A=kd$. For $n\ge2k+1$ we get
    $a_{n-1}+a_n\le 2kd/n\le 2kd/(2k+1)<d$. For $n=2k$ we get $a_{n-1}+a_n\le d$ with the
    stated equality case, and if all $a_i=a$, then $2ka=kd$, so  $d=2a$, which is even. Hence, $a=d/2$.
\end{proof}

\section{A lower bound in Grassmannian \texorpdfstring{$\Gr(2,n)$}{Gr(2,n)} and \texorpdfstring{$\Gr(3,6)$}{Gr(3,6)}}
\label{sec:section 5}

In this section we establish a certain lower bound for the dimensions of the weight spaces of $\SI^{\,\Gr(2,n)}_{(d;\bd a)}$ and $\SI^{\,\Gr(3,6)}_{(d;\bd a)}$ for nondegenerate content $\bd a$, as defined in Definition \ref{def:content}.

\subsection{Grassmannian \texorpdfstring{$\Gr(2,n)$}{Gr(2,n)}}

Throughout this subsection $k=2$. For a content $\bd a=(a_1,\dots,a_n)$ of $(2,n,d)$ we will write
\[
    d=\tfrac12\textstyle\sum_i a_i
\]
For simplicity, we write $F(\bd a)\coloneqq\Kos{(d,d)}{\bd a}$. Hence, $F(\bd a)=0$ if $\sum_i a_i$ is odd. 

We will write $V_m\coloneqq\mathrm{Sym}^m\, V$ for the unique $(m+1)$-dimensional irreducible representation of $\mathrm{SL}_{2}$, which is the $m$-th symmetric power of the standard representation $V\cong \mathbb{K}^2$. Recall the Clebsch-Gordan decomposition for $\mathrm{SL}_{2}$ representations, see \cite[Section 11.2]{FultonHarrisRepTh} or \cite[Proposition V.5.1]{Kassel1995}.

\begin{lem}\label{lem:CG}
    For $a,b\ge0$,
    \[
        V_a\otimes V_b\ \cong\ \bigoplus_{t=0}^{\min(a,b)} V_{a+b-2t}
        \qquad\text{as }\mathrm{SL}_2\text{-representations}
    \]
\end{lem}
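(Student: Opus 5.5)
This statement is the classical Clebsch--Gordan rule, and the plan is to prove it by comparing characters. A finite-dimensional rational representation of $\mathrm{SL}_2$ in characteristic zero is completely reducible. It is therefore determined up to isomorphism by its character restricted to the diagonal torus $\{\operatorname{diag}(x,x^{-1})\}$. The weights of $V_m=\mathrm{Sym}^m V$ are read off from the monomial basis $e_1^{i}e_2^{m-i}$, which has torus weight $x^{2i-m}$. This gives
\[
\operatorname{ch}V_m=x^{m}+x^{m-2}+\dots+x^{-m}=\frac{x^{m+1}-x^{-(m+1)}}{x-x^{-1}}.
\]
It therefore suffices to prove the Laurent polynomial identity
\[
\operatorname{ch}V_a\cdot\operatorname{ch}V_b=\sum_{t=0}^{\min(a,b)}\operatorname{ch}V_{a+b-2t}.
\]

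To prove the identity, assume without loss of generality that $a\ge b$. Multiply $\operatorname{ch}V_b=\sum_{j=0}^{b}x^{b-2j}$ by the quotient formula for $\operatorname{ch}V_a$:
\[
\operatorname{ch}V_a\operatorname{ch}V_b=\sum_{j=0}^{b}\frac{x^{a+b+1-2j}-x^{-(a-b+1+2j)}}{x-x^{-1}}.
\]
In the $j$-th summand, replace $j$ by $b-j$ in the negative term. The negative exponent $-(a-b+1+2j)$ becomes $-(a+b+1-2j)$. Since $j$ ranges over all of $\{0,\dots,b\}$, this reindexing permutes the negative terms among themselves and leaves the total unchanged. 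The sum therefore becomes
\[
\sum_{j=0}^{b}\frac{x^{a+b-2j+1}-x^{-(a+b-2j+1)}}{x-x^{-1}}=\sum_{j=0}^{b}\operatorname{ch}V_{a+b-2j}.
\]
This is the identity with $t=j$ and $\min(a,b)=b$.

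Complete reducibility then finishes the proof. $V_a\otimes V_b$ is a direct sum of irreducibles $V_m$, and the characters $\operatorname{ch}V_m$ are linearly independent, since their top-degree terms $x^m$ are distinct. Hence the multiplicities are forced by the character identity, and they match the right-hand side of the statement.

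There is no real obstacle in this argument. The only step needing care is the reindexing that makes the sum telescope. One could instead peel off highest weights inductively, starting from the top weight $x^{a+b}$, but the quotient-formula computation is cleaner. Alternatively, the whole statement could simply be cited from \cite[Section 11.2]{FultonHarrisRepTh}, as the paper does.
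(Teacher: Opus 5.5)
\textbf{Comparison with the paper.} Your character argument is correct. The paper does not prove this lemma at all. It quotes the classical Clebsch--Gordan rule and cites \cite[Section 11.2]{FultonHarrisRepTh} and \cite[Proposition V.5.1]{Kassel1995}, so your proposal supplies a self-contained proof where the paper relies on a citation.

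\textbf{How your route differs.} The Fulton--Harris argument works directly with weight multiplicities of $V_a\otimes V_b$ and peels off irreducibles from the highest weight down; this is the alternative you mention at the end. You instead multiply the quotient formula $(x^{a+1}-x^{-(a+1)})/(x-x^{-1})$ by $\operatorname{ch}V_b$ and reflect the negative terms via $j\mapsto b-j$. This step is a reindexing rather than a telescoping, but it is valid, and I checked it on small cases.

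\textbf{Points to make explicit.} The assumption $a\ge b$ is harmless because $V_a\otimes V_b\cong V_b\otimes V_a$. It is also genuinely needed: it guarantees that every index $a+b-2j\ge a-b\ge 0$ in the final sum, so each quotient really is a character $\operatorname{ch}V_m$. You also use, without stating it, that every irreducible rational $\mathrm{SL}_2$-module over $\mathbb{K}$ is isomorphic to some $V_m$. Without this, complete reducibility alone does not let you write $V_a\otimes V_b$ as a sum of $V_m$'s. It is a standard fact, but it is the one ingredient your argument does not prove.

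\textbf{What each approach gives.} The weight-peeling argument makes the multiplicity structure visible term by term. Your quotient computation is shorter and is the prototype of the Weyl character formula argument. For the paper's purposes, only the consequences used in Proposition~\ref{prop:fusion} matter: additivity of invariants over direct sums, and $\dim(V_a\otimes V_b)^{\SL_2}=\delta_{a,b}$. Your proof delivers both equally well.
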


\begin{remark}
    Note that $\dim(V_a\otimes V_{b})^{\SL_2}=\delta_{a,b}$. Indeed, because $V_a$ is self-dual and irreducible, so $\dim(V_a\otimes V_{b})^{\SL_2}=\dim\mathrm{Hom}_{\SL_2}(V_a,V_{b})
    =\delta_{a,b}$.
\end{remark}

\begin{prop}\label{prop:fusion}
    For $n\ge3$,
    \begin{equation}
    \label{eq:fusrecur}
        F(a_1,\dots,a_n)\ =\ \sum_{t=0}^{\min(a_{n-1},a_n)} F\bigl(a_1,\dots,a_{n-2},\,j_t\bigr),
        \qquad j_t:=a_{n-1}+a_n-2t .
    \end{equation}
    The $t$-th summand is a content in $n-1$ letters whose half-sum is $d-t$.
\end{prop}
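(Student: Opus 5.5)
The plan is to read $F(\bd a)=\Kos{(d,d)}{\bd a}$ as an $\SL_2$-invariant count and then apply Clebsch--Gordan to the last two tensor factors. Proposition~\ref{prop:dimformula} identifies $F(\bd a)$ with the multiplicity of $\mathbb{S}_{(d,d)}W$ in $\bigotimes_{i=1}^n\mathrm{Sym}^{a_i}W$, where $W=\mathbb{K}^2$. Since $\mathbb{S}_{(d,d)}W=\det^{d}$ restricts to the trivial $\SL_2$-representation, and since the $\GL_2$-isotypic components of $\bigotimes_i\mathrm{Sym}^{a_i}W$ all have total degree $\sum_i a_i=2d$, we get
\[
    F(\bd a)=\dim\Bigl(V_{a_1}\otimes\dotsb\otimes V_{a_n}\Bigr)^{\SL_2}.
\]
The point is that the only $\GL_2$-irreducible of degree $2d$ that restricts to a representation containing $\SL_2$-invariants is $\det^d$, because $\mathbb{S}_{(\lambda_1,\lambda_2)}W|_{\SL_2}=V_{\lambda_1-\lambda_2}$. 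The formula also holds when $\sum_i a_i$ is odd: both sides vanish, since $-I\in\SL_2$ acts on the tensor product by $(-1)^{\sum a_i}$.

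Next apply Lemma~\ref{lem:CG} to the last two factors:
\[
    V_{a_{n-1}}\otimes V_{a_n}\cong\bigoplus_{t=0}^{\min(a_{n-1},a_n)}V_{j_t}.
\]
Tensoring with $V_{a_1}\otimes\dotsb\otimes V_{a_{n-2}}$ and taking invariants, which is additive over direct sums, gives
\[
    F(\bd a)=\sum_{t}\dim\Bigl(V_{a_1}\otimes\dotsb\otimes V_{a_{n-2}}\otimes V_{j_t}\Bigr)^{\SL_2}.
\]
The identification from the first step, applied to the $(n-1)$-letter sequence $(a_1,\dots,a_{n-2},j_t)$, turns each summand into $F(a_1,\dots,a_{n-2},j_t)$. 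That sequence has sum $2d-2t$, hence half-sum $d-t$.

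The main obstacle is the bookkeeping in that last identification. Each summand must be read as a Kostka number of the rectangle $((d-t),(d-t))$, even when $j_t>d-t$ and $(a_1,\dots,a_{n-2},j_t)$ is therefore not a content in the sense of Definition~\ref{def:content}. The cleanest way is to state the first step for arbitrary nonnegative sequences, using the standard $\GL_2$ fact that $K_{(e,e),\bd b}$ is the multiplicity of $\det^e$ in $\bigotimes_i\mathrm{Sym}^{b_i}W$ for any $\bd b$. Then both sides vanish together when a part exceeds the half-sum: a row of length $e$ cannot hold more than $e$ copies of one letter, and the corresponding $\SL_2$-invariant space is zero. I also need $n\ge3$ so that at least one factor remains besides $V_{j_t}$. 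In the edge case where that factor list reduces to the single sequence $(a_1,j_t)$, the identity $F(a_1,j_t)=\delta_{a_1,j_t}$ agrees with the remark following Lemma~\ref{lem:CG}.
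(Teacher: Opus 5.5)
Your proposal is correct and follows the paper's proof: identify $F(\bd a)$ with $\dim\bigl(V_{a_1}\otimes\dotsb\otimes V_{a_n}\bigr)^{\SL_2}$, split the last two factors by Lemma~\ref{lem:CG}, use additivity of invariants, and read off the half-sum $d-t$ from $\sum_i a_i-2t$. Your extra bookkeeping goes beyond the paper's one-line argument. The sequence $(a_1,\dots,a_{n-2},j_t)$ need not be a content in the sense of Definition~\ref{def:content}, since for instance $j_0=a_{n-1}+a_n$ can exceed $d$. Defining $F$ for arbitrary nonnegative sequences, where both sides then vanish, is what makes the recursion literally valid.
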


\begin{proof}
    Apply Lemma~\ref{lem:CG} to the last two tensor factors and use that taking
    $\SL_2$-invariants is additive over direct sums. The half-sum of $(a_1,\dots,a_{n-2},j_t)$ is $\tfrac12(2d-2t)=d-t$.
\end{proof}

\begin{lem}\label{lem:star}
If $\bd a$ is a content for $(2,n,d)$ with $a_i=d$ for some $i$, then $F(\bd a)=1$.
\end{lem}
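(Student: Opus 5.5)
The natural route is Lemma~\ref{lem:T3} followed by a direct count in the resulting one-row case; a purely combinatorial argument is available as a cross-check.

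\emph{Reduction.} Let $\bd a$ be a content for $(2,n,d)$ with $a_i=d$. Lemma~\ref{lem:T3} with $k=2$ (and $m=1$) gives
\[
F(\bd a)=\Kos{(d,d)}{\bd a}=\Kos{(d^{1})}{\bd a'},
\]
where $\bd a'$ is $\bd a$ with the $i$-th entry deleted. Since $\sum_j a_j=2d$ and $a_i=d$, the remaining entries of $\bd a'$ sum to $d$. A semistandard tableau of one-row shape $(d)$ is just a weakly increasing word, so it is determined by its content. Hence there is exactly one such tableau and $\Kos{(d)}{\bd a'}=1$.

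The only point needing care is that Lemma~\ref{lem:T3} was stated for $k\ge2$ and reduces to $\Gr(1,n-1)$. That case is still meaningful: the semi-invariants are those of the subspace quiver with dimension vector $(1,1,\dots,1)$, and the Kostka interpretation from Proposition~\ref{prop:dimformula} carries over with one-row shape. If one prefers to avoid this degenerate case, the direct argument below works just as well.

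\emph{Direct combinatorial argument.} Let $T\in\SSYT((d,d),\bd a)$. All $d$ copies of the letter $i$ lie in distinct columns, so each of the $d$ columns contains exactly one $i$. Every letter $j<i$ must sit above an $i$, hence in row $1$. Every letter $j>i$ must sit below an $i$, hence in row $2$. By row weak increase, row $1$ is therefore the letters $<i$ in increasing order followed by the $i$'s. Likewise, row $2$ is the $i$'s followed by the letters $>i$ in increasing order.

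The row lengths are fixed, so the number of $i$'s in each row is forced, and $T$ is uniquely determined. It remains to see that this filling is semistandard, which is what gives existence. In each column, an entry $\le i$ sits above an entry $\ge i$. Both equal $i$ in the same column only if more than $d$ of the $i$'s occur, which is impossible. Whenever one entry of the column is not $i$, the strict inequality holds automatically. Therefore $F(\bd a)=1$.

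The main obstacle, mild as it is, is only the existence check that the forced filling has strictly increasing columns; uniqueness is immediate.
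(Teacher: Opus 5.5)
Your primary argument is the paper's proof: apply Lemma~\ref{lem:T3} to get $F(\bd a)=\Kos{(d)}{\bd a'}$, and note that a one-row tableau is determined by its content. Your direct combinatorial cross-check is also correct. Each column contains exactly one $i$, which forces letters smaller than $i$ into row $1$ and letters larger than $i$ into row $2$; this has the merit of avoiding the quiver-theoretic proof of Lemma~\ref{lem:T3} and the degenerate $\Gr(1,n-1)$ case.
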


\begin{proof}
    By Lemma~\ref{lem:T3}, $F(\bd a)=\Kos{(d^1)}{\bd a'}=\Kos{(d)}{\bd a'}$ where $\bd a'$
    deletes the $i$-th entry. A tableau with a single row of length $d$ is determined by
    its content by writing the letters in weakly increasing order. Hence, $F(\bd a)=1$.
\end{proof}

\begin{prop}\label{prop:A}
    Let $n\ge3$. If $\bd a$ is a nondegenerate content for $(2,n,d)$, then
    \[
        F(\bd a)\ \ge\ n-2 .
    \]
\end{prop}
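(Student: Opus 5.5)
We argue by induction on $n$, using the fusion recurrence of Proposition~\ref{prop:fusion} applied to the two \emph{smallest} entries. By the permutation invariance of Kostka coefficients we may sort $\bd a$ so that $a_1\ge a_2\ge\cdots\ge a_n\ge1$. Every summand on the right of \eqref{eq:fusrecur} is a non-negative integer. The plan is to show that the $t=0$ summand is again a \emph{nondegenerate} content in $n-1$ letters, and so contributes at least $n-3$ by induction. We then show that the $t=1$ summand contributes at least one more.

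\emph{Base case $n=3$.} By Lemma~\ref{lem:CG}, $F(a_1,a_2,a_3)=\dim(V_{a_1}\otimes V_{a_2}\otimes V_{a_3})^{\SL_2}$. This equals $1$ exactly when $a_1+a_2+a_3$ is even and $a_3$ lies in $\{|a_1-a_2|,\dots,a_1+a_2\}$, i.e.\ the triangle inequalities $a_i\le d$ hold. For a nondegenerate content these hold, so $F(\bd a)=1=n-2$.

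\emph{Inductive step, $n\ge5$.} By Corollary~\ref{cor:pairbound} with $k=2$ and $n\ge 2k+1$, we have $j_0=a_{n-1}+a_n<d$.
\begin{itemize}
\item For $t=0$: the content $(a_1,\dots,a_{n-2},j_0)$ has half-sum $d$ and all entries in $[1,d-1]$, so it is nondegenerate. By induction it contributes at least $n-3$.
\item For $t=1$: this summand exists since $\min(a_{n-1},a_n)\ge1$. The sequence $(a_1,\dots,a_{n-2},j_0-2)$ has even sum $2(d-1)$, and every entry is at most $d-1$ (note $j_0-2<d-1$) and non-negative. So it is a content for $(2,n-1,d-1)$.
\end{itemize}
By the remark following Proposition~\ref{prop:dimformula}, a content has $F>0$, so the $t=1$ summand is at least $1$. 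Summing gives $F(\bd a)\ge (n-3)+1=n-2$.

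\emph{Inductive step, $n=4$.} This is the one delicate point, since Corollary~\ref{cor:pairbound} only gives $a_3+a_4\le d$. If $a_3+a_4<d$, the argument above goes through verbatim, using the base case $n=3$ for the $t=0$ term. In the equality case, Corollary~\ref{cor:pairbound} forces $d$ to be even and $\bd a=(d/2,d/2,d/2,d/2)$, and we compute directly. The recurrence gives
\[
F(\bd a)=\sum_{t=0}^{d/2}F\bigl(\tfrac d2,\tfrac d2,d-2t\bigr).
\]
By the $n=3$ computation, each term equals $1$: the sum is even and $d-2t\le d$, so the triangle inequalities hold. Hence $F(\bd a)=d/2+1\ge2$, because $d\ge2$ for a nondegenerate content.

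The main obstacle is the borderline case just treated. Since the degenerate $t=0$ term only gives $1$ there, we must fall back on the explicit count.
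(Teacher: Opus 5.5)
Your proof is correct. It uses the same framework as the paper, induction on $n$ via the Clebsch--Gordan recurrence \eqref{eq:fusrecur} applied to the two smallest entries, but the case analysis is organized differently.

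\textbf{The paper's route.} It splits on whether $a_1+a_n\le d-1$ and tries to make \emph{several} summands nondegenerate so that the induction hypothesis can be applied more than once.
\begin{itemize}
\item In Case A it applies induction to both $t=0$ and $t=1$. When $a_{n-1}=1$ it instead deletes a zero letter via Lemma~\ref{lem:T0}. This gives $2(n-3)$ or $2n-7$.
\item In Case B it applies induction to every $t\le T=d-1-a_1$. It then gets one more unit from the degenerate summand $t=T+1$ via Lemma~\ref{lem:star}. This gives $(T+1)(n-3)+1$.
\end{itemize}

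\textbf{Your route.} You observe that only the $t=0$ summand needs the induction hypothesis. Its nondegeneracy depends only on $a_{n-1}+a_n\le d-1$, and Corollary~\ref{cor:pairbound} guarantees this for $n\ge5$, and for $n=4$ outside the all-equal case. The extra $+1$ then comes from plain positivity of the (possibly degenerate) $t=1$ content. That positivity follows from the Remark after Proposition~\ref{prop:dimformula}, or directly: write the sorted letters as a word and split it into two rows of length $d-1$. So no case split on $a_1$ is needed, and the exceptional case $(d/2)^4$ is settled by the exact count $d/2+1$.

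\textbf{Comparison.} Your argument is noticeably shorter and isolates exactly what the bound $n-2$ requires. The paper's extra case work produces stronger intermediate bounds, which are never used afterwards.

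One presentational point: your $n=5$ step invokes the $n=4$ case. So the $n=4$ step should be presented before the $n\ge5$ step, or at least the induction should be declared to run over $n$ in increasing order.
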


\begin{proof}
    As a consequence of \cite[Theorem 7.10.2]{StanVol2}, we assume $a_1\ge a_2\ge\cdots\ge a_n\ge1$. Hence, $a_1\le d-1$. We use induction on $n$.
    
    Assume $n=3$. We claim $F(\bd a)=1$. By
    Proposition~\ref{prop:fusion} and Lemma~\ref{lem:CG},
    \[
        F(a_1,a_2,a_3)=\sum_{t}\dim\bigl(V_{j_t}\otimes V_{a_1}\bigr)^{SL_2}
    \]
    is $1$ if $a_1$ occurs among the $j_t$ which is equivalent to $|a_2-a_3|\le a_1\le a_2+a_3$ with
    $a_1\equiv a_2+a_3\bmod 2$; and $0$ otherwise. Now $a_1+a_2+a_3=2d$ is even, giving the
    parity condition. The inequality $a_1\le a_2+a_3=2d-a_1$ is equivalent to $a_1\le d$, which is true. Using
    $a_2\ge a_3$, we get that $a_2-a_3\le a_1$ is equivalent to $a_2\le a_1+a_3=2d-a_2$, i.e.\ $a_2\le d$, which is
    true. Hence, $F(\bd a)=1=n-2$.
    
    Now we assume $n\geq 4$. Since $k=2$, by Corollary~\ref{cor:pairbound} we get,
    \begin{equation}\label{eq:pairA}
        a_{n-1}+a_n\ \le\ \frac{4d}{n}\ \le\ d ,
    \end{equation}
    and the second inequality is an equality only if $n=4$. Additionally, the first is an equality
    only if all $a_i$ are equal. In other words,
    \begin{equation}\label{eq:pairA2}
        a_{n-1}+a_n\le d,\quad\text{and}\quad a_{n-1}+a_n\le d-1
    \ \text{unless } n=4 \text{ and } a_1=a_2=a_3=a_4 .
    \end{equation}
    We consider the following two cases according to the size of $a_1+a_n$.
    
    \noindent\textbf{Case A: $a_1+a_n\le d-1$.} Consider the summand for $t=0$ in \eqref{eq:fusrecur}, whose content is
    $\bd b_0=(a_1,\dots,a_{n-2},\,a_{n-1}+a_n)$ with half-sum $d$. We check the nondegeneracy of the content $\bd b_0$. All entries of $\bd b_0$ are $\ge1$. The entries of $\bd b_0$ are also $\le d-1$, since $a_i\le a_1\le d-1$ for $i\le n-2$ and
    $a_{n-1}+a_n\le a_1+a_n\le d-1$. So $\bd b_0$ is a nondegenerate content for
    $(2,n-1,d)$. Hence, by induction hypothesis, $F(\bd b_0)\ge(n-1)-2=n-3$.
    
    Now, assume $a_{n-1}\ge2$. Consider also the summand for $t=1$ whose content is $\bd b_1=(a_1,\dots,a_{n-2},\,j_1)$ with $j_1=a_{n-1}+a_n-2$, and half-sum $d-1$. We check the nondegeneracy of the content $\bd b_1$. We have $j_1\ge a_{n-1}+1-2\ge1$. Each $a_i\le a_1\le d-1-a_n\le d-2=(d-1)-1$; and $j_1\le a_1+a_n-2\le d-3\le (d-1)-1$. So $\bd b_1$ is a nondegenerate content for $(2,n-1,d-1)$. By induction hypothesis, we get $F(\bd b_1)\ge n-3$. Hence,
    \[
        F(\bd a)\ \ge\ F(\bd b_0)+F(\bd b_1)\ \ge\ 2(n-3)\ \ge\ n-2\qquad (n\ge4).
    \]
    
    On the other hand if $a_{n-1}=1$, then $a_n=1$ as well. The hypothesis of Case A gives $a_1\le d-2$. We cannot have $n=4$. Indeed, the sorting condition gives $a_1\ge a_2$ and
    $a_1+a_2=2d-2$, so $a_2=2d-2-a_1\ge 2d-2-(d-2)=d\ge a_1+2>a_1$, a contradiction. Therefore,
    $n\ge5$. The summand for $t=1$ has $j_1=0$. Hence, by Lemma~\ref{lem:T0} we may delete that letter. Therefore, we get the content $(a_1,\dots,a_{n-2})$ with half-sum $d-1$, all entries in $[1,d-2]$, i.e.\ a nondegenerate content for $(2,n-2,d-1)$ with $n-2\ge3$ letters. By induction hypothesis, it contributes $\ge (n-2)-2=n-4$. Hence,
    \[
        F(\bd a)\ \ge\ (n-3)+(n-4)\ =\ 2n-7\ \ge\ n-2\qquad(n\ge5).
    \]
    \noindent\textbf{Case B: $a_1+a_n\ge d$.} Put
    \[
        T:=d-1-a_1 .
    \]
    Since $\bd a$ is nondegenerate content, we get $T\ge0$. Rearranging the Case~B hypothesis gives $T\le a_n-1$. In particular, every $t$ with $0\le t\le T+1$ satisfies $t\le a_n=\min(a_{n-1},a_n)$, and hence contributes a summand in \eqref{eq:fusrecur}.
    
    We show that for each $0\le t\le T$ the corresponding summand in \eqref{eq:fusrecur} contributes $\ge n-3$. Indeed, fix such a $t$. The summand has content $\bd b_t=(a_1,\dots,a_{n-2},j_t)$ and half-sum $d-t$. We check nondegeneracy.
    
    First, we check the lower bound on $j_t$: since $t\le T$,
        \[
            j_t\ \ge\ j_T=a_{n-1}+a_n-2(d-1-a_1)=(a_1+a_{n-1})+(a_1+a_n)-2d+2\ \ge\ d+d-2d+2=2,
        \]
    using $a_1+a_{n-1}\ge a_1+a_n\ge d$ twice. The other entries are $\ge1$ by hypothesis. Next, we check the upper bound on the $a_i$: $a_i\le a_1=d-1-T\le d-1-t=(d-t)-1$. Finally, we check the upper bound on $j_t$: $j_t\le(d-t)-1$ is equivalent to $a_{n-1}+a_n\le d-1+t$. For $t\ge1$ this follows from~\eqref{eq:pairA2}. For $t=0$ it follows from~\eqref{eq:pairA2} unless $n=4$ and all $a_i$ are equal. We handle this exceptional case separately below. Therefore, apart from the exceptional case, each such $\bd b_t$ is a nondegenerate content for $(2,n-1,d-t)$, and contributes $F(\bd b_t)\ge n-3$ by induction hypothesis.
    
    Suppose $n=4$ and $a_1=a_2=a_3=a_4=:a$, so $d=2a$.
    Then the summand for $t=0$ has content $(a,a,2a)$ with half-sum $2a$. Its last entry equals its half-sum, so $F=1$ by Lemma~\ref{lem:star}. Since $n-3=1$, we see that the summand still contributes at least $n-3$, as required. For $1\le t\le T=a-1$ the arguments above apply verbatim.
    
    Now consider $t=T+1=d-a_1\le a_n$. Its content is $\bd b_{t}=(a_1,\dots,a_{n-2},j_{t})$ with $j_{t}=j_T-2\ge0$ and half-sum $d-t=a_1$. The entry $a_1$ equals the half-sum, so $F(\bd b_{t})=1$ by Lemma~\ref{lem:star}.
    
    Adding up the $T+1$ summands with $0\le t\le T$ and the summand for $t=T+1$,
    \[
        F(\bd a)\ \ge\ (T+1)(n-3)+1\ \ge\ (n-3)+1\ =\ n-2 ,
    \]
    using $T+1\ge1$ and $n-3\ge1$.
\end{proof}

\subsection{Grassmannian \texorpdfstring{$\Gr(3,6)$}{Gr(3,6)}}

For rectangular shapes there is a second description of SSYT which will be used in the proof of the main result of this subsection.

\begin{defn}
\label{def:subsetorder}
    For $k$-element subsets $A,B\subseteq\{1,\dots,n\}$, written in increasing order as
    $A=\{\alpha_1<\alpha_2<\cdots<\alpha_k\}$ and $B=\{\beta_1<\cdots<\beta_k\}$, define
    \[
        A\le B\quad\Longleftrightarrow\quad \alpha_i\le\beta_i\ \text{ for all }i=1,\dots,k .
    \]
    This is a partial order.
\end{defn}

\begin{defn}
    A \emph{multichain} of length $d$ is a sequence $C_1\le C_2\le\cdots\le C_d$ of $k$-subsets
    of $\{1,\dots,n\}$ (repetitions allowed). Its
    \emph{content} is $\bd a=(a_1,\dots,a_n)$, where $a_i=\#\{j: i\in C_j\}$ is the number of
    members of the chain that contain the letter $i$.
\end{defn}

\begin{lem}
\label{lem:columnmodel}
    Let $\lam=(d^k)$. The map sending an SSYT $T$ to the sequence of its columns
    $C_1,\dots,C_d$, read as subsets of $\{1,\dots,n\}$, is a bijection
    \[
        \SSYT\bigl((d^k),\bd a\bigr)\ \xrightarrow{\ \sim\ }\
        \{\text{multichains } C_1\le\cdots\le C_d \text{ of $k$-subsets with content }\bd a\}.
    \]
\end{lem}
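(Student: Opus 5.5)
The plan is to check directly that the column map is well defined, injective, and surjective; the argument needs only the definitions of SSYT and of the order in Definition~\ref{def:subsetorder}.

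\emph{Well defined.} Let $T\in\SSYT((d^k),\bd a)$. Its $j$-th column has $k$ entries and is strictly increasing, so it is a $k$-subset $C_j=\{T(1,j)<\cdots<T(k,j)\}$ of $\{1,\dots,n\}$, and the $i$-th smallest element of $C_j$ is exactly $T(i,j)$. Rows weakly increase, so $T(i,j)\le T(i,j+1)$ for every $i$, which is precisely $C_j\le C_{j+1}$. Finally, the number of cells containing the letter $i$ equals the number of columns containing $i$, because a strictly increasing column contains each letter at most once. Hence the content of the multichain is $\bd a$.

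\emph{Inverse.} Given a multichain $C_1\le\cdots\le C_d$ of $k$-subsets with content $\bd a$, write each $C_j$ increasingly and place its $i$-th smallest element in cell $(i,j)$. The columns are then strictly increasing by construction, and $C_j\le C_{j+1}$ gives $T(i,j)\le T(i,j+1)$, so the rows are weakly increasing. The content is again $\bd a$ by the same one-letter-per-column observation. The two constructions are visibly mutually inverse, since a column determines its ordered filling and conversely.

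The only point needing care, and the closest thing to an obstacle, is the identification of the $i$-th row entry with the $i$-th smallest element of the subset. This identification is what makes the componentwise order of Definition~\ref{def:subsetorder} match the row condition exactly. It holds because each column has exactly $k$ distinct entries, which is where the rectangular shape is used: every column has full length $k$.
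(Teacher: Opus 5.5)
Your proof is correct. Identifying $T(i,j)$ with the $i$-th smallest element of $C_j$, which uses that every column of $(d^k)$ has exactly $k$ strictly increasing entries, turns weak row-increase into the componentwise order of Definition~\ref{def:subsetorder}, and the at-most-once-per-column observation makes the two notions of content agree. The paper states this lemma without proof as a standard fact, so your direct verification supplies exactly the routine argument it omits.
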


For two partitions $\lambda=(\lambda_1,\dotsc,
\lambda_m)$ and $\mu=(\mu_1,\dotsc,\mu_m)$ of an integer $\ell\in\mathbb{N}$, we say $\lambda$ \emph{dominates} $\mu$, written as $\lambda\dom\mu$, if and only if $\lambda_1+\dotsc+\lambda_i\geq \mu_1+\dotsc+\mu_i$ for all $1\leq i\leq m$. Recall the Kostka positivity condition.

\begin{theorem}[see Theorem 1 in \cite{KostPos1}, or \cite{KostPos2}]
\label{thm:positivity}
    Let $\lam$ be a partition and $\bd a=(a_1\ge a_2\ge\cdots\ge a_m)$ a weakly decreasing
    sequence of non-negative integers with $\lvert\lam\rvert=\lvert\bd a\rvert$. If $\lam\dom\bd a$, then
    $\Kos{\lam}{\bd a}\ge1$.
\end{theorem}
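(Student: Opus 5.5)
The plan is to induct on the number $m$ of letters, building a semistandard tableau of shape $\lam$ and content $\bd a$ one horizontal strip at a time, as in Lemma~\ref{lem:shapeofletters}. The case $m=1$ is immediate: dominance forces $\lam=(a_1)$, and the single-row tableau is the only filling. For $m\ge2$, the aim is a partition $\mu\subseteq\lam$ such that:
\begin{itemize}
\item $\lam/\mu$ is a horizontal strip of size $a_m$;
\item $\mu\dom(a_1,\dots,a_{m-1})$.
\end{itemize}
The induction hypothesis then gives a tableau $T'$ of shape $\mu$ and content $(a_1,\dots,a_{m-1})$. Filling $\lam/\mu$ with the letter $m$ gives a semistandard tableau, by Lemma~\ref{lem:stripbottom} and the reasoning in the proof of Lemma~\ref{lem:polarization}.

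First, dominance together with $|\lam|=|\bd a|$ gives $\ell(\lam)\le m$. Indeed, $\lam_1+\dots+\lam_m\ge a_1+\dots+a_m=|\lam|$, so $\lam_{m+1}=0$. Next, the horizontal strips inside $\lam$ have at most $\sum_i(\lam_i-\lam_{i+1})=\lam_1$ cells. Since $a_m\le a_1\le\lam_1$, a strip of size $a_m$ exists.

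I would choose $\mu$ greedily from the bottom. Remove up to $\lam_r-\lam_{r+1}$ cells from row $r=\ell(\lam)$, then from row $r-1$, and so on upward, until $a_m$ cells have been removed. So each row below some index $s$ is cut down to $\mu_j=\lam_{j+1}$, row $s$ is partially cut, and rows above $s$ are untouched. By Lemma~\ref{lem:interlace}, $\lam/\mu$ is a horizontal strip.

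The main step is checking $\mu_1+\dots+\mu_i\ge a_1+\dots+a_i$ for $1\le i\le m-1$.
\begin{itemize}
\item If $i<s$, then $\mu_1+\dots+\mu_i=\lam_1+\dots+\lam_i\ge a_1+\dots+a_i$ by hypothesis.
\item If $i\ge s$, every row $j>i$ satisfies $\mu_j=\lam_{j+1}$. Hence
\[
\mu_1+\dots+\mu_i=|\lam|-a_m-(\lam_{i+2}+\lam_{i+3}+\cdots).
\]
So it suffices to show $\lam_{i+2}+\cdots+\lam_m\le a_{i+1}+\dots+a_{m-1}$. Dominance, subtracted from the equal totals, gives $\lam_{i+2}+\dots+\lam_m\le a_{i+2}+\dots+a_m$. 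Because $\bd a$ is weakly decreasing, $a_{i+2}+\dots+a_m\le a_{i+1}+\dots+a_{m-1}$ (compare term by term after an index shift).
\end{itemize}
Together these give $\mu\dom(a_1,\dots,a_{m-1})$, and the induction closes.

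I expect the bookkeeping in the case $i\ge s$ to be the main obstacle. The delicate point is that the partially cut row $s$ does not break the identity $\mu_j=\lam_{j+1}$ for $j>i$. This is exactly why the strip should be taken from the bottom first. The index shift is also the only place where the monotonicity of $\bd a$ is used.
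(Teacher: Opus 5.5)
Your proof is correct. The paper gives no argument for this statement at all: it imports it as a black box from the cited literature and uses it exactly once, to produce the auxiliary tableau $U$ in Proposition~\ref{prop:Tstar}. So your proposal is a genuinely different route only in the sense that it replaces a citation with a self-contained proof.

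Your argument works as written:
\begin{itemize}
\item Taking the strip greedily from the bottom gives $\lam_j\ge\mu_j\ge\lam_{j+1}$ for every $j$. So $\mu$ is a partition, and $\lam/\mu$ is a horizontal strip by Lemma~\ref{lem:interlace}.
\item The same choice gives $\mu_j=\lam_{j+1}$ for all $j>s$. This is what makes your tail-sum identity in the case $i\ge s$ exact.
\item In that case, dominance of $\lam$ at index $i+1\le m$, together with the shifted comparison $a_{t+1}\le a_t$, closes the inequality.
\item The degenerate situation $a_m=0$, where no row is touched, falls entirely under your first case with $\mu=\lam$.
\end{itemize}

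Compared with the citation, your version costs a paragraph. In exchange, it uses only the horizontal-strip facts the paper already develops: Lemmas~\ref{lem:interlace} and \ref{lem:stripbottom}, and the relabelling step in the proof of Lemma~\ref{lem:polarization}. It would therefore make Section~\ref{sec:section 5} self-contained. It is also constructive, so the tableau $U$ in Proposition~\ref{prop:Tstar} could be written down explicitly rather than obtained by appeal to the literature. The citation buys brevity, and only the direction you prove is ever used in the paper.
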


We need the following proposition to prove the main result of this subsection.

\begin{prop}
\label{prop:Tstar}
    Let $\bd a=(a_1,a_2,a_3,a_4,a_5,a_6)$ be a nondegenerate content for $(3,6,d)$. If $a_5+a_6\leq d-1$, then there exists $T^\ast\in\SSYT((d^3),\bd a)$ having a column that contains both the letters $5$ and $6$.
\end{prop}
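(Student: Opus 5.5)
The plan is to build $T^\ast$ directly in the column model of Lemma~\ref{lem:columnmodel}: we take a multichain of length $d-1$ with a suitably reduced content and append one extra column that contains both $5$ and $6$. The natural choice for that column is $C^\ast=\{4,5,6\}$. It is the unique maximum of the partial order in Definition~\ref{def:subsetorder} on $3$-subsets of $\{1,\dots,6\}$: for any $B=\{\beta_1<\beta_2<\beta_3\}$ we have $\beta_i\le i+3$. So $C^\ast$ can be placed as the last column after any multichain $C_1\le\cdots\le C_{d-1}$ without breaking the chain condition.

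\textbf{Step 1.} Set $\bd b:=\bd a-\mathbf 1_{\{4,5,6\}}=(a_1,a_2,a_3,a_4-1,a_5-1,a_6-1)$ and check that $\bd b$ is a content for $(3,6,d-1)$ in the sense of Definition~\ref{def:content}.
\begin{itemize}
\item Nondegeneracy of $\bd a$ gives $d\ge2$, so $d-1\ge1$.
\item It gives $a_i\ge1$, so all $b_i\ge0$.
\item It gives $a_i\le d-1$, so $b_i\le d-1$ for $i\le3$ and $b_i\le d-2$ for $i\ge4$.
\item The sum is $\sum_i b_i=3d-3=3(d-1)$.
\end{itemize}

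\textbf{Step 2.} By the remark following Proposition~\ref{prop:dimformula} (a sequence is a content for $(k,n,d)$ if and only if $\Kos{(d^k)}{\bd a}\neq0$), there is a tableau $T_0\in\SSYT(((d-1)^3),\bd b)$. Alternatively one can invoke Theorem~\ref{thm:positivity} after sorting, since every part of $\bd b$ is at most $d-1$, which gives the dominance $((d-1)^3)\dom\bd b$. Write $T_0$ as a multichain $C_1\le\cdots\le C_{d-1}$ of $3$-subsets.

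\textbf{Step 3.} Append $C_d:=\{4,5,6\}$. By maximality, $C_{d-1}\le C_d$, so $C_1\le\cdots\le C_d$ is a multichain of length $d$. Its content is $\bd b+\mathbf 1_{\{4,5,6\}}=\bd a$. Lemma~\ref{lem:columnmodel} then turns it into $T^\ast\in\SSYT((d^3),\bd a)$, whose last column contains both $5$ and $6$.

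The only point that needs care is Step~1, namely that subtracting $1$ from $a_4,a_5,a_6$ keeps every entry in $[0,d-1]$; nondegeneracy of $\bd a$ handles this exactly. Because the entries of $\bd a$ are not assumed sorted here, I deliberately subtract at the fixed letters $4,5,6$ rather than at the largest entries. This construction does not appear to use the hypothesis $a_5+a_6\le d-1$. I would double-check this against the intended use in Theorem~\ref{thm:star}. If a particular shape of $T^\ast$ turns out to be needed there (for example, one lying outside the image of $\Phi$ in Proposition~\ref{prop:T1image}), the column $\{4,5,6\}$ already provides it, since that column contains both $5$ and $6$.
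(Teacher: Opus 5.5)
Your proof is correct, and it takes a different, shorter route than the paper's.

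\textbf{The paper's route.} It sorts $\bd a$ and introduces the non-rectangular shape $\mu=(d,\,d-1,\,d-a_5-a_6+1)$. It fills $\mu$ with the letters $1,\dots,4$ using Kostka positivity for $\mu\dom(a_1,a_2,a_3,a_4)$. It then places one $5$ at the end of row $2$ and the remaining $a_5-1$ fives and $a_6$ sixes at the end of row $3$, so that column $d$ reads $U_{1,d},5,6$. This hand-built placement only makes sense when $a_5+a_6$ is bounded, so that $\mu_3=d-a_5-a_6+1$ is a legitimate row length. That is where the hypothesis $a_5+a_6\le d-1$ enters.

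\textbf{Your route.} You peel off the maximal $3$-subset $\{4,5,6\}$ as the last column. You then fill the rectangle $((d-1)^3)$ with content $\bd b=\bd a-\mathbf 1_{\{4,5,6\}}$. Existence of that filling follows from $((d-1)^3)\dom\bd b$, which you verify correctly: parts $\le d-1$ handle the first partial sums, and equality of sizes handles the rest.

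\textbf{What your route buys.} It shows the hypothesis $a_5+a_6\le d-1$ is not needed at all. It avoids the reduction to sorted $\bd a$. It also works verbatim for any nondegenerate content for any $(k,n,d)$, using the column $\{n-k+1,\dots,n\}$.

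\textbf{Where the result is used.} The proposition is invoked in Case A of Theorem~\ref{thm:B}, not Theorem~\ref{thm:star}. There, all that is required is one tableau with a column containing both $5$ and $6$, i.e.\ one lying outside the image of $\Phi$ from Proposition~\ref{prop:T1image}. Your tableau serves that purpose just as well.
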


\begin{proof}
    Without loss of generality we assume $a_1\ge a_2\ge\cdots\ge a_6\ge1$. Since $\bd a$ is nondegenerate content and $a_5+a_6\leq d-1$, notice that $d\geq 3$. Set
    \[
        \mu\ :=\ \bigl(d,\ d-1,\ d-a_5-a_6+1\bigr).
    \]
    We have $d\ge d-1$, and $d-1\ge d-a_5-a_6+1$ because
    $a_5+a_6\ge2$. Moreover, since $a_5+a_6\le d-1$, we have $\mu_3=d-a_5-a_6+1\ge2$. Hence, we conclude that $\mu$ is a partition with three rows.
    
    We check the dominance condition $\mu\dom(a_1,a_2,a_3,a_4)$. For this we check the three nontrivial inequalities.
    \[
        \mu_1=d\ \ge\ a_1 ,\qquad
        \mu_1+\mu_2=2d-1\ \ge\ a_1+a_2 ,\qquad
        \mu_1+\mu_2+\mu_3=|\mu|\ \ge\ a_1+a_2+a_3
    \]
    Using the fact that $a_1\le d-1$, $a_1+a_2\le 2d-2$, and
    $\lvert\mu\rvert=a_1+\cdots+a_4\ge a_1+a_2+a_3$ gives the first, second, and third inequalities above, respectively. The fourth condition is the equality of sizes: we have $\lvert\mu\rvert=3d-a_5-a_6=a_1+a_2+a_3+a_4$. 
    
    By Theorem~\ref{thm:positivity} there is a semistandard tableau $U$ of shape $\mu$ in the
    letters $\{1,2,3,4\}$ with content $(a_1,a_2,a_3,a_4)$. Now define $T^\ast$ of shape
    $(d^3)$ in the following way:
    \begin{enumerate}[nosep]
        \item on the cells of $\mu$, put the entries of $U$;
        \item in cell $(2,d)$, put $5$;
        \item in the cells $(3,\,d-a_5-a_6+2),\ \dots,\ (3,\,d-a_6)$, put $5$. There are $a_5-1$ such cells;
        \item in the cells $(3,\,d-a_6+1),\ \dots,\ (3,d)$, put $6$. There are $a_6$ such cells.
    \end{enumerate}
    We check the row lengths. Row $1$ has $\mu_1=d$ cells; row $2$ has $\mu_2+1=d$; row $3$ has
    $\mu_3+(a_5-1)+a_6=d$. So the shape is the rectangle $(d^3)$, and the content is
    $(a_1,a_2,a_3,a_4,a_5,a_6)=\bd a$.
    
    We check the semistandardness of rows. Row $1$ is a row of $U$. Row $2$ is a row of $U$ (entries
    $\le4$) followed by a $5$. Row $3$ is a row of $U$ (entries $\le4$), then $a_5-1$ copies of
    $5$, then $a_6$ copies of $6$. All weakly increasing.
    
    We check the semistandardness of columns. Let $j$ be a column index. If $j\le\mu_3$, all three cells of column $j$ belong to $\mu$ and carry the entries of $U$, which satisfies the strictly increasing column condition. If $\mu_3<j\le d-1$, then rows $1$ and $2$ of column $j$ lie in $\mu$ (indeed $j\le d-1=\mu_2\le\mu_1$) and carry entries $\le4$, strictly increasing; row $3$ carries $5$ or $6$, which is larger. Finally, if $j=d$, the column reads $U_{1,d}$, then $5$, then $6$. Since $U_{1,d}\le4<5<6$, the column is strictly increasing.
    
    Finally, column $d$ contains both $5$ (in row $2$) and $6$ (in row $3$).
\end{proof}

We present the main theorem of this subsection.

\begin{theorem}\label{thm:B}
    Every nondegenerate content $\bd a$ for $(3,6,d)$ satisfies $\Kos{(d^3)}{\bd a}\ge4$.
\end{theorem}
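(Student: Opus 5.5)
We sort $\bd a$ so that $a_1\ge\cdots\ge a_6\ge1$. We then split according to whether $a_5+a_6\le d-1$ or $a_5+a_6\ge d$. By Corollary~\ref{cor:pairbound} with $n=2k=6$ we always have $a_5+a_6\le d$, and equality forces $d$ even and $a_i=d/2$ for all $i$. The second case is therefore the single content $\bd a=(c,c,c,c,c,c)$ with $d=2c$, $c\ge1$, and nondegeneracy requires $c\le d-1$, which holds. For this case we argue directly. We merge $5,6$ to get $(c,c,c,c,2c)$ with $2c=d$. Lemma~\ref{lem:T3} then reduces this to $\Gr(2,4)$ with content $(c,c,c,c)$ and $d=2c$, which is nondegenerate when $c\ge1$ and $d\ge2$. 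Proposition~\ref{prop:A} gives this at least $2$. Equation~\eqref{eq:strict} adds the tableaux with some column containing both $5$ and $6$. We exhibit two such tableaux explicitly, using multichains via Lemma~\ref{lem:columnmodel}. For $c=1$ one takes, for example, columns $\{1,2,5\}\le\{3,4,6\}$ and $\{1,3,5\}\le\{2,4,6\}$ (resp.\ $\{1,2,3\}\le\{4,5,6\}$). For general $c$ one prepends $c-1$ copies of a column such as $\{1,2,3\}$ and appends copies of $\{4,5,6\}$ in an appropriate way, or checks the small case by hand and scales. This gives $\ge4$.

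In the main case $a_5+a_6\le d-1$, the merged content $\bd a^{\mathrm{mrg}}=(a_1,\dots,a_4,a_5+a_6)$ is a nondegenerate content for $(3,5,d)$: all entries lie in $[1,d-1]$. By duality (Lemma~\ref{lem:T2}), $\Kos{(d^3)}{\bd a^{\mathrm{mrg}}}=\Kos{(d^2)}{(\bd a^{\mathrm{mrg}})^\vee}$ with $(\bd a^{\mathrm{mrg}})^\vee$ nondegenerate for $(2,5,d)$, so Proposition~\ref{prop:A} gives $\ge 3$. Then \eqref{eq:strict} of Proposition~\ref{prop:T1image} adds the number of tableaux with a column containing both $5$ and $6$. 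Proposition~\ref{prop:Tstar} supplies at least one such $T^\ast$, giving $\Kos{(d^3)}{\bd a}\ge3+1=4$.

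The main obstacle is the exceptional uniform content, where merging produces a degenerate content and we lose one from the count. There I would carefully verify that at least two tableaux of content $(c^6)$ and shape $((2c)^3)$ have a column containing both $5$ and $6$. For general $c$, I would construct them via multichains: put $c-1$ columns $\{1,2,3\}$, then $\{1,2,5\}$... adjusting so that contents match. Alternatively, I would apply the same merge argument to the letters $1,2$ by symmetry (Lemma~\ref{lem:T2} reverses roles), or use the merge on a different pair after permuting entries. Since all entries are equal, any pair merge gives $\ge2$ plus the strict-column count, and a direct construction of two tableaux with a column $\{\cdot,5,6\}$ finishes it.
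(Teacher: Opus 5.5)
Your main case $a_5+a_6\le d-1$ is exactly the paper's Case A. You merge to a nondegenerate $(3,5,d)$ content, get $\ge 3$ from Proposition~\ref{prop:A} via duality, and add the tableau $T^\ast$ of Proposition~\ref{prop:Tstar}, which lies outside the image of $\Phi$. Your use of Corollary~\ref{cor:pairbound} to reduce the other case to the uniform content $(c^6)$ with $d=2c$ is also the paper's.

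\textbf{The gap is in the uniform case.} Your plan of ``$\ge 2$ from the merge plus two tableaux with a column containing both $5$ and $6$'' is sound, but the tableaux you exhibit for $c=1$ are the wrong ones. The multichains $\{1,2,5\}\le\{3,4,6\}$ and $\{1,3,5\}\le\{2,4,6\}$ are the tableaux with rows $(1,3),(2,4),(5,6)$ and $(1,2),(3,4),(5,6)$. In both, $5$ and $6$ sit in the same row, not the same column. These are precisely the two tableaux in the image of $\Phi$, i.e.\ the ones already counted by $\Kos{(2,2)}{(1,1,1,1)}=2$, so adding them double counts.

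Of the five standard tableaux of shape $(2,2,2)$, only those with column pairs $(\{1,2,3\},\{4,5,6\})$, $(\{1,2,4\},\{3,5,6\})$ and $(\{1,3,4\},\{2,5,6\})$ have a $5$--$6$ column. Your ``(resp.\ $\{1,2,3\}\le\{4,5,6\}$)'' supplies only one of them.

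Your general-$c$ step (``prepend/append copies in an appropriate way, or check the small case and scale'') is left vague. In the scaling version it inherits the error, since $A^c\le B^c$ has a $5$--$6$ column only if $A$ or $B$ already does. For $c\ge 2$, the recipe $\{1,2,3\}^{c-1}\le A\le B\le\{4,5,6\}^{c-1}$ would in fact work for any base pair, because the column $\{4,5,6\}$ appears. It fails exactly at $c=1$, i.e.\ $d=2$.

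\textbf{Repair.} Take $(A,B)=(\{1,2,4\},\{3,5,6\})$ and $(\{1,3,4\},\{2,5,6\})$, each column repeated $c$ times. Then $A^c\le B^c$ is a multichain of length $2c=d$ and content $(c^6)$, and its column $B$ contains both $5$ and $6$. This gives the two missing tableaux outside the image of $\Phi$, hence $\Kos{(d^3)}{\bd a}\ge 2+2=4$.

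The paper avoids the merge entirely in this case. It lists four complementary pairs $A\le B$ with $A\in\{\{1,2,3\},\{1,2,4\},\{1,2,5\},\{1,3,4\}\}$ and counts the four distinct multichains $A^a\le B^a$ directly via Lemma~\ref{lem:columnmodel}. That way, no distinction between tableaux inside and outside the image of $\Phi$ is needed.
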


\begin{proof}
    Without loss of generality we assume $a_1\ge a_2\ge\cdots\ge a_6\ge1$. We consider the following two cases.
    
    \noindent\textbf{Case A: $a_5+a_6\le d-1$.} The merged content $\bd a^{\,\mathrm{mrg}}=(a_1,a_2,a_3,a_4,\,a_5+a_6)$ is a nondegenerate content for $(3,5,d)$. Indeed, the merged content has five entries summing to $3d$. Its first four entries lie in $[1,d-1]$ since $\bd a $ is a nondegenerate content for $(3,6,d)$, and the fifth entry is $a_5+a_6$, which is $\ge2$ and $\le d-1$ by hypothesis. So it is a nondegenerate content for $(3,5,d)$. Therefore, we get
    \[
        \Kos{(d^3)}{\bd a}\ \ge\ \Kos{(d^3)}{\bd a^{\,\mathrm{mrg}}}\ \ge\ 3 .
    \]
    The first inequality follows from Lemma~\ref{lem:polarization}, and by using duality of $\Gr(2,5)$ and $\Gr(3,5)$ and Proposition~\ref{prop:A}, we get $\Kos{(d^3)}{\bd a^{\,\mathrm{mrg}}}\ge3$. 

    Therefore, by Proposition~\ref{prop:T1image} the image of the merging injection $\Phi$ contains at
    least $3$ tableaux, all of which have no column containing both $5$ and $6$. By Proposition~\ref{prop:Tstar} there is at least one more tableau of the same content which does have such a column, and hence lies outside the image of $\Phi$. Therefore, $\Kos{(d^3)}{\bd a}\ge 3+1=4$.

    \noindent\textbf{Case B: $a_5+a_6\ge d$.} We have $(k,n)=(3,6)$, so $n=2k$. By using Lemma~\ref{lem:pairbound} and Corollary~\ref{cor:pairbound}, we conclude $d$ is even and $a_1=\cdots=a_6=d/2$. We write $d=2a$ with $a\geq 1$, and $\bd a=(a,a,a,a,a,a)$.

    We will use Lemma~\ref{lem:columnmodel} and exhibit four multichains. For a pair $(A,B)$ of
    complementary $3$-subsets of $\{1,\dots,6\}$ with $A\le B$, the sequence
    \[
        \underbrace{A\le A\le\cdots\le A}_{a}\ \le\ \underbrace{B\le B\le\cdots\le B}_{a}
    \]
    is a multichain of length $2a=d$, and each letter lies in exactly $a$ of its members, namely in the $a$ copies of whichever of $A,B$ contains it. So its content is $(a^6)=\bd a$. Distinct $A$'s give distinct multichains.
    
    We have at least four valid choices:
    \[
        \begin{array}{llll}
            A=\{1,2,3\},&B=\{4,5,6\}:&1\le4,\ 2\le5,\ 3\le6\\
            A=\{1,2,4\},&B=\{3,5,6\}:&1\le3,\ 2\le5,\ 4\le6\\
            A=\{1,2,5\},&B=\{3,4,6\}:&1\le3,\ 2\le4,\ 5\le6\\
            A=\{1,3,4\},&B=\{2,5,6\}:&1\le2,\ 3\le5,\ 4\le6
        \end{array}
    \]
    Hence $\Kos{(d^3)}{\bd a}=\Kos{(d^3)}{(a^6)}\ge4$.
\end{proof}

\section{Main result}
\label{sec:section 6}

In this section we obtain a lower bound on the dimension of the weight spaces of $\dim_{\mathbb{K}}\SI^{\,\Gr(k,n)}_{(d;\bd a)}$ which is needed for proving Conjecture \ref{conj:three} for the rectangular Kostka coefficients.

Let
\[
    D:=\{(k,n):k\ge2,\ n\ge k+2\},\qquad
    E:=\{(2,4),(2,5),(3,5)\},\qquad
    N:=D\setminus E
\]
We order $N$ lexicographically, i.e., $(k',n')<_{\mathrm{lex}}(k,n)$ means $k'<k$, or
$k'=k$ and $n'<n$.

\begin{lem}\label{lem:wellfounded}
    The lexicographic order on $N$ is well-founded.
\end{lem}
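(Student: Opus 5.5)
The plan is to prove the stronger, standard fact that the lexicographic order on all of $\mathbb{N}\times\mathbb{N}$ is a well-order, and then restrict to $N$. Since $N\subseteq D\subseteq\mathbb{N}\times\mathbb{N}$, and any subset of a well-ordered set is well-ordered under the restricted order, every nonempty subset of $N$ will then have a least element. Equivalently, there will be no infinite strictly descending chain in $N$, and this is the form needed for induction along $<_{\mathrm{lex}}$.

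To show that $<_{\mathrm{lex}}$ well-orders $\mathbb{N}\times\mathbb{N}$, take a nonempty subset $S\subseteq\mathbb{N}\times\mathbb{N}$.
\begin{enumerate}
    \item The set of first coordinates $\{k:(k,n)\in S\}$ is a nonempty subset of $\mathbb{N}$, so it has a least element $k_0$.
    \item The set of second coordinates $\{n:(k_0,n)\in S\}$ is also a nonempty subset of $\mathbb{N}$, so it has a least element $n_0$.
    \item We check that $(k_0,n_0)$ is the least element of $S$. If $(k,n)\in S$, then $k\ge k_0$. If $k>k_0$, we have $(k_0,n_0)<_{\mathrm{lex}}(k,n)$. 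If $k=k_0$, then $n\ge n_0$ by the choice of $n_0$.
\end{enumerate}
So every nonempty $S$ has a least element.

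It is also worth checking directly that $<_{\mathrm{lex}}$ is a strict total order, so that the phrase "least element" is meaningful. Irreflexivity and totality are immediate from the definition. Transitivity follows by comparing first coordinates, and then comparing second coordinates when the first coordinates agree.

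The only mild subtlety is that lexicographic order on $\mathbb{N}^2$ has infinitely many predecessors for an element such as $(k,n)$ with $k\ge1$: for example $(1,n)$ lies below $(2,4)$ for every $n$. So one cannot argue by finiteness of initial segments, and must use the least-element argument above (or, equivalently, the observation that in a descending chain the first coordinate can strictly drop only finitely often, after which the second coordinate strictly decreases in $\mathbb{N}$). I do not expect any real obstacle.
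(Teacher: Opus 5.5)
Your proof is correct and is essentially the paper's argument. The paper works directly with a strictly decreasing chain in $N$: the first coordinates are non-increasing and bounded below by $2$, hence eventually constant, after which the second coordinates would strictly decrease while bounded below by $k+2$. That is exactly the descending-chain variant you mention at the end, and your least-element proof on all of $\mathbb{N}\times\mathbb{N}$, restricted to $N$, is an equivalent packaging of the same idea.
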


\begin{proof}
    Let $(k_1,n_1)>(k_2,n_2)>\cdots$ be a strictly decreasing chain. The integers $k_i$ are
    non-increasing and bounded below by $2$, and hence has to be eventually constant. Say $k_i=k$ for $i\ge i_0$. For $i\ge i_0$ the integers $n_i$ has to be strictly decreasing while remaining bounded below by $k+2$, contradiction.
\end{proof}

\begin{theorem}\label{thm:star}
    Let $k\ge2$, $n\ge k+2$, $d\ge1$, and let $\bd a$ be a nondegenerate content for $(k,n,d)$, i.e.\ $\sum_ia_i=kd$ and $1\le a_i\le d-1$ for all $i$. We obtain the following bounds on the weight spaces determined by the nondegenerate content $\bd a$
    \[
        \dim_{\mathbb{K}}\SI^{\,\Gr(k,n)}_{(d;\bd a)}\ =\ \Kos{(d^k)}{\bd a}\ \ge\
        \begin{cases}
            2,&(k,n)=(2,4),\\[1mm]
            3,&(k,n)\in\{(2,5),\,(3,5)\},\\[1mm]
            4,&\text{otherwise}
        \end{cases}
    \]
\end{theorem}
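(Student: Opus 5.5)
The plan is a well-founded induction on $(k,n)\in N$ in the lexicographic order (Lemma~\ref{lem:wellfounded}), after first disposing of the small cases. For $k=2$, Proposition~\ref{prop:A} gives $\Kos{(d,d)}{\bd a}\ge n-2$. This yields $2$ for $(2,4)$, $3$ for $(2,5)$, and $\ge4$ for all $n\ge6$. The case $(3,5)$ follows from $(2,5)$ by duality, since Lemma~\ref{lem:T2} preserves nondegeneracy. The case $(3,6)$ is Theorem~\ref{thm:B}. So it remains to treat $(k,n)\in N$ with $k\ge3$ and $(k,n)\ne(3,6)$. By duality (Lemma~\ref{lem:T2}) I may assume $n\ge 2k$ whenever that helps, since $(n-k,n)$ is then smaller in the lexicographic order when $n-k<k$.

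For the inductive step I sort $a_1\ge\cdots\ge a_n\ge1$ and imitate Case A of Theorem~\ref{thm:B}. If $a_{n-1}+a_n\le d-1$, the merged content $\bd a^{\,\mathrm{mrg}}$ is nondegenerate for $(k,n-1,d)$. Lemma~\ref{lem:polarization} then gives $\Kos{(d^k)}{\bd a}\ge\Kos{(d^k)}{\bd a^{\,\mathrm{mrg}}}$. If $(k,n-1)\in N$, induction gives $\ge4$ directly. The only boundary case is $n-1=k+1$, i.e.\ $(k,k+1)\notin D$. Here I would dualize first: $(k,k+2)$ is dual to $(2,k+2)$ with $k+2\ge5$, hence already handled by the $k=2$ case. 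So I may take $n\ge k+3$, and then $(k,n-1)\in D$. For $k\ge3$ the pair $(k,n-1)$ lies in $E$ only when it is $(3,5)$, i.e.\ $(k,n)=(3,6)$, which is already done. Thus the merge case closes by induction alone, with no need for the extra tableau of Proposition~\ref{prop:Tstar}, except that the constructions there are available as a fallback.

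The remaining case is $a_{n-1}+a_n\ge d$. By Corollary~\ref{cor:pairbound} this forces $n\le 2k$, and after dualizing to $n\ge 2k$ I get $n=2k$ with all $a_i=d/2=a$. For $\bd a=(a^{2k})$ I would exhibit four multichains as in Case B of Theorem~\ref{thm:B}. Using Lemma~\ref{lem:columnmodel}, I take $A^{(a)}\le B^{(a)}$ for complementary $k$-subsets $A\le B$ of $\{1,\dots,2k\}$. The choices are
\[
A=\{1,\dots,k\},\quad \{1,\dots,k-1,k+1\},\quad \{1,\dots,k-1,k+2\},\quad \{1,\dots,k-2,k,k+1\}.
\]
These are the natural analogues of the $k=3$ list, and for each one the inequality $A\le B$ is checked entrywise.

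The main obstacle is the bookkeeping of which smaller pair the reductions land in, and making sure the reduction never falls into $E$ or out of $D$. The duality step must also genuinely decrease the pair in the order, or at least land in an already-settled case. I expect the cases $n=k+2$ and $n=k+3$ to need the most care. I would double-check them by routing through $k=2$ via duality, so that the bound $n-2\ge4$ from Proposition~\ref{prop:A} applies.
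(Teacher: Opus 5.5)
Your proposal is correct, and its skeleton matches the paper's proof. Both use Proposition~\ref{prop:A} for $k=2$, duality for $(3,5)$ and for $n<2k$, and Theorem~\ref{thm:B} for $(3,6)$. Both merge via Lemma~\ref{lem:polarization} when $a_{n-1}+a_n\le d-1$. Otherwise Corollary~\ref{cor:pairbound} forces $n=2k$ and $\bd a=(a^{2k})$.

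The one genuine difference is how the uniform content $(a^{2k})$ is handled.
\begin{itemize}
\item The paper merges the last two letters to get $\bigl((d/2)^{2k-2},d\bigr)$. It removes the entry $d$ with the projection Lemma~\ref{lem:T3}, then applies the induction hypothesis to the nondegenerate content $\bigl((d/2)^{2k-2}\bigr)$ for $(k-1,2k-2)$.
\item You instead exhibit four explicit multichains $A^{(a)}\le B^{(a)}$ through Lemma~\ref{lem:columnmodel}, generalizing Case~B of Theorem~\ref{thm:B}.
\end{itemize}
Your four choices of $A$ do satisfy $A\le B$ entrywise for every $k\ge3$; only the third would fail for $k=2$, which you never need.

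What each route buys:
\begin{itemize}
\item Yours is more elementary and self-contained. It avoids Lemma~\ref{lem:T3}, which the paper proves through the $\sigma$-stable decomposition, and it avoids checking that $(k-1,2k-2)\in N$.
\item The paper's route needs no new combinatorial verification and simply feeds back into the well-founded induction.
\end{itemize}

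A minor point: if you dualize every $n<2k$ up front, as the paper does, then $k\ge3$ gives $n-1\ge 2k-1\ge k+2$ in the merge step. The boundary case $n-1=k+1$ then never arises. Your separate treatment of it is harmless, but it is only needed if you choose not to dualize there. In that case $(k,n)\in N$ forces $k\ge4$, so the dual $(2,k+2)$ has $k+2\ge6$ and Proposition~\ref{prop:A} indeed gives $\ge4$.
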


\begin{proof}
    Let the sets $D,E$, and $N$ be as above. For $(k,n)\in E$, the result follows from Proposition \ref{prop:A}, and by using the duality of the Grassmannians $\Gr(2,5)$ and $\Gr(3,5)$.
    
    For $(k,n)\in N$, let $\mathcal{P}(k,n)$ be the statement: for every $d\ge1$ and every nondegenerate content $\bd a$ for $(k,n,d)$ one has $\Kos{(d^k)}{\bd a}\ge4$. In order to prove the statement $\mathcal{P}$ on all of $N$, by Lemma~\ref{lem:wellfounded} and well-founded induction, it suffices to prove that for each $x\in N$, if $\mathcal{P}(y)$ holds for every $y\in N$ with $y<_{\mathrm{lex}}x$, then $\mathcal{P}(x)$ holds.
    
    Fix $x=(k,n)\in N$ and assume
    \begin{equation*}
        \mathcal{P}(k',n')\quad\text{for every }(k',n')\in N\text{ with }(k',n')<_{\mathrm{lex}}(k,n).
    \end{equation*}
    Let $d\ge1$ and let $\bd a$ be a nondegenerate content for $(k,n,d)$. Without loss of generality we may assume $a_1\ge a_2\ge\cdots\ge a_n$. We will show
    $\Kos{(d^k)}{\bd a}\ge4$ by considering the following cases:
    
    Let $k=2$. Since $(2,4),(2,5)\notin N$, we have $n\ge6$. Proposition~\ref{prop:A} gives
    \[
        K_{(d,d),\mathbf a}\ge n-2\ge4.
    \]
    
    Let $k=3$. If $(k,n)=(3,6)$, then Theorem~\ref{thm:B} gives $K_{(d^3),\mathbf a}\ge4$. Therefore, assume $n\ge7$. By Corollary~\ref{cor:pairbound},
    \[
        a_{n-1}+a_n\le \frac{6d}{n}\le\frac{6d}{7}<d
    \]
    so $a_{n-1}+a_n\le d-1$. Hence, $\mathbf a^{\mathrm{mrg}}$ has $a_i\in[1,d-1]$ for $i\leq n-2$, and merged entry $a_{n-1}+a_n\in[2,d-1]$. We see that it is a nondegenerate content for $(3,n-1,d)$. Since $n-1\geq 6$, we have $(3,n-1)\in N$. Moreover $(3,n-1)<_{\text{lex}}(3,n)$. Therefore, by Lemma~\ref{lem:polarization} and induction hypothesis, we get
    \[
        K_{(d^3),\mathbf a}\ge K_{(d^3),\mathbf a^{\mathrm{mrg}}}\ge4.
    \]
    For the rest of the proof we assume $k\geq 4$. We will consider $n$ in different ranges: $n\ge2k+1$, $n=2k$, and $k+2\le n\le2k-1$.
    
    Suppose $n\ge2k+1$. Identical to the case above, with
    \[
        a_{n-1}+a_n\le\frac{2kd}{n}\le\frac{2kd}{2k+1}<d
    \]
    we see that $\mathbf a^{\mathrm{mrg}}$ is a nondegenerate content for $(k,n-1,d)$. We have $(k,n-1)$ in $N$ because $n-1\ge2k\ge k+2$ and $k\ge4$ excludes $E$, and is lexicographically smaller. So
    \[
        K_{(d^k),\mathbf a}\ge K_{(d^k),\mathbf a^{\mathrm{mrg}}}\ge4.
    \]
    
    Next, we suppose $n=2k$. If $a_{n-1}+a_n\le d-1$, then merging gives a nondegenerate content for $(k,2k-1,d)$. We have $(k,2k-1)\in N$ because $2k-1\ge k+2$ ($k\ge3$) and $k\ge4$ and is lexicographically smaller. So
    \[
        K_{(d^k),\mathbf a}\ge K_{(d^k),\mathbf a^{\mathrm{mrg}}}\ge4.
    \]
    On the other hand, if $a_{n-1}+a_n\ge d$, then by Corollary~\ref{cor:pairbound} we get that $d$ is even and
    \[
        \mathbf a=\big((\tfrac d2)^{2k}\big)
    \]
    with $d\ge2$. If we merge the last two letters, then we get the content $\bigl((d/2)^{2k-2},d\bigr)$, which has an entry equal to $d$. Using Lemma~\ref{lem:polarization} and Lemma~\ref{lem:T3}, we get
    \[
        \Kos{(d^k)}{\bd a}\ \ge\ \Kos{(d^k)}{\bigl((d/2)^{2k-2},d\bigr)}
        \ =\ \Kos{(d^{\,k-1})}{\bigl((d/2)^{2k-2}\bigr)} .
    \]
    The content $\bigl((d/2)^{2k-2}\bigr)$ has $2k-2$ entries summing to $(k-1)d$, each equal
    to $d/2\in[1,d-1]$. Therefore, it is a nondegenerate content for $(k-1,2k-2,d)$. Moreover, $(k-1,2k-2)\in N$; indeed $2k-2\ge(k-1)+2$ since $k\ge3$, and $(k-1,2k-2)\notin E$ since $k-1\ge3$. In the boundary case $k-1=3$, one has $2k-2=6\ne5$. It is lexicographically smaller since $k-1<k$. Hence, the induction hypothesis gives $\ge4$.
    
    Finally, we suppose $k+2\le n\le2k-1$. By Lemma~\ref{lem:T2},
    \[
        \Kos{(d^k)}{\bd a}\ =\ \Kos{(d^{\,n-k})}{\bd a^{\vee}}\quad \text{where}
        \quad \bd a^\vee:=(d-a_1,\dots,d-a_n)
    \]
    and $\bd a^\vee$ is nondegenerate for $(n-k,n,d)$. We have $(n-k,n)\in N$. Indeed, $n-k\ge2$ because $n\ge k+2$, and $n\ge(n-k)+2$ because $k\ge2$. Additionally, since $n\ge k+2\ge6$, we have $(n-k,n)\notin E$. Since $n-k\le k-1<k$, it is lexicographically smaller. Hence, the induction hypothesis gives $\ge4$.
\end{proof}

\begin{remark}
\label{rem:trivialcase}
    In our setting, the relevant range of parameters is $k\geq 2$ and $n\geq k+2$. In all other cases, we obtain $\dim_{\mathbb{K}}\SI^{\,\Gr(k,n)}_{(d;\bd a)}\ =\ \Kos{(d^k)}{\bd a}\leq1$. Indeed, for $k = 1$ a tableau with one row is determined by its content. For $n = k$, the only filling of $(d^k)$ has row $i$ filled with the letter $i$. For $n = k+1$ use duality of Grassmannians $\Gr(k,n)\cong\Gr(n-k,n)$ to reduce to $k=1$ case.
\end{remark}

\section{King, Tollu, Toumazet conjecture for rectangular Kostka coefficients}
\label{sec:section 7}

A conjecture of King, Tollu, Toumazet \cite[see Conj. 3.3]{soton41202} states that for partitions $\lambda,\mu$ and $\nu$, if the Littlewood-Richardson coefficient $c^{\nu}_{\lambda,\mu}=3$, then $c^{M\nu}_{M\lambda,M\mu}=\binom{M+2}{2}$ or $2M+1$ for every positive integers $M$. In this section we will prove this conjecture for rectangular Kostka coefficients, which are a special case of Littlewood-Richardson coefficients.

\subsection{Grassmannian \texorpdfstring{$\Gr(2,5)$}{Gr(2,5)}}

We will first prove the King, Tollu, Toumazet conjecture for $\Gr(2,5)$ by using the idea of Gelfand-Tsetlin polytope.

Let $T$ be a semistandard tableau of shape $(d,d)$ with entries in $\{1,\dots,5\}$
and content $\bd a$. Let $x_i$ be the number of $i$'s in the first row and set
\[
X_j:=x_1+\cdots+x_j,\qquad A_j:=a_1+\cdots+a_j,\qquad X_0=A_0=0 .
\]
The second row then contains $y_i=a_i-x_i$ copies of $i$, with partial sums
$Y_j=A_j-X_j$.
 
\begin{lem}
\label{lem:coords}
    The assignment $T\mapsto(X_1,\dots,X_4)$ is a bijection from the set of such
    tableaux onto the set of integer vectors satisfying
    \begin{equation}\label{eq:cons}
        X_5=d,\qquad 0\le X_j-X_{j-1}\le a_j\ \ (1\le j\le 5),\qquad X_{j-1}+X_j\ \ge\ A_j\ \ (1\le j\le5).
    \end{equation}
\end{lem}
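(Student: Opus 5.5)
We prove the statement by encoding $T$ through its first row and checking that semistandardness of a two-row tableau of shape $(d,d)$ becomes exactly the inequalities \eqref{eq:cons}. Since $T$ has two rows and rows are weakly increasing, $T$ is determined by the multiplicities $x_i$ of each letter $i$ in row $1$. Indeed, row $1$ is $1^{x_1}2^{x_2}\cdots5^{x_5}$ and row $2$ is $1^{y_1}\cdots5^{y_5}$ with $y_i=a_i-x_i$. Equivalently, $T$ is determined by the partial sums $X_1,\dots,X_4$, because $X_5$ is forced to equal $d$ by the row length and $X_0=0$. This gives injectivity of $T\mapsto(X_1,\dots,X_4)$.

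Next we translate each requirement on $T$. The shape and content conditions read $x_i\ge0$, $y_i\ge0$, $\sum_i x_i=d$, and $\sum_i y_i=d$; the last holds automatically because $\sum_i a_i=2d$. These are exactly $X_5=d$ and $0\le X_j-X_{j-1}\le a_j$.

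It remains to show that column strictness is equivalent to $X_{j-1}+X_j\ge A_j$ for all $j$. By Lemma~\ref{lem:shapeofletters}, the cells with entries $\le j$ form a partition $(X_j,Y_j)$, so we need $Y_j\le X_j$; this is the standard Gelfand--Tsetlin reformulation. The interlacing of Lemma~\ref{lem:interlace} for the horizontal strip of letter $j$ gives $X_j\ge X_{j-1}\ge \ldots$; the nontrivial condition is $Y_j\le X_{j-1}$, i.e.\ $A_j-X_j\le X_{j-1}$. The other interlacing conditions, $X_j\ge X_{j-1}$ and $Y_j\ge Y_{j-1}$, are already the bounds $0\le x_j\le a_j$.

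To justify column strictness directly, note that a column is strict exactly when, for each $j$, every $j$ in row $2$ sits below an entry $<j$ in row $1$. The $j$'s in row $2$ occupy positions $Y_{j-1}+1,\dots,Y_j$, and entries $<j$ in row $1$ occupy positions $1,\dots,X_{j-1}$. So the condition is $Y_j\le X_{j-1}$ whenever $y_j>0$. When $y_j=0$, the inequality $Y_j=Y_{j-1}\le X_{j-2}\le X_{j-1}$ follows inductively, so we may impose it for all $j$.

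Conversely, given integers satisfying \eqref{eq:cons}, we define the rows by these multiplicities and the same equivalences give a semistandard tableau of content $\bd a$. This proves surjectivity.

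The only delicate point is this column-strictness equivalence, in particular the case $y_j=0$ together with the boundary index $j=1$. For $j=1$ the condition says $X_1\ge a_1$, forcing all $1$'s into row $1$, which is correct.
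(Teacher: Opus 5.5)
Your proof is correct and follows essentially the same route as the paper: you encode $T$ by its row-$1$ multiplicities, read off $X_5=d$ and $0\le X_j-X_{j-1}\le a_j$ from the shape and content, and reduce column strictness to $Y_j\le X_{j-1}$ by comparing positions in the two weakly increasing rows. The only difference is cosmetic: the paper looks at the columns whose row-$2$ entry is $\le j$ (namely $p=1,\dots,Y_j$), which gives $Y_j\le X_{j-1}$ for every $j$ at once and avoids your separate inductive treatment of the case $y_j=0$.
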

 
\begin{proof}
    A tableau of shape $(d,d)$ is determined by its two rows, and each row is determined
    by its multiset of entries since rows are weakly increasing. So $T$ is determined by
    $(x_1,\dots,x_5)$, equivalently by $(X_1,\dots,X_5)$ with $X_5=d$; the conditions
    $0\le x_j\le a_j$ are the middle inequalities in \eqref{eq:cons}.
     
    Now we will check the column strictness condition. The entry of row $1$ in column $p$ of the tableau $T$ is $\min\{j:X_j\ge p\}$ and the entry of row $2$
    in column $p$ is $\min\{j:Y_j\ge p\}$. Column strictness means that the row-$2$ entry exceeds the row-$1$ entry in every column. For fixed $j$, the columns whose row-$2$ entry is $\le j$ are exactly
    $p=1,\dots,Y_j$ and for each of these we need the row-$1$ entry to be $\le j-1$, i.e.,
    $p\le X_{j-1}$. Hence column strictness is equivalent to $Y_j\le X_{j-1}$ for all $j$,
    which is $A_j-X_j\le X_{j-1}$. This is the last family of inequalities in \eqref{eq:cons}. Conversely, these inequalities give column strictness.
\end{proof}

\begin{remark}
    Notice that two of the four coordinates are fixed. Indeed, taking $j=1$ in \eqref{eq:cons} gives $X_1\ge a_1$, while $X_1=x_1\le a_1$; so $X_1=a_1$. Taking $j=5$ gives $X_4\ge A_5-X_5=2d-d=d$, while $X_4\le X_5=d$; so $X_4=d$. Since $X_5=d$, we conclude that $x_5=0$, i.e.\ no $5$'s
    occur in the first row. Therefore, there are only two free entries
    \[
        u:=X_2,\; v:=X_3 .
    \]
\end{remark}

We use the above inequalities to define a polytope whose number of lattice points gives the dimension of the weight spaces $\dim_{\mathbb{K}}\SI^{\,\Gr(2,5)}_{(d;\bd a)}$. Substituting $X_1=a_1$, $X_4=X_5=d$ into \eqref{eq:cons} and writing $u=X_2$,
$v=X_3$ turns the above inequalities into exactly (C1)--(C9) below.
 
\begin{defn}
\label{def:polygon}
    For a content $\bd a$ with $\sum a_i=2d$ let $\mathcal P_{\bd a}\subset\mathbb{R}^2$ be the
    polyhedron cut out by
    \[
        \begin{array}{llll}
            \text{(C1)}\ u\ge a_1, &\text{(C2)}\ u\ge a_2, &\text{(C3)}\ u\le a_1+a_2, &
            \text{(C4)}\ u+v\ge A_3,\\[1mm]
            \text{(C5)}\ v\ge u, &\text{(C6)}\ v\le u+a_3, &\text{(C7)}\ v\ge d-a_4, &
            \text{(C8)}\ v\ge d-a_5,\\[1mm]
            \text{(C9)}\ v\le d. & & &
        \end{array}
    \]
\end{defn}
 
\begin{theorem}
\label{thm:integral}
    For every content $\bd a$ of $(2,5,d)$, $\mathcal P_{\bd a}$ is a lattice polytope.
\end{theorem}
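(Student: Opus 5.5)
The plan is to use the standard description of vertices: $\mathcal P_{\bd a}$ is a polyhedron in $\mathbb R^2$, so each vertex is a feasible point where two linearly independent constraints among (C1)--(C9) hold with equality. It therefore suffices to show two things: $\mathcal P_{\bd a}$ is bounded, and every feasible point obtained by intersecting two independent boundary lines has integer coordinates. Boundedness is immediate. (C1) and (C3) give $a_1\le u\le a_1+a_2$, and (C7) and (C9) give $d-a_4\le v\le d$. If $\mathcal P_{\bd a}$ is empty there is nothing to prove, although by the Remark before Definition~\ref{def:polygon} it contains the lattice points coming from tableaux whenever $\Kos{(d,d)}{\bd a}\neq0$.

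Next I sort the constraints by their normal vectors. Up to sign these are $(1,0)$ for (C1)--(C3), $(0,1)$ for (C7)--(C9), $(1,-1)$ for (C5) and (C6), and $(1,1)$ for (C4). All right-hand sides ($a_1$, $a_2$, $a_1+a_2$, $A_3$, $0$, $a_3$, $d-a_4$, $d-a_5$, $d$) are integers. Any two independent normals from this list have determinant $\pm1$, except the pair $(1,-1),(1,1)$, which has determinant $\pm2$. So every intersection of two boundary lines is integral except the intersections of (C4) with (C5) and of (C4) with (C6). These two cases are the only real work, and I handle them directly.

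For (C4)$\cap$(C6), solving $v=u+a_3$ and $u+v=A_3$ gives $u=(a_1+a_2)/2$. Feasibility needs $u\ge\max(a_1,a_2)$ by (C1) and (C2), and $\max(a_1,a_2)\ge(a_1+a_2)/2$ with equality only if $a_1=a_2$. So the point is either infeasible or has $u=a_1$ and $v=a_1+a_3$, both integers. For (C4)$\cap$(C5), the point is $u=v=A_3/2$. Here I use $\sum a_i=2d$, which gives $(d-a_4)+(d-a_5)=A_3$. Then (C7) and (C8) require $A_3/2\ge\max(d-a_4,d-a_5)\ge A_3/2$, so feasibility forces $d-a_4=d-a_5=A_3/2$, which is again an integer.

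Hence every vertex of $\mathcal P_{\bd a}$ is a lattice point, and $\mathcal P_{\bd a}$ is a lattice polytope. I expect the main obstacle to be exactly these two determinant-$2$ intersections. The key observation is that on each of the lines (C5) and (C6), the constraint (C4) is implied by the pair (C7)+(C8), respectively (C1)+(C2), with equality only in the integral case.
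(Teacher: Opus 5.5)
Your proposal is correct and matches the paper's proof step for step. You both treat vertices as intersections of two independent tight constraints, note that every pair of normals is unimodular except (C4)--(C5) and (C4)--(C6), and show that feasibility forces $a_1=a_2$, respectively $d-a_4=d-a_5$, in those two cases; your explicit boundedness check via (C1), (C3), (C7), (C9) is a small addition the paper leaves implicit.
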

 
\begin{proof}
    Every extreme point $p$ of $\mathcal P_{\bd a}$ is the unique solution of a pair of
    constraints from (C1)--(C9) whose normal vectors are linearly independent.
     
    The normal vectors occurring in Definition~\ref{def:polygon} are
    \begin{gather*}
        (1,0)\ \ \text{(C1,C2)},\qquad(-1,0)\ \ \text{(C3)},\qquad(1,1)\ \ \text{(C4)},\qquad(-1,1)\ \ \text{(C5)},\\[1mm]
        (1,-1)\ \ \text{(C6)},\qquad(0,1)\ \ \text{(C7,C8)},\qquad(0,-1)\ \ \text{(C9)},
    \end{gather*}
    and every right-hand side is an integer. If a pair of independent normals has determinant $\pm1$, then by Cramer's rule we get an integral solution. Inspecting all pairs, we get determinant of absolute value $2$ only for the following two pairs
    \[
        \{(1,1),(-1,1)\}=\{\text{C4},\text{C5}\}
        \qquad\text{and}\qquad
        \{(1,1),(1,-1)\}=\{\text{C4},\text{C6}\}
    \]
    All other pairs hav determinant $\pm1$ or are parallel.
     
    Consider the pair {\rm(C4),(C6)}. Solving $u+v=A_3$ and $v-u=a_3$ gives
    \[
        u=\tfrac{A_3-a_3}{2}=\tfrac{a_1+a_2}{2},\qquad v=\tfrac{A_3+a_3}{2}.
    \]
    If this point lies in $\mathcal P_{\bd a}$ then (C1) gives $\tfrac{a_1+a_2}{2}\ge a_1$, i.e., $a_2\ge a_1$, and (C2) gives $\tfrac{a_1+a_2}{2}\ge a_2$, i.e., $a_1\ge a_2$. Hence $a_1=a_2$. Therefore, $u=a_1\in\ZZ$ and $v=a_1+a_3\in\ZZ$.
     
    Now consider the pair {\rm(C4),(C5)}. Solving $u+v=A_3$ and $v=u$ gives $u=v=\tfrac{A_3}{2}$. Since $2d=A_3+a_4+a_5$ we have $d-a_4=\tfrac{A_3+a_5-a_4}{2}$ and $d-a_5=\tfrac{A_3+a_4-a_5}{2}$. If the point lies in $\mathcal P_{\bd a}$ then (C7) gives $\tfrac{A_3}{2}\ge\tfrac{A_3+a_5-a_4}{2}$,
    i.e., $a_4\ge a_5$, and (C8) gives $a_5\ge a_4$. Hence $a_4=a_5$. So $A_3=2d-2a_4$ is even and $u=v=d-a_4\in\ZZ$.
    
    In both cases the point is integral whenever it belongs to $\mathcal P_{\bd a}$, or otherwise it is not a vertex. Hence all vertices are integral.
\end{proof}

\begin{cor}
\label{cor:KTTforGr(2,5)}
    Let $\bd a$ be a content for $(2,5,d)$ with $d\geq 1$. If $\dim_{\mathbb{K}}\SI^{\,\Gr(2,5)}_{(d;\bd a)}=3$, then $\dim_{\mathbb{K}}\SI^{\,\Gr(2,5)}_{(Md;M\bd a)}=\binom{M+2}{2}$ or $2M+1$, for every positive integer $M$.
\end{cor}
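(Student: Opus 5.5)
We will identify the dimensions with lattice point counts of the polygon $\mathcal P_{\bd a}$ and use Ehrhart theory in dimension two. By Lemma~\ref{lem:coords} and Definition~\ref{def:polygon}, $\dim_{\mathbb{K}}\SI^{\,\Gr(2,5)}_{(d;\bd a)}=\#(\mathcal P_{\bd a}\cap\ZZ^2)$. All right-hand sides of (C1)--(C9) are linear in $(d,\bd a)$, so $\mathcal P_{M\bd a}=M\mathcal P_{\bd a}$. Hence $\dim_{\mathbb{K}}\SI^{\,\Gr(2,5)}_{(Md;M\bd a)}=\#(M\mathcal P_{\bd a}\cap\ZZ^2)$, the Ehrhart function of $\mathcal P_{\bd a}$. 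By Theorem~\ref{thm:integral}, $\mathcal P_{\bd a}$ is a lattice polytope, so this function is a polynomial $L(M)$ of degree $\dim\mathcal P_{\bd a}\le 2$ for all $M\ge0$, with $L(1)=3$.

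\textbf{Case $\dim\mathcal P_{\bd a}\le1$.} If $\mathcal P_{\bd a}$ is a point, then $L\equiv1$, which is impossible. So $\mathcal P_{\bd a}$ is a lattice segment, with $L(M)=\ell M+1$, where $\ell$ is the lattice length. Then $L(1)=3$ forces $\ell=2$, giving $L(M)=2M+1$.

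\textbf{Case $\dim\mathcal P_{\bd a}=2$.} Let $P=\mathcal P_{\bd a}$. By Pick's theorem, $L(M)=\mathrm{Area}(P)M^2+\tfrac{b}{2}M+1$, where $b$ is the number of boundary lattice points of $P$. A two-dimensional lattice polygon has at least three vertices, all lattice points. With exactly $3$ lattice points in total, $P$ must be a lattice triangle with no other lattice points, i.e.\ a unimodular triangle: $b=3$ and, by Pick, area $=i+b/2-1=1/2$. Therefore $L(M)=\tfrac12M^2+\tfrac32M+1=\binom{M+2}{2}$.

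The main obstacle is the identification $\mathcal P_{M\bd a}=M\mathcal P_{\bd a}$ together with the polynomiality of $L$ holding for all $M\ge1$, not merely for large $M$. The first follows because every constraint in Definition~\ref{def:polygon} is homogeneous of degree one in $(d,\bd a)$; we should also confirm that Lemma~\ref{lem:coords} applies verbatim to the content $M\bd a$ for $(2,5,Md)$. The second is Ehrhart's theorem for lattice polytopes, which holds for all $M\ge0$, combined with Pick's formula. The integrality of the vertices was the delicate input, and Theorem~\ref{thm:integral} already supplies it.
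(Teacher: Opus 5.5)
Your proposal is correct and follows the same route as the paper, which observes that $\mathcal P_{\bd a}$ is a lattice polytope (Theorem~\ref{thm:integral}) with exactly three lattice points, hence a lattice segment or a unimodular triangle. You supply the details the paper leaves implicit: the homogeneity of (C1)--(C9) giving $\mathcal P_{M\bd a}=M\mathcal P_{\bd a}$, and Ehrhart polynomiality together with Pick's formula to read off $2M+1$ and $\binom{M+2}{2}$. All of these details check out.
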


\begin{proof}
    Since $\dim_{\mathbb{K}}\SI^{\,\Gr(2,5)}_{(d;\bd a)}=3$, the lattice polytope $\mathcal{P}_{\bd a}$ is either a line segment or a triangle with $3$ lattice points. The result follows.
\end{proof}

\subsection{Grassmannian \texorpdfstring{$\Gr(k,n)$}{Gr(k,n)}}

Now we will prove the King, Tollu, Toumazet conjecture for all Grassmannians $\Gr(k,n)$.

\begin{theorem}
\label{thm:KTT}
    Let $k\ge2$, $n\ge k+2, d\geq 1$, and $\bd a$ be a content for $(k,n,d)$. If $\dim_{\mathbb{K}}\SI^{\,\Gr(k,n)}_{(d;\bd a)}=3$, then $\dim_{\mathbb{K}}\SI^{\,\Gr(k,n)}_{(Md;M\bd a)}=\binom{M+2}{2}$ or $2M+1$, for every positive integer $M$.
\end{theorem}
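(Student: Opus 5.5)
We argue by induction on $n$ (with $k$ varying), reducing every content to a nondegenerate content on one of the three exceptional Grassmannians. Fix $(k,n,d,\bd a)$ with $\Kos{(d^k)}{\bd a}=3$ and $n\ge k+2$.

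\textbf{Degenerate contents.} Suppose first that $\bd a$ is degenerate, so some $a_i=0$ or some $a_i=d$. If $a_i=0$, Lemma~\ref{lem:T0} deletes the letter $i$. This identifies $\SI^{\Gr(k,n)}_{(Md;M\bd a)}$ with $\SI^{\Gr(k,n-1)}_{(Md;M\bd a')}$ for every $M$, because $Ma_i=0$ as well. If $a_i=d$, Lemma~\ref{lem:T3} identifies the weight spaces with those of $\Gr(k-1,n-1)$ for every $M$, because $Ma_i=Md$.

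In either case the dimension $3$ and the whole stretching function are preserved. The new pair $(k',n')$ still satisfies $k'\ge2$ and $n'\ge k'+2$. Otherwise Remark~\ref{rem:trivialcase} would give dimension $\le1$, contradicting $3$. Note that $(k-1,n-1)$ keeps $n-k$ fixed. If $k'=1$, the dimension would be $1$, again a contradiction. So we may iterate until the content is nondegenerate. The process terminates because $n$ strictly decreases.

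\textbf{The case $d=1$.} Every content is then $0/1$-valued, hence degenerate, so the reduction above applies. After reducing, contents with $d=1$ have all entries equal to $1$ and $n=k$. That case has dimension $1$, so it never arises. We may therefore assume $d\ge2$ and $\bd a$ nondegenerate.

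\textbf{Nondegenerate contents.} Apply Theorem~\ref{thm:star}. If $(k,n)\notin E$, the bound $\Kos{(d^k)}{\bd a}\ge4$ contradicts the hypothesis, so $(k,n)\in E$.
\begin{itemize}
\item For $(2,5)$, Corollary~\ref{cor:KTTforGr(2,5)} gives the claim directly.
\item For $(3,5)$, use Lemma~\ref{lem:T2}, which holds for every $M$ since $(M\bd a)^\vee=M\bd a^\vee$. This transfers to a content for $(2,5,d)$ with the same stretching function, and Corollary~\ref{cor:KTTforGr(2,5)} applies.
\end{itemize}

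\textbf{Main obstacle: $\Gr(2,4)$.} Here I would use the fusion rule, Proposition~\ref{prop:fusion}. Sort $\bd a$ decreasingly. Then $F(a_1,a_2,a_3,a_4)$ counts the integers $j$ satisfying:
\begin{itemize}
\item $j\equiv a_3+a_4\pmod 2$,
\item $j$ lies in $[\,|a_3-a_4|,\,a_3+a_4\,]\cap[\,|a_1-a_2|,\,a_1+a_2\,]$.
\end{itemize}
The parity classes agree because $\sum a_i$ is even. The count is therefore $\tfrac12(\min(a_1+a_2,a_3+a_4)-\max(a_1-a_2,a_3-a_4))+1$, which is an affine function $L/2+1$ of the linear-in-$\bd a$ quantity $L\ge0$.

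Scaling by $M$ multiplies $L$ by $M$, giving $F(M\bd a)=ML/2+1$. Thus $F=3$ forces $L=4$, and hence $F(M\bd a)=2M+1$.

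Alternatively, one could mimic the $\Gr(2,5)$ argument: the Gelfand--Tsetlin polytope is a lattice segment, whose Ehrhart function is $2M+1$ when it contains $3$ lattice points. This $\Gr(2,4)$ computation is the only genuinely new step. The remainder is bookkeeping to check that every reduction commutes with scaling by $M$.
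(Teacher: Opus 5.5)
Your proof is correct. It reaches the same reduction as the paper by a more elementary route, and it differs from the paper in two places.

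\textbf{The reduction to a nondegenerate content.} The paper writes $\sigma=e\sigma_0$ and takes the $\sigma_0$-stable decomposition $\beta=\alpha_1\dotplus\cdots\dotplus\alpha_r$. It then applies \cite[Theorem 3.20]{CombQuivDerkWey} to factor $\dim\SI(Q,\beta)_{M\sigma}$ as a product over the $\alpha_i$. Finally it uses that $3$ is prime, together with Fulton's property, to isolate a unique factor of dimension $3$; that factor is nondegenerate by Proposition~\ref{prop:stability}.

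Your iterated use of Lemmas~\ref{lem:T0} and~\ref{lem:T3} on $M\bd a$ (valid because $Ma_i\in\{0,Md\}$) is the combinatorial shadow of exactly this decomposition. Since nondegenerate is equivalent to $\sigma$-stable, the stable summands are:
\begin{itemize}
\item the real Schur roots $(0,e_i)$ for $a_i=0$;
\item the real Schur roots $(1,e_i)$ for $a_i=d$;
\item one block carrying the nondegenerate core.
\end{itemize}
The deleted letters visibly contribute nothing for every $M$. So you need neither the tensor-product theorem, nor the primality argument, nor Fulton's property.

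\textbf{The case $\Gr(2,4)$.} The paper appeals to the isotropic-Schur-root analysis of \cite{UFDPaqPraWeh}. You instead compute $F$ directly via Proposition~\ref{prop:fusion} and Clebsch--Gordan, or equivalently via the one-parameter Gelfand--Tsetlin segment. This is self-contained and makes the value $2M+1$ transparent.

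\textbf{Comparison.} The paper's framing in terms of stable decompositions and Schur roots points toward arbitrary quivers, where no letter-deletion is available. Your version is tied to rectangular Kostka coefficients, but it is shorter and fully checkable.

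\textbf{One small correction.} The quantity $L=\min(a_1+a_2,a_3+a_4)-\max(a_1-a_2,a_3-a_4)$ is not linear in $\bd a$; it is piecewise linear. What you actually use, and what does hold, is positive homogeneity: $L(M\bd a)=M\,L(\bd a)$ for $M\ge1$. This also relies on scaling preserving the sorted order.
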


\begin{proof}
    Let $Q$ be the subspace quiver with $n$ arms and $\beta=(k,1,1,\dots,1)$, so that $\SI(Q,\beta)$ is the homogeneous coordinate ring of $\Gr(k,n)$, and let $\sigma=(-d,\,a_1,\dots,a_n)$ be the weight associated with the content $\bd a$. Since $\dim_{\mathbb{K}}\SI(Q,\beta)_\sigma=3\ne0$ we have $\sigma\in\Sigma(Q,\beta)$, i.e.\ $\beta$ is $\sigma$-semistable. Write
    \[
        \sigma=e\,\sigma_0,\qquad e:=\gcd\bigl(d,a_1,\dots,a_n\bigr),
    \]
    so that $\sigma_0$ is an indivisible weight. Let
    \[
        \beta=c_1\cdot\alpha_1\dotplus c_2\cdot\alpha_2\dotplus\cdots\dotplus c_r\cdot\alpha_r
    \]
    be the $\sigma_0$-stable decomposition of $\beta$, with the $\alpha_i$ distinct. Since $\beta=(k,1,1,\dots,1)$, we conclude that all $c_i=1$ and
    \[
        \beta=\alpha_1\dotplus\cdots\dotplus\alpha_r,\quad\text{where}\quad\alpha_i=(k_i,\mathbf 1_{S_i}),\quad\text{with}\quad
        \bigsqcup_{i}S_i=\{1,\dots,n\}\quad\text{and}\quad \sum_ik_i=k
    \]
    Moreover, notice that $\SI(Q,\alpha_i)$ is the homogeneous coordinate ring of $\Gr(k_i,|S_i|)$. The weight is the restriction of $\sigma$ itself, whose value at the central vertex is still $-d$. Hence, by Proposition~\ref{prop:dimformula} the
    number of rows is $k_i$, the number of letters is $|S_i|$, and the content is the restriction $\bd a|_{S_i}=(a_j)_{j\in S_i}$. Using $\sigma_0(\alpha_i)=0$ we get
    \[
        \sum_{j\in S_i}a_j=k_i\,d 
    \]
    Additionally, $a_j\leq d$. So $\bd a|_{S_i}$ is indeed a content for $(k_i,|S_i|,d)$.

    Applying \cite[Theorem 3.20]{CombQuivDerkWey} to the indivisible weight $\sigma_0$ and using $c_i=1$, we get
    \[
        \SI(Q,\beta)_{m\sigma_0}\;\cong\;\bigotimes_{i=1}^{r}\mathrm{Sym}^{c_i}\bigl(\SI(Q,\alpha_i)_{m\sigma_0}\bigr)
        \;=\;\bigotimes_{i=1}^{r}\SI(Q,\alpha_i)_{m\sigma_0}\qquad(m\ge0).
    \]
    Substituting $m=eM$ gives, for every integer $M\ge1$,
    \begin{equation}\label{eq:prod}
        \dim_{\mathbb{K}}\SI(Q,\beta)_{M\sigma}\;=\;\prod_{i=1}^{r}\dim_{\mathbb{K}}\SI(Q,\alpha_i)_{M\sigma}.
    \end{equation}
        Take $M=1$. Since $3$ is prime, we get that there is exactly one index $i$ with $\dim_{\mathbb{K}}\SI(Q,\alpha_{i})_\sigma=3$, all other factors having dimension $1$, and by Fulton's property \cite[Appendix]{CombQuivDerkWey} these factors stay $1$-dimensional on scaling by $M$. Write $\alpha_{i}=(k',\mathbf 1_{S})$ with $S\coloneqq S_{i}$ and $n'\coloneqq\lvert S\rvert$. Since $\dim_{\mathbb{K}}\SI(Q,\alpha_{i})_\sigma=3>1$, we conclude that $k'\geq 2$ and $n'\geq k'+2$ by Remark \ref{rem:trivialcase}. Additionally, since $\alpha_i$ is $\sigma_0$-stable, and hence $\sigma$-stable, by using Proposition~\ref{prop:stability}(b)(ii) we get
    \[
        1\le a_j\le d-1\qquad\text{for all }j\in S ,
    \]
    i.e.\ $\bd a|_{S}$ is a nondegenerate content for $(k',n',d)$.

    We have $\dim_{\mathbb{K}}\SI(Q,\alpha_{i})_\sigma=3<4$. Therefore, by using Theorem \ref{thm:star} we conclude that
    \[
        (k',n')\in\bigl\{(2,4),\,(2,5),\,(3,5)\bigr\}.
    \]
    The result is true for $\Gr(2,5)$ by Corollary~\ref{cor:KTTforGr(2,5)}, and for $\Gr(3,5)$ via duality of Grassmannians $\Gr(2,5)$ and $\Gr(3,5)$. In this case, we get the dimension of the scaled weight space as $\binom{M+2}{2}$. If $(k',n')=(2,4)$, then the dimension vector $\alpha_i$ is an isotropic Schur root. Hence, as a consequence of \cite[Section 3.2 \& Proposition 3.2.6]{UFDPaqPraWeh}, we conclude that $\sigma=2\chi+\mathrm{rem}(\sigma)$, and gives the result for $\Gr(2,4)$. In this case, we get the dimension of the scaled weight space as $2M+1$.
    
    Hence, for every integer $M\geq 1$ we get that
    \[
        \dim_{\mathbb{K}}\SI^{\,\Gr(k,n)}_{(Md;M\bd a)}\;=\;\dim_{\mathbb{K}}\SI(Q,\beta)_{M\sigma}\;=\;\binom{M+2}{2}\quad \text{or}\quad 2M+1
    \]
\end{proof}

\begin{remark}
    We believe that the methods in this paper can be generalized to prove the conjecture for all Kostka coefficient. This will be dealt with in the upcoming future work.
\end{remark}

Below we give some examples to illustrate the main idea of the proof.

\begin{example}
    Consider the $6$-subspace quiver
    \[
      \begin{tikzcd}[column sep=0.9em, row sep=3.5em]
        1 && 2 && 3 && 4 && 5 && 6 \\
        &&&&& 0
        \arrow[from=1-1,  to=2-6]
        \arrow[from=1-3,  to=2-6]
        \arrow[from=1-5,  to=2-6]
        \arrow[from=1-7,  to=2-6]
        \arrow[from=1-9,  to=2-6]
        \arrow[from=1-11, to=2-6]
      \end{tikzcd}
    \]
    with dimension vector $\beta=(3,1,1,1,1,1,1)$. The semi-invariant ring is isomorphic to the homogeneous coordinate ring of $\Gr(3,6)$. Let $\sigma=(-3,3,2,1,1,1,1)$. Here $d=3$ and $\bd a=(a_1,\dotsc,a_6)=(3,2,1,1,1,1)$. Notice that since $a_1=3=d$, $\beta$ is not $\sigma$-stable. Up to reordering, the $\sigma$-stable decomposition of $\beta$ is
    \[
        \beta=(1,1,0,0,0,0,0)\dotplus(2,0,1,1,1,1,1)
    \]
    The summand $\alpha_1\coloneqq(1,1,0,0,0,0,0)$ is a real Schur root. For $\alpha_2\coloneqq(2,0,1,1,1,1,1)=(2,\mathbf{1}_{S_2})$ where $S_2=\lbrace2,3,4,5,6\rbrace$, we have $\SI(Q,\alpha_2)\cong\mathbb{K}[\Gr(2,5)]$. We see that the content $\bd a\lvert_{S_2}=(2,1,1,1,1)$ is a nondegenerate content for $(k',n',d)=(2,5,3)$. Moreover, it can be checked that
    \[
        \Kos{(3,3)}{(2,1,1,1,1)}=3
    \]
    Hence, $\dim_{\mathbb{K}}\SI(Q,\beta)_{\sigma}=3$. Consequently, we get $\dim_{\mathbb{K}}\SI(Q,\beta)_{M\sigma}=\binom{M+2}{2}$ for every positive integer $M$.
\end{example}

\begin{example}
    Consider the $8$-subspace quiver
    \[
      \begin{tikzcd}[column sep=0.9em, row sep=3.5em]
        1 && 2 && 3 && 4 && 5 && 6 && 7 && 8 \\
        &&&&&&& 0
        \arrow[from=1-1,  to=2-8]
        \arrow[from=1-3,  to=2-8]
        \arrow[from=1-5,  to=2-8]
        \arrow[from=1-7,  to=2-8]
        \arrow[from=1-9,  to=2-8]
        \arrow[from=1-11, to=2-8]
        \arrow[from=1-13, to=2-8]
        \arrow[from=1-15, to=2-8]
      \end{tikzcd}
    \]
    with dimension vector $\beta=(4,1,1,1,1,1,1,1,1)$. The semi-invariant ring is isomorphic to the homogeneous coordinate ring of $\Gr(4,8)$. Let $\sigma=(-4,4,4,2,2,2,2,0,0)$. Here $d=4$ and $\bd a=(a_1,\dotsc,a_8)=(4,4,2,2,2,2,0,0)$. Let $\sigma=2\sigma_0$ with $\sigma_0=(-2,2,2,1,1,1,1,0,0)$, and $d_0\coloneqq d/2=2$. Hence, corresponding to $\sigma_0$, we get a content $\bd a'=(2,2,1,1,1,1,0,0)$ for $(4,8,2)$. Up to reordering, the $\sigma_0$-stable decomposition of $\beta$ is
    \[
        \beta=(1,e_1)\dotplus(1,e_2)\dotplus(2,\mathbf{1}_{\lbrace3,4,5,6\rbrace}) \dotplus(0,e_7)\dotplus(0,e_8)
    \]
    Let $\alpha_3\coloneqq(2,\mathbf{1}_{\lbrace3,4,5,6\rbrace})$. We have $\SI(Q,\alpha_3)\cong\mathbb{K}[\Gr(2,4)]$. The other dimension vectors in the above decomposition are real Schur roots.
    
    We see that $(1,1,1,1)$ is a nondegenerate content for $(k',n',d_0)=(2,4,2)$. In particular, $\bd a|_{\mathbf{1}_{\lbrace3,4,5,6\rbrace}}=2\times(1,1,1,1)=(2,2,2,2)$ is a nondegenerate content for $(k',n',d)=(2,4,4)$. Moreover, it can be checked that
    \[
        \Kos{(2,2)}{(1,1,1,1)}=2\quad\text{and}\quad \Kos{(4,4)}{(2,2,2,2)}=3
    \]
    Hence, $\dim_{\mathbb{K}}\SI(Q,\beta)_{\sigma}=3$. Consequently, we get $\dim_{\mathbb{K}}\SI(Q,\beta)_{M\sigma}=2M+1$ for every positive integer $M$.
\end{example}

\section*{Acknowledgment}
The author would like to thank Prof. Calin Chindris for suggesting this problem, and for many stimulating and fruitful discussions.

\section*{Disclosure of AI use}
Claude was used for editorial and paraphrasing purposes in various parts of the paper. The author takes responsibility for the mathematical content of the paper.

\bibliographystyle{alpha}
\bibliography{references}

\end{document}